\documentclass[10pt,twoside,english,reqno,a4paper]{amsproc}

\usepackage{geometry}
 \geometry{
 a4paper,
 total={170mm,257mm},
 left=20mm,
 right=20mm,
 top=20mm,
 bottom=20mm,
 }


\usepackage{amsmath}
\newlength{\defbaselineskip}
\setlength{\defbaselineskip}{\baselineskip}

\setlength{\parindent}{15pt}

\usepackage{xcolor}


\usepackage{listings,graphicx,varioref,color,bm,stmaryrd}

\usepackage{soul,xcolor}

\setstcolor{red}

\usepackage{calrsfs}

\usepackage{esint}

\usepackage{appendix}

\usepackage{mathrsfs}  
\usepackage{makecell}
\usepackage[mathscr]{euscript}

\usepackage{mathpazo} 
\usepackage[scaled=.95]{helvet} 

\renewcommand{\iint}{{\ds\iint}}

\usepackage{amsthm}
\usepackage{mathpazo} 
\usepackage[scaled=.95]{helvet} 
\usepackage{courier}
\usepackage{amssymb}
\usepackage{amsmath}
\usepackage{amscd}
\usepackage{amsfonts}
\usepackage[T1]{fontenc}
\usepackage{graphics}
\usepackage[english]{babel}
\usepackage[utf8]{inputenc} 
\usepackage[makeroom]{cancel}
\usepackage{pifont}
\usepackage{marvosym}
\usepackage{wasysym}
\usepackage{makecell}
\usepackage{tabu}
\usepackage{multirow}
\usepackage{multicol}




\setcounter{secnumdepth}{4}



\usepackage{enumerate}
\usepackage{graphics}
\usepackage{pgfplots}
\pgfplotsset{width=7cm,compat=newest} 
\usepackage{graphics}
\usepackage{tikz}
\usetikzlibrary{shapes.geometric}
\usepackage{tikz-cd}
\usepackage[labelsep=endash]{caption}
\captionsetup[figure]{labelfont={bf},labelformat={default},labelsep=period,name={Fig.}}

\usepackage{float}


\theoremstyle{plain}
\newtheorem{theorem}{Theorem}[section]
\newtheorem{theo}{Theorem}

\newtheorem{lemma}[theorem]{Lemma}
\newtheorem{corollary}[theorem]{Corollary}

\theoremstyle{definition}
\newtheorem{defin}[theorem]{Definition}

\newtheorem{remark}[theorem]{Remark}
\theoremstyle{remark}

\setcounter{equation}{0}
\renewcommand{\theequation}{\thesection.\arabic{equation}}
\numberwithin{equation}{section}
\long\def\salta#1{\relax}

\definecolor{bor}{cmyk}{0.21,0.93,0.86,0.12}
\definecolor{air}{rgb}{0.178, 0.51, 0.51}
\definecolor{range}{cmyk}{0,0.599,1,0.188}

\usepackage[normalem]{ulem}               

	
\makeatletter

\def\og{\leavevmode\raise.3ex\hbox{$\scriptscriptstyle\langle\!\langle$~}}
\def\fg{\leavevmode\raise.3ex\hbox{~$\!\scriptscriptstyle\,\rangle\!\rangle$}}

\usepackage{hyperref}
\hypersetup{colorlinks=true,linkcolor=red!70!black,citecolor=orange,urlcolor=blue}


\usepackage{xmpmulti}
\usepackage[justification=centering,labelformat=empty]{caption}

\usepackage[pages=some,scale=1,angle=0,opacity=0.7]{background}

\usepackage{graphicx}
\usepackage{graphics}
\usepackage{pgfplots}

\pgfplotsset{width=7cm,compat=newest}

\usepackage{tikz,lmodern}
\tikzstyle{na} = [baseline=-.5ex,remember picture]

\usetikzlibrary{tikzmark,fit,shapes.geometric}
\usepackage{tikz-cd}
\usetikzlibrary{shapes.geometric,calc,shapes,matrix,arrows,fit,decorations.text}

\usepackage{tkz-graph}



\usepackage{caption}
\usepackage{subcaption}

\usepackage{float}

\usepackage[normalem]{ulem}               
\newcommand\redout{\bgroup\markoverwith
{\textcolor{red}{\rule[0.5ex]{2pt}{0.8pt}}}\ULon}

\usetikzlibrary{patterns}
\usepackage{pgfplotstable}
\usetikzlibrary{intersections}

\usetikzlibrary{patterns,hobby}
        \usepackage{pgfplots}
        \pgfplotsset{compat=1.6}

\def\ok{\bar{k}}

\def\q{\quad}
\def\qq{\qquad}
\def\gz{{\gamma_0}}
\def\gi{{\gamma_\infty}}

\def\de{\delta}
\def\eps{\varepsilon}

\newcommand{\bc}{\overline{c}}
\newcommand{\uc}{\underline{c}}

\def\D{\Delta}
\def\vp{\varphi}
\def\be{\begin{equation}}
\def\ee{\end{equation}}

\def\al{\alpha}
\def\de{\delta}
\def\vp{\varphi}
\def\lm{\lambda}
\def\si{\sigma}
\def\vare{\varepsilon}

\def\ti{{\theta_\infty}}
\def\tz{\theta_0}
\def\wn{w_n}
\def\un{u_n}
\def\b{\beta}
\newcommand{\N}{\nabla}

\newcommand{\NN}{\mathbb{N}}
\newcommand{\intO}{\int_{\Omega}}
\newcommand{\ds}{\displaystyle}

\DeclareMathOperator{\R}{\mathbb{R}}

\def\om{\omega}
\def\uom{{s_1}}
\def\oom{{s_2}}
\def\cu{{\underline{c}}}
\def\co{\overline{c}}

\newcommand{\pare}[1]{\left( #1 \right)}
\newcommand{\norm}[1]{\left\| #1 \right\|}

\newcommand{\bra}[1]{\left[ #1 \right]}
\renewcommand{\t}[1]{\text{#1}}
\newcommand{\set}[1]{\left\{ #1 \right\}}

\newcommand{\Tk}[1]{T_k\pare{#1}}
\newcommand{\Gk}[1]{G_k\pare{#1}}

\def\XXint#1#2#3{{\setbox0=\hbox{$#1{#2#3}{\int}$}
  \vcenter{\hbox{$#2#3$}}\kern-.5\wd0}}




%


\usepackage{datetime}
\usepackage{fancyhdr}
\fancyhead{}
%
\pagestyle{fancy}

\author[M. Magliocca]{Martina Magliocca}
\author[F. Oliva]{Francescantonio Oliva}

\address[M. Magliocca]{Centre Borelli, ENS Paris-Saclay, 4 Avenue des Sciences, 91190 Gif-sur-Yvette, France 
	\\ \url{mmaglioc@ens-paris-saclay.fr}}

\address[F. Oliva]{Dipartimento di Matematica e Applicazioni, Universit\`a di Napoli Federico II, Via Cintia, Monte S. Angelo, 80126 Napoli, Italy 
\\ \url{francescantonio.oliva@unina.it}}

\keywords{Nonlinear parabolic equations, Singular parabolic equations, Repulsive Gradient} \subjclass[2000]{35J60, 35J61, 35J75, 35R06}

\begin{document}

\title{{ On some parabolic  equations involving superlinear singular gradient terms }}

\begin{abstract}
In this paper we prove existence of   nonnegative solutions to parabolic Cauchy-Dirichlet problems with superlinear gradient terms which are possibly singular. The model equation is
\[
 u_t - \D_pu=g(u)|\N u|^q+h(u)f(t,x)\qq \t{in }(0,T)\times\Omega,
\]
where $\Omega$ is an open bounded subset of $\R^N$ with $N>2$, $0<T<+\infty$, $1<p<N$,  and $q<p$ is superlinear. 
The functions $g,\,h$ are continuous and possibly satisfying $g(0) = +\infty$ and/or $h(0)= +\infty$,
with different rates. Finally, $f$ is nonnegative and it belongs to a suitable Lebesgue space. We investigate the relation among the superlinear threshold of $q$, the regularity of the initial datum and the forcing term, and the decay rates of $g,\,h$ at infinity.

\end{abstract}

\setcounter{tocdepth}{4}
\setcounter{secnumdepth}{4}
\maketitle
\tableofcontents

\section{Introduction}

We are interested in existence of  solutions for   Cauchy-Dirichlet parabolic problems with possibly singular nonlinear first order terms. The model problem we have in mind is  presented below:
\begin{equation}
\begin{cases}
\displaystyle u_t - \D_pu= g(u)|\N u|^q+ h(u)f(t,x) &  \text{in}\, Q_T, \\
\displaystyle u\ge 0 &  \text{in}\, Q_T, \\
 u=0 & \text{on}\ (0,T) \times \partial \Omega,\\
 u(0,x)=u_0(x) & \text{in}\, \Omega,
\label{p}
\end{cases}
\end{equation} 
where $\Delta_p u= \operatorname{div(|\nabla u|^{p-2}\nabla u)}$ with $1<p<N$, $q<p$, $Q_T = (0,T)\times\Omega$ is the parabolic cylinder where $0<T<+\infty$ and $\Omega$ is a bounded open subset of $\mathbb{R}^N$, $N>2$. The nonnegative functions $u_0,\,f$ belong to suitable Lebesgue spaces, while $g(s), h(s)$ are continuous nonnegative functions which are admitted to satisfy $g(0) = +\infty$ and/or $h(0)= +\infty$, finite elsewhere and possibly decaying to zero as $s\to+\infty$. Therefore, problem \eqref{p} is \textit{singular} since we are asking the solution to be zero on $(0,T) \times \partial \Omega$ and at the same time the right-hand tends to explode. Additionally, problem \eqref{p} is \textit{superlinear} since, as we will see, the growth of the first order term requires compatibility conditions among the data.

\medskip

When $g,h=1$ physical motivations for the study of \eqref{p} are related to the approximation, in a viscous sense, of Hamilton-Jacobi equations (see \cite{l2}); when $q=p=2$, they are also known as Kardar–Parisi–Zhang equations and they are connected to the theory of growth and roughening of surfaces (see \cite{kpz}).\\
On the other side, the first time that the singular case $g\equiv 0, h(s)=s^{-1}$ appeared was in \cite{FM}; here the authors fall into the study of problems as in \eqref{p} while observing the temperature (given by the solution $u(t,x)$) of an electrical conductor which occupies a three dimensional region. Here $f(t,x)u^{-1}$ is thought as the rate of generation of heat, where $h(s)=s^{-1}$ is the resistivity of the conductor.

\medskip

From a purely mathematical point of view the literature is wide. Firstly let us underline that we will regularly refer to the "superlinear" character of the first order term which is fairly explained in Section \ref{appendix} below. Here, at first glance, one can think of a power of the gradient that, in some sense, grows more than the principal operator on the left-hand. To give an idea, if we look at the homogeneity of the equation
\[
u_t-\D u=|\N u|^q\q\t{with } q<2,
\]
then we have a superlinear growth in the gradient if $q>1$. This will be translated into the necessity of imposing suitable compatibility conditions among the unbounded data which are discussed into Sections \ref{sec:non} and \ref{appendix}. 
\\
In order to understand our natural superlinear setting, we  focus on the stationary equation
\[
-\D_pu=|\N u|^q+f\q\t{in } \Omega.
\]
Its general version has been deeply investigated in \cite{GMP}, when the parameter $q$ satisfies $p-1<q<p$, and we have a superlinear gradient growth;  an optimal Lebesgue space is identified 
in which $f$ needs to be taken in order to have the existence of a suitable notion of solution. 
\\ 

A similar fact  has been shown for the parabolic case in \cite{BASW}; here the authors prove the existence of solutions to
\[
\begin{cases}
u_t-\Delta u=\lambda |\N u|^q&\t{in } (0,T)\times\mathbb{R}^N,\,\lm\in\R\setminus\set{0},\\
u(0,x)=u_0(x)&\t{in } \mathbb{R}^N,
\end{cases}
\]
through semigroup theory and heat kernel estimates.
Moreover, as already pointed our for the elliptic case, a suitable compatibility condition is needed.
Roughly speaking, solutions are admitted if $u_0$ belongs at least to a suitable Lebesgue space $L^\si(\Omega)$ for a given value $\si=\si(q)$; otherwise nonexistence of solutions is shown.
We underline that techniques employed in \cite{BASW} are strongly related to the linearity of the operator and the results can not immediately be extended to the nonlinear case. We also refer the interested reader to \cite{An} where the author shows existence and nonexistence results for a more general equation in a slightly different framework and with a different technique.
\\

In addition, if we look at the nonsingular superlinear version of \eqref{p}, namely $g,h\equiv 1$, it is clear that we cannot simply assume $q>p-1$.
In \cite{M} (see also \cite{DNFG}) an existence result is proven for rough data both in the superlinear and in the sublinear setting. Here it is observed that the presence of the time derivative strongly influences the relation between the growth rate $q$ and $p$. Indeed, when $p\ne 2$, then the operator $u_t-\D_pu$ is not homogeneous. This fact implies that we have two different superlinear thresholds, one for the case $p>2$ and the other when $p<2$.\\
Moreover, as already pointed out, one has to require initial data which satisfy a well determined compatibility condition.   Furthermore, in this nonhomogeneous case, the forcing term $f$ has to verify a regularity assumption as well. \\
We finally highlight that this issue is independent of the nature of the superlinearity (i.e. zero or first order). We quote, in this sense, the works of \cite{BC,Padv} for parabolic problems with superlinear zero lower order terms,  \cite{SMM} in the stationary setting and with superlinearities of the type $g(u)|\N u|^q$ with a bounded $g$ decaying at infinity.

\medskip

Even the singular case of problem \eqref{p}, namely or $g(0)=+\infty$ and/or $h(0)=+\infty$, has been widely studied. 
\\For the stationary setting, when $g\equiv 0$ and $h(s)=s^{-\gamma}$ ($\gamma>0$), the masterpiece works are \cite{CRT,LM} where the authors prove existence of a classical solution. More recently, in presence of $L^1$ or measure data $f$ and for a more general $h$, we quote \cite{BO,ddo,op0}. Here, the problem is mainly dealt through an approximation scheme. Two features need to be highlighted: firstly, roughly speaking, when the growth in zero is too strong then the solution will have just local finite energy and the classical boundary trace will be lost. Secondly, when passing to the limit, a particular care is clearly given to the zone of degeneracy of the solution, showing  that some positiveness is necessary in order to take the limit.\\    
When $g(0)=+\infty$ and $q=p$, problem \eqref{p} has been already analyzed in \cite{ABLP,gps1,gps2} where the authors
prove existence of a solution to the problem, under different assumptions on the growth condition on $g$ and for $h\equiv 1$. Clearly, 
even in this case, suitable compatibility conditions on $f$ are unavoidable to prove existence of the solution.\\

As one can expect, the literature concerning the evolutive counterpart of the singular problem is more limited. When $g\equiv 0$, $p\ge 2$ and $h(s)=s^{-\gamma}$ ($\gamma>0$), problem \eqref{p} is treated in \cite{dbdc}. Here, the authors prove existence of a distributional solution via an approximation argument and one of the main tools is a suitable application of the Harnack inequality in order to deduce the positivity of the approximating sequence. More recently, in presence of a general $h$ and measure data, existence and uniqueness has been addressed in \cite{op}, under suitable assumptions. 
\\
When $g(0)=+\infty$, $h\equiv 1$ and for some $1<q<p$, problem \eqref{p} has been already analyzed in \cite{DG} for bounded initial data $u_0$ and for $f$ satisfying the Aronson-Serrin curve. Under these assumptions, the authors prove the existence of a bounded solution. Moreover, as their proofs deeply exploit the inequality $|\N u|^q\le c|\N u|^p+c$, then their results can even be applied to problems with natural growth in the gradient, i.e. $q=p$. 
Furthermore, such an application of  Young's inequality implies that one needs suitable exponential test functions in order to approach to this critical growth. For further comments in this sense, we also refer to \cite{DGP,DGS,DGS2} where nonsingular parabolic problems with natural growth are considered. We also highlight that problem \eqref{p} has been studied in \cite{DG2} when $h\equiv1$ and for changing sign  $f$ and $u_0\in L^\infty(\Omega)$.

\medskip

In the current work the aim is twofold. Firstly we surely want to take advantage of the rate $q<p$ in order to get sharp existence results even for unbounded data. We want to understand the superlinear setting which is widely affected by the nonlinearity $g$, namely the rate at which $g$ degenerates at  infinity; as we will see, the faster $g$ is decaying at infinity, the lower is the regularity needed for initial data in order to have an existence result. An analogous phenomenon involves $f$ and $h$; 
the faster $h$ is decaying at infinity, the lower is the regularity needed on $f$ in order to have a solution, possibly requiring just $L^1$-data. 
We also mention that, coherently with the framework widely  described above, for a solution one has to think to a distributional solution having a suitable power  with finite energy, i.e. belonging to $L^p(0,T; W^{1,p}_0(\Omega))$. As we will see, there will be cases in which the behavior at zero of $g$, and $h$ could not permit to deduce this power property for the solution which will be given for its truncation from below. Let us highlight that the superlinear thresholds we consider are natural, as shown in Sections  \ref{sec:non} and \ref{appendix}, and that we recover the setting given in \cite{M} when $g,h\equiv 1$.\\
On the other hand, we want to investigate the superlinear framework for very general nonlinearities $g,h$ which possibly blow up at zero. This gives a significant role to the zone of degeneracy of the solution and it means that a particular attention is necessary in the passage to the limit. Moreover, when the behavior of either $g$ or $h$ at zero is too strong, the solution will attain the Dirichlet datum in a weaker sense and it will create  additional difficulties.

\medskip

The plan of the paper is the following. In Section \ref{sec:ass} we set the problem and state the main result. In Section \ref{secmain} we prove the existence of weak solutions in the mild singular case. In Section \ref{sec:strong} we treat the case where $g$ and/or $h$ are strongly singular at zero. In Section \ref{sec:non} we show that our assumptions are sharp and nonexistence of solutions is possible, at least when we have finite energy solutions. Section \ref{appendix} is devoted to understanding the superlinear threshold and the natural compatibility conditions on the data.

\subsection{Notations and preliminaries}

For a given function $s$ we denote its positive part $s^+=\max(s,0)$. For a fixed $k>0$, we define the truncation function $T_{k}:\R^+\to\R^+$ and the level set function $G_{k}:\R^+\to\R^+$  as follows\\
\begin{tabular}{ccc}
\begin{minipage}{.6\textwidth}
\begin{figure}[H]
\centering
\begin{tikzpicture}
\draw[->] (0,0) -- (3,0) node[anchor=north west] {s};
\draw[->] (0,0) -- (0,2) node[left] {{\color{red!70!black}$T_k(s)$}, {\color{orange}$G_k(s)$}};
\draw[-] (-1,0) -- (0,0);
\draw[-] (0,-1) -- (0,0);
\draw[very thick, red!70!black] (0,0) -- (1,1);
\draw[very thick, red!70!black] (1,1) -- (2,1);

\draw[very thick, dashed, red!70!black] (2,1) -- (3,1);

\draw[very thick, orange] (0,0) -- (1,0);
\draw[very thick, orange] (1,0) -- (2,1);

\draw[very thick, dashed, orange] (2,1) -- (3,2);

\draw[dashed] (1,1) -- (1,0);
 
\draw[dashed] (2,1) -- (2,0);
 
\draw[dashed] (1,1) -- (0,1);
 
\fill (1,0) circle (1.5pt) node[below] {$k$};
\fill (2,0) circle (1.5pt) node[below] {$k+1$};

\fill (0,1) circle (1.5pt) node[left] {$k$};
 
\end{tikzpicture}

\end{figure}
\end{minipage}
&
\begin{minipage}{.3\textwidth}
{\[\!
\begin{split}
T_k(s)&=\min (s,k),\\
G_k(s)&=(s-k)^+.
\end{split}
\]}
\end{minipage}
&
\begin{minipage}{.1\textwidth}
\end{minipage}
\end{tabular}
\\

\medskip
Note that  the following decomposition holds
\[
s=\Tk{s}+\Gk{s}.
\]

\noindent We denote by $\langle \cdot,\cdot\rangle$, in a standard way, the duality product between $W^{-1,p'}(\Omega)$ and $W^{1,p}_0(\Omega)$.
We recall that the Sobolev embedding exponent is defined as $ p^*=\frac{Np}{N-p}$, and the conjugate of $p$ is $p'=\frac{p}{p-1}$. 
We explicitly remark that, if not otherwise specified, we will denote by $c,M$ several positive constants whose values may change from line to line and, sometimes, on the same line. These constants will only depend on the data and parameters but they will never depend on the indices of the sequences we will introduce.\\

We will often make use of the following well known result.

\begin{theo}[Gagliardo-Nirenberg inequality]\label{teoGN}
Let $\Omega\subset \mathbb{R}^N, \,N\ge2$, be a bounded and open subset and let $T>0$. Then, if
\begin{equation*} 
v\in L^{\infty}(0,T;L^{h}(\Omega))\cap L^{\eta}(0,T;W_0^{1,\eta}(\Omega)),\q 1\le \eta<N \q \t{and}\q 1\le h\le\eta^*,
\end{equation*}
one has
\begin{equation*}
v\in L^a(0,T;L^b(\Omega)),
\end{equation*}
where the couple $(b,a)$ fulfills
\[
h\le b\le \eta^*,\q\eta\le a\le \infty,
\]
and satisfies the relation
\begin{equation}\label{rel}
\frac{Nh}{b}+\frac{ N(\eta-h)+\eta h }{a}=N.
\end{equation}
Moreover, the following inequality holds:
\begin{equation}\label{disGN}
\int_0^T \|v(t)\|_{L^b(\Omega)}^a \le c(N,\eta,h)\|v\|_{L^{\infty}(0,T;L^h(\Omega))}^{a-\eta}\int_0^T\|\N v(t)\|_{L^{\eta}(\Omega)}^{\eta} .
\end{equation}
In particular, having $a=b$ implies that  
\begin{equation*}
v\in L^{\eta\frac{N+h}{N}}(Q_T),
\end{equation*}
and the estimate reads 
\begin{equation}\label{disGN=}
\int_0^T \|v(t)\|_{L^{\eta\frac{N+h}{N}}(\Omega)}^{\eta\frac{N+h}{N}} \le c(N,\eta,h)\|v\|_{L^{\infty}(0,T;L^h(\Omega))}^{\frac{\eta h}{N}}\int_0^T\|\N v(t)\|_{L^{\eta}(\Omega)}^{\eta} .
\end{equation}
\end{theo}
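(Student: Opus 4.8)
The plan is to reduce this parabolic statement to the classical (elliptic) Gagliardo--Nirenberg--Sobolev interpolation, applied slicewise in the time variable, and then integrate in $t$. I would first dispose of the endpoint $a=\infty$: in that case relation \eqref{rel} forces $b=h$, and the assertion is merely the hypothesis $v\in L^\infty(0,T;L^h(\Omega))$. So assume $\eta\le a<\infty$. For a.e.\ $t\in(0,T)$ the slice $v(t)$ belongs to $W^{1,\eta}_0(\Omega)\cap L^h(\Omega)$, and since $h\le b\le\eta^*$ (equivalently $\tfrac1b$ lies between $\tfrac1{\eta^*}$ and $\tfrac1h$), Hölder's inequality in space yields
\[
\norm{v(t)}_{L^b(\Omega)}\le \norm{v(t)}_{L^h(\Omega)}^{1-\lambda}\,\norm{v(t)}_{L^{\eta^*}(\Omega)}^{\lambda},\qquad \frac1b=\frac{1-\lambda}{h}+\frac{\lambda}{\eta^*},\quad \lambda\in[0,1].
\]
On a bounded domain the Poincaré inequality makes $\norm{\N\cdot}_{L^\eta(\Omega)}$ an equivalent norm on $W^{1,\eta}_0(\Omega)$, so the Sobolev embedding $W^{1,\eta}_0(\Omega)\hookrightarrow L^{\eta^*}(\Omega)$ gives $\norm{v(t)}_{L^{\eta^*}(\Omega)}\le c(N,\eta)\,\norm{\N v(t)}_{L^\eta(\Omega)}$ with a purely dimensional constant.

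The crucial step is the choice of the free parameter $\lambda$. Raising the Hölder inequality above to the power $a$ gives
\[
\norm{v(t)}_{L^b(\Omega)}^a\le \norm{v(t)}_{L^h(\Omega)}^{(1-\lambda)a}\,\norm{v(t)}_{L^{\eta^*}(\Omega)}^{\lambda a},
\]
and I would impose $\lambda a=\eta$, i.e.\ $\lambda=\eta/a$, which is admissible precisely because $\eta\le a<\infty$, and which makes $(1-\lambda)a=a-\eta\ge0$. Substituting $\lambda=\eta/a$ into $\frac1b=\frac{1-\lambda}{h}+\frac{\lambda}{\eta^*}$ and using $\frac1{\eta^*}=\frac1\eta-\frac1N$, an elementary manipulation shows that the constraint linking $b$ and $a$ is exactly relation \eqref{rel}. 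Inserting the Sobolev bound (so that $\norm{v(t)}_{L^{\eta^*}(\Omega)}^{\eta}\le c(N,\eta)\norm{\N v(t)}_{L^\eta(\Omega)}^{\eta}$) and estimating $\norm{v(t)}_{L^h(\Omega)}^{a-\eta}\le\norm{v}_{L^\infty(0,T;L^h(\Omega))}^{a-\eta}$, we reach
\[
\norm{v(t)}_{L^b(\Omega)}^a\le c(N,\eta)\,\norm{v}_{L^\infty(0,T;L^h(\Omega))}^{a-\eta}\,\norm{\N v(t)}_{L^{\eta}(\Omega)}^{\eta}\qquad\t{for a.e. }t\in(0,T).
\]

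Integrating this bound over $(0,T)$ yields \eqref{disGN}; since the right-hand side is finite by the two standing hypotheses on $v$, this also proves the membership $v\in L^a(0,T;L^b(\Omega))$. For the final assertion, putting $b=a$ in \eqref{rel} gives $N\eta+\eta h=Na$, hence $a=b=\eta\frac{N+h}{N}$ and $a-\eta=\frac{\eta h}{N}$, so \eqref{disGN} specializes to \eqref{disGN=} and, in particular, $v\in L^{\eta(N+h)/N}(Q_T)$. I do not anticipate a genuine obstacle; the main subtlety, rather than a true obstruction, is bookkeeping: verifying that the admissible ranges $h\le b\le\eta^*$ and $\eta\le a\le\infty$ coincide exactly with $\lambda=\eta/a\in[0,1]$ (and $(1-\lambda)a\ge0$), carrying out the short but error-prone algebra identifying the exponent condition with \eqref{rel}, and noting that, thanks to the zero boundary values, no lower-order correction term is needed in the elliptic interpolation, so that the constant depends only on $N$ and $\eta$ (a fortiori on $N,\eta,h$).
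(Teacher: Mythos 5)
Your proof is correct. The paper does not actually prove Theorem~\ref{teoGN}: it is stated as a ``well known result'' and used as a black box, so there is no paper proof to compare against. Your argument is the standard one for this parabolic Gagliardo--Nirenberg lemma --- slicewise $L^p$-interpolation between the $L^h$ and $L^{\eta^*}$ norms, the Sobolev embedding $W_0^{1,\eta}(\Omega)\hookrightarrow L^{\eta^*}(\Omega)$, the judicious choice $\lambda=\eta/a$ (made admissible precisely by $\eta\le a$), and integration in time --- and your algebra recovering \eqref{rel}, the endpoint $a=\infty$ (where $b=h$ and \eqref{disGN} degenerates to the trivial membership), and the diagonal case $a=b=\eta(N+h)/N$ with $a-\eta=\eta h/N$ all check out.
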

\medskip

\section{Assumptions and main result in the superlinear and mild singular case}
\label{sec:ass}

Let us consider the following Cauchy-Dirichlet parabolic problem 
\begin{equation}
\begin{cases}
\displaystyle u_t - \t{div }a(t,x,u,\N u)=H(t,x,u,\N u)  &  \t{in}\, Q_T, \\
\displaystyle u\ge 0 &  \text{in}\, Q_T, \\
 u=0 & \t{on}\ (0,T) \times \partial \Omega,\\
 u(0,x)=u_0(x) & \t{in}\, \Omega,
\label{pb}
\tag{P}	
\end{cases}
\end{equation} 
where $\Omega$ is a bounded open subset of $\mathbb{R}^N$, $N>2$, $Q_T= (0,T)\times\Omega$ is the parabolic cylinder with $0<T<\infty$.

\medskip

Let $a:(0,T)\times\Omega\times\R\times \mathbb{R}^N\to\mathbb{R}^{N}$ be a Carath\'eodory function such that, for almost every $(t,x)\in Q_T$ and for all $(s,\xi)\in \mathbb{R}\times\mathbb{R}^N$, there exists $\alpha, c>0$ and $\ell\in L^{p'}(Q_T)$ providing: 
\begin{subequations}
\makeatletter
\def\@currentlabel{A}
\makeatother
\label{A}
\renewcommand{\theequation}{A\arabic{equation}}
\begin{align}
\label{A1}
& \alpha|\xi|^p\le a(t,x,s,\xi)\cdot\xi, \\
\label{A2}
&| a(t,x,s,\xi)|\le c\pare{s^{p-1}+|\xi|^{p-1}+\ell(t,x)},\\
\label{A3}
&\pare{a(t,x,s,\xi)-a(t,x,s,\eta)}\cdot(\xi-\eta)>0  \q \t{for all}\q  \xi\ne\eta,
\end{align}
\end{subequations}
where $1<p<N$ is great enough as pointed out below.

The function $H:(0,T)\times \Omega\times [0,+\infty)\times \mathbb{R}^N\to [0,+\infty]$   is a Carathéodory function as well, which satisfies the  following growth assumption: 
\begin{equation}\tag{H}\label{H}
\ds
H(t,x,s,\xi)\le   g(s)|\xi|^q + h(s)f
\end{equation}
for almost everywhere $(t,x)\in Q_T$, for all $s\in[0,+\infty)$, and for all $ \xi\in\mathbb{R}^N$.\\
Concerning $g,h$ we suppose that there exist $0< \uom\le \oom$ and $\cu,\,\co>0$ such that the function $g:[0,+\infty)\to [0,+\infty]$ is a continuous function which is finite away from the origin and such that
\begin{align}\tag{$\t{g}_0$}\label{g1}
&\displaystyle \exists \theta_{0}\ge 0:\q g(s)\le \frac{\uc}{s^{\theta_{0}}} \q \t{if} \ \ s<\uom, \\
\label{g2} \tag{$\t{g}_\infty$}
&\displaystyle \exists \theta_{\infty}\ge 0:\q g(s)\le \frac{\bc}{s^\ti} \q \t{if} \ \ s>\oom.
\end{align}
 In the same spirit $h:[0,+\infty)\to [0,+\infty]$ is a continuous function which is finite away from the origin and such that
\begin{align}\tag{$\t{h}_0$}\label{h1}
&\displaystyle \exists \gz\ge 0 :\q h(s)\le \frac{\uc}{s^\gz} \q \t{if} \ \ s<\uom, \\
\label{h2} \tag{$\t{h}_\infty$}
&\displaystyle \exists \gi\ge 0:\q h(s)\le \frac{\bc}{s^\gi} \q \t{if} \ \ s>\oom.
\end{align}
We deal with superlinear growths in $q$ which are subnatural, i.e. 
\begin{equation*}\label{Q}\tag{$\t{Q}$}
\max\set{
		\frac{p(1+\ti)}{2},\frac{N(p-1+\ti)+p(1+\ti)}{N+2}
	}<q<p.
\end{equation*}
As explained in Section \ref{appendix}, the needed Lebesgue regularity  on the initial datum is given by
\begin{equation}\tag{$\t{ID}$}\label{ID1}
0\le u_0\in L^{\sigma}(\Omega)\q\t{with}\q \si = \frac{N(q-p+1-\ti)}{p-q}.
\end{equation}
It is natural requiring $\si>1$ in \eqref{ID1} which takes $q$ to be in the following range 
\begin{align} \max\left\{\frac{p(1+\theta_{\infty})}{2},p-\frac{N(1-\ti)}{N+1}\right\}<q<p\label{Q1}\tag{$\t{Q}_1$}.
\end{align}
The remaining part of the superlinear range
\begin{align}\tag{$\t{Q}_2$}
& \max\left\{\frac{p(1+\theta_{\infty})}{2},\frac{N(p-1+\theta_{\infty})+p(1+\theta_{\infty})}{N+2}\right\}<q\le p-\frac{N(1-\ti)}{N+1}\q \t{for}\q p>\frac{2N}{N+1}\label{Q2}
\end{align}
can be dealt with $0\le u_0\in L^\si(\Omega)$ for any $\si\in(1,2)$. Here, the bound from below on $p$ is needed in order to have $\frac{p(1+\theta_{\infty})}{2}<p-\frac{N(1-\ti)}{N+1}$.

The nonnegative forcing term $f:(0,T)\times \Omega\to[0,+\infty)$ has to satisfy its own compatibility condition with respect to the superlinearity. These restrictions on the regularity of $f$ can be removed when $\gi$ is large enough.\\
We set $f$ in $ L^{r}(0,T;L^{m}(\Omega))$ where, if $\gi<\si-1$, we require
\begin{align}
\ds\frac{N(p-2)+p\si}{r}+\frac{N\si}{m}\le N(p-1+\gi)+p\si,
\label{F1}\tag{$\t{F}_{m,r}$}
\end{align}
 while if $\gi\ge\si-1$, we assume
\begin{align}
r=m=1. \label{F3}\tag{$\t{F}_1$}
\end{align}
Let us observe that we formally recover the assumption \eqref{F3} by letting  $\gamma_\infty \nearrow \sigma-1$ in \eqref{F1}.

\medskip

\begin{remark}\label{soglie}
Here we briefly comment the intervals \eqref{Q1} and \eqref{Q2}; firstly let us note that the thresholds appearing in these intervals are widely explained in Figures \ref{fig:1}, \ref{fig:2}, \ref{fig:3} and \ref{fig:4} of Section \ref{appendix} below. Here we just recall that the superlinear threshold is given by
	\begin{equation*} 
	\max\set{
		\frac{p(1+\ti)}{2},\frac{N(p-1+\ti)+p(1+\ti)}{N+2}
	}=
	\begin{cases}
	\ds
	\frac{p(1+\ti)}{2}&\t{if } p<2,\\
	\ds
	\frac{N(p-1+\ti)+p(1+\ti)}{N+2}&\t{if } p\ge 2.
	\end{cases}
	\end{equation*}
As already observed, requiring $\si>1$ in \eqref{ID1}, takes to 
\[
q>p-\frac{N(1-\ti)}{N+1},
\]
which is smaller than $\frac{p(1+\ti)}{2}$ if $\frac{2N}{N+\si}<p\le \frac{2N}{N+1}$ (see Figures \ref{fig:3}, \ref{fig:4}). \\
Moreover, we highlight that asking for $\si\ge2$ when \eqref{Q1} is in force, implies that	
\begin{align*}
&p-\frac{N(1-\ti)}{N+2}\le q<p\q \t{when}\q p>\frac{2N}{N+2},\\
&\frac{p(1+\ti)}{2}<q<p\q\t{when}\q \frac{2N}{N+\si}<p\le \frac{2N}{N+2},
\end{align*}
and here we expect finite energy solutions, i.e. solutions belonging to $L^p(0,T; W^{1,p}_0(\Omega))$.\\
We also point out that, when $\si\ge2$, then
$$
0\le\ti\le \frac{N+2}{N}\bra{q-\pare{p-\frac{N}{N+2}}},
$$
while if $1<\sigma <2$ we are asking 
$$
\frac{N+2}{N}\bra{q-\pare{p-\frac{N}{N+2}}}<\ti< \frac{N+1}{N}\bra{q-\pare{p-\frac{N}{N+1}}}.
$$
Note that, if $\ti=0$, then we recover the same $q$-thresholds which appear in the nonsingular case $g\equiv h \equiv 1$ (see \cite[Section $2$]{M}).\\
It is important to underline that, from the previous calculations, we have
\[
\ti<1,
\]
and that also implies 
\[
\si -1>\ti
\]
when the value of $\si$ is the one in \eqref{ID1}. We cannot reason in a similar way for the range \eqref{Q2}, because we will assume initial data in $L^\si(\Omega)$ for all $1<\si<2$. Without loss of generality, we set   $\si-1>\ti$ when \eqref{Q2} is in force.\\

When the value $\si$ in \eqref{ID1} is  greater than two, our results are sharp, as proved in Section \ref{sec:non}. However, we point out that even the case with $1<\si<2$ in \eqref{ID1} will be dealt with natural assumptions on the data which are widely discussed in Section \ref{appendix}.
\end{remark}

In this section we treat the case of a possibly mild singularity, which means the function $H(t,x,s,\xi)$ is admitted to blow up in such a way  the solutions always attain the trace at the boundary of the parabolic cylinder in the classical sense of Sobolev. Mathematically speaking, we are requiring 
\[
0<\tz\le 1\q\t{and}\q 0<\gz\le 1,
\]
in  \eqref{g1} and \eqref{h1}.\\

We now provide the notion of solution for this case.

\begin{defin}\label{defrin1}
	We say that a  function $u\in L^1(Q_T)$ such that $a(t,x,u,\N u) \in L^1(0,T;L^1_{\rm loc}(\Omega))$ is a distributional solution of \eqref{pb} if  
	\begin{subequations}
		\makeatletter
		\def\@currentlabel{DS}
		\makeatother
		\label{DS}
		\renewcommand{\theequation}{DS.\arabic{equation}}
	\begin{equation}\label{sr0}
\Tk{u}\in L^p(0,T; W^{1,p}_0(\Omega))\q\forall k>0,
\end{equation}
		\begin{equation}
	H(t,x,u,\nabla u)\in L^1(0,T;L^1_{\rm loc}(\Omega)),\label{sr1} 
		\end{equation}
		\begin{equation}\label{sr2}
		\begin{array}{c}
		\ds
		-\int_{\Omega} u_0\vp (0) - \iint_{Q_T}u\vp_t + \iint_{Q_T} a(t,x,u,\N u)\cdot \N \vp   
		=\iint_{Q_T}H(t,x,u,\N u)\vp  
		\end{array}
		\end{equation}
	\end{subequations}
	for every $\vp\in C_c^\infty([0,T)\times \Omega)$.
\end{defin}

We state the existence result for this section.

\begin{theorem}\label{teo} 
Assume that $a(t,x,s,\xi)$ satisfies \eqref{A1}, \eqref{A2}, \eqref{A3},  and that $H(t,x,s,\xi)$ satisfies  \eqref{H}, \eqref{h1}, \eqref{h2}, \eqref{g1}, \eqref{g2} where $\theta_0 \le 1$, $\gamma_0\le 1$ and with $q$ as in \eqref{Q}.  In particular $u_0 \in L^\sigma(\Omega)$ and $f\in L^r(0,T;L^m(\Omega))$ are nonnegative and such that:
\begin{enumerate}[i)]
\item if \eqref{Q1} holds, we assume \eqref{ID1}, \eqref{F1} when  $\gi<\si-1$, and \eqref{F3} if $\gi\ge \si-1$;
\item if \eqref{Q2} holds for any $1<\si<2$, we assume \eqref{F1} when  $\gi<\si-1$, and \eqref{F3} if $\gi \ge \si-1$.
\end{enumerate}
Then there exists at least a solution $u \in L^\infty(0,T;L^\sigma(\Omega))$ to \eqref{pb} in the sense of  Definition \ref{defrin1}.\\
In particular:
\begin{enumerate}[--]
\item if the value $\si$ in \eqref{ID1} satisfies $\si\ge2$ in case i), then $u,\,u^\beta\in L^p(0,T; W^{1,p}_0(\Omega))$;
\item if the value $\si$ in \eqref{ID1} satisfies $1<\si<2$ in case i) or case ii) holds, then $u(1+u)^{\beta-1}\in L^p(0,T; W^{1,p}_0(\Omega))$. Furthermore $|\nabla u|^{p-1} \in L^\frac{b}{p-1}(Q_T)$ while 
$u\in L^b(0,T; W^{1,b}_0(\Omega))$ if $p>1+\frac{N(2-\si)}{N+\si}$.
\end{enumerate} 
In every case
\begin{equation*}
\b=\frac{\sigma-2+p}{p}, \ \ \ b=p-\frac{N(2-\si)}{N+\si}. 
\end{equation*}
\end{theorem}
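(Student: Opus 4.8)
The plan is to construct a solution via an approximation scheme, obtain uniform a priori estimates, pass to the limit using compactness, and then separately handle the degeneracy zone $\{u=0\}$ where $g,h$ may blow up. I would first introduce the approximate problems
\[
\begin{cases}
(u_n)_t - \t{div }a(t,x,u_n,\N u_n)=H_n(t,x,u_n,\N u_n) & \t{in } Q_T,\\
u_n=0 & \t{on }(0,T)\times\partial\Omega,\\
u_n(0,x)=u_{0,n}(x) & \t{in }\Omega,
\end{cases}
\]
where $H_n$ is a truncation of $H$ bounded in $n$ and bounded away from the singularity (e.g. $H_n=T_n(H(t,x,u_n,\N u_n))$ composed with a regularization $g_n,h_n$ of $g,h$ near the origin, and $f_n=T_n(f)$), and $u_{0,n}$ is a smooth approximation of $u_0$ bounded in $L^\sigma(\Omega)$. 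Existence of a nonnegative bounded weak $u_n$ for each $n$ follows from classical theory for quasilinear parabolic equations (Leray--Lions plus a fixed point); nonnegativity comes from $H_n\ge 0$ and the comparison/maximum principle. Monotonicity of $H_n$ in $n$ will help, but the essential point is the uniform estimates.

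The core is the a priori estimates, which must reflect the thresholds \eqref{Q}, \eqref{ID1}, \eqref{F1}. I would test the equation with powers of $u_n$, namely $\varphi=(u_n)^{\sigma-1}$ (or $((1+u_n)^{\sigma-1}-1)$ in the low-regularity case $1<\sigma<2$, to avoid issues near zero), integrate on $(0,t)\times\Omega$, and use \eqref{A1} to control $\int|\N u_n|^p u_n^{\sigma-2}$ from below, i.e. $\|\N(u_n^\beta)\|_{L^p}^p$ with $\beta=(\sigma-2+p)/p$. On the right-hand side the gradient term $g(u_n)|\N u_n|^q u_n^{\sigma-1}$ is split according to $\{u_n<s_1\}$, $\{s_1\le u_n\le s_2\}$, $\{u_n>s_2\}$: on the bounded middle zone it is harmless; near zero one uses $\theta_0\le 1$ together with $q<p$ and a Young inequality to absorb it into the energy (here $q<p$ and $\sigma-1\ge 0$ are what make $q<p$ "subnatural"); at infinity one uses $g(s)\le \bar c s^{-\theta_\infty}$ so the term behaves like $|\N u_n|^q u_n^{\sigma-1-\theta_\infty}$, which after Young's inequality in the $|\N u_n|$ variable (exponents $p/q$ and $p/(p-q)$) produces $|\N u_n|^p u_n^{\sigma-2}$ (absorbed) plus a lower-order term $u_n^{\,\rho}$ with $\rho$ determined by $q,\sigma,\theta_\infty$; the constraint $q>\frac{p(1+\theta_\infty)}{2}$ (resp. the $(N+2)$-threshold for $p\ge2$) is exactly what makes $\rho$ subcritical so that it can be reabsorbed via Gagliardo--Nirenberg (Theorem \ref{teoGN}) after an iteration/interpolation. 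The forcing term $h(u_n)f_n u_n^{\sigma-1}$ is treated similarly: near zero $\gamma_0\le1$ and $\sigma\ge1$ keep it integrable; at infinity $h(s)\le\bar c s^{-\gamma_\infty}$ gives $f_n u_n^{\sigma-1-\gamma_\infty}$, and if $\gamma_\infty\ge\sigma-1$ this is $\le f_n\in L^1$ directly (case \eqref{F3}), while if $\gamma_\infty<\sigma-1$ one applies Hölder in space-time with the exponents dual to $(m,r)$ and then Gagliardo--Nirenberg, the condition \eqref{F1} being precisely what closes the estimate. The outcome is: $u_n$ bounded in $L^\infty(0,T;L^\sigma(\Omega))$, $u_n^\beta$ (or $(1+u_n)^\beta-1$) bounded in $L^p(0,T;W^{1,p}_0(\Omega))$, hence by Gagliardo--Nirenberg $u_n$ bounded in $L^p(0,T;W^{1,p}_0(\Omega))$ when $\sigma\ge2$, and $u_n$ bounded in $L^b(0,T;W^{1,b}_0(\Omega))$ with $b=p-\frac{N(2-\sigma)}{N+\sigma}$ when $\sigma<2$ and $b>1$ (i.e. $p>1+\frac{N(2-\sigma)}{N+\sigma}$); also $|\N u_n|^{p-1}$ bounded in $L^{b/(p-1)}(Q_T)$ by interpolating the $L^\infty L^\sigma$ and $L^pW^{1,p}$ bounds on $u_n^\beta$. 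From the equation, $(u_n)_t$ is bounded in $L^1+L^{p'}(0,T;W^{-1,p'})$ plus measures, so Aubin--Simon gives $u_n\to u$ strongly in $L^1(Q_T)$ and a.e.

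Next, passage to the limit. Strong convergence of $\N u_n$ follows from the standard nonlinear monotonicity argument (Landes--Mustonen / Boccardo--Murat type): test with $(T_k(u_n)-T_k(u))\psi$ for suitable cutoffs $\psi$, use \eqref{A3} and the a.e. convergence to get $\N u_n\to\N u$ a.e. (at least on $\{u>0\}$, and on all of $Q_T$ after truncation), hence $a(t,x,u_n,\N u_n)\rightharpoonup a(t,x,u,\N u)$. For the right-hand side, the gradient term: on the degeneracy set $\{u=0\}$ one shows $\N u_n\to 0$ there and $g(u_n)|\N u_n|^q\to 0$ in $L^1$ using the uniform energy bound and equi-integrability (this is the delicate point — one uses that $q<p$, so $g(u_n)|\N u_n|^q\le \eps|\N u_n|^p u_n^{\sigma-2}+C_\eps(\text{l.o.t.})$ with equi-integrable pieces, plus a Vitali argument); on $\{u>0\}$, continuity of $g$ and a.e. convergence plus Vitali (using a local uniform bound) suffice. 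The singular forcing term $h(u_n)f_n$ requires a positivity argument near $\{u=0\}$: one proves, as in the references \cite{BO,ddo,op}, that for every $\omega\Subset\Omega$ there is $c_\omega>0$ with $u_n\ge c_\omega>0$ on $\omega$ (via a comparison with the solution of a problem with strictly positive datum, using $h>0$ on $(0,\infty)$ and a Harnack-type / barrier argument, cf.\ \cite{dbdc}), so $h(u_n)f_n$ is locally dominated and converges in $L^1_{\rm loc}$ by Vitali. Passing to the limit in \eqref{sr2} with test functions $\varphi\in C_c^\infty([0,T)\times\Omega)$ — which vanish near $\partial\Omega$, so the locality is enough — yields a distributional solution in the sense of Definition \ref{defrin1}; the truncation property \eqref{sr0} is inherited from the uniform bound on $\|\N T_k(u_n)\|_{L^p}$. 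Since $\theta_0\le1$ and $\gamma_0\le1$, the energy estimate localizes to a global $L^p(0,T;W^{1,p}_0)$ bound on the relevant power, so the trace is attained in the Sobolev sense and one gets the finer regularity claimed for $u^\beta$, $u(1+u)^{\beta-1}$, and $u\in L^b(0,T;W^{1,b}_0(\Omega))$.

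\textbf{Main obstacle.} The hardest step is the reabsorption of the superlinear gradient contribution $g(u_n)|\N u_n|^q u_n^{\sigma-1}$ at infinity into the energy together with the closure via Gagliardo--Nirenberg: after Young's inequality one is left with a term $\int_0^T\!\int_\Omega u_n^{\,\rho}$ whose exponent $\rho=\rho(q,p,\sigma,\theta_\infty)$ sits exactly at the borderline dictated by \eqref{Q}, and controlling it uniformly in $n$ requires a careful interpolation between $L^\infty(0,T;L^\sigma)$ and $L^p(0,T;W^{1,p}_0)$ (applied to $u_n^\beta$) with the precise exponents, possibly combined with a bootstrap to upgrade the integrability exponent $\sigma$ first; getting the constants to not blow up and the powers to match is the crux. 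The second, more qualitative, obstacle is the strong $L^1$-convergence of $g(u_n)|\N u_n|^q$ across the degeneracy set $\{u=0\}$ and the local positivity $u_n\ge c_\omega$ needed to handle $h(u_n)f$, which must be established by a barrier/comparison argument robust to the nonlinear $p$-Laplacian structure.
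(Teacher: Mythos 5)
Your skeleton (approximation, a priori estimates, Aubin--Simon compactness, Boccardo--Murat gradient convergence, careful treatment of $\{u=0\}$) matches the paper's, but two central mechanisms are missing or wrong, and both are load-bearing.

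First, the a priori estimate does not close the way you describe. After testing with $u_n^{\sigma-1}$ and applying H\"older/GN, the superlinear gradient contribution at infinity produces a factor $\|G_k(u_n)\|_{L^\infty(0,T;L^\sigma)}^{q-p+1-\theta_\infty}$ multiplying the very energy $\iint |\nabla(G_k(u_n)^\beta)|^p$ you are trying to bound --- it is \emph{not} a lower-order term $\iint u_n^\rho$ that a bootstrap/iteration can tame, because the exponent $q-p+1-\theta_\infty$ is positive precisely in the superlinear regime. The paper closes the estimate by a different device: it tests with $G_k(w_n)^{\sigma-1}$ (after an exponential change of variable $w_n=e^{-\eta t}u_n$, which creates a zero-order term absorbing the $\{f\le k^{\gamma_\infty}\}$ part of the source) and exploits that $\|G_k(u_0)\|_{L^\sigma}$ and $\|f\chi_{\{f>k^{\gamma_\infty}\}}\|$ can be made arbitrarily small by taking $k$ large; smallness of $\|G_k(w_n)\|_{L^\infty L^\sigma}$ is then \emph{propagated in time} by a continuity-plus-contradiction argument (the $T^*$ argument in Lemma \ref{sapp}). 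Without this tail-smallness mechanism there is no way to absorb the prefactor, and the estimate does not close. Your proposal is thus incomplete at the crux you yourself identify.

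Second, your treatment of the degenerate set $\{u=0\}$ via local positivity ($u_n\ge c_\omega>0$ on $\omega\Subset\Omega$ by a Harnack/barrier argument) is not what the paper does and is not available in this generality: $u_0$ is only nonnegative, and near $t=0$ on the complement of $\operatorname{supp}u_0$ no uniform lower bound on $u_n$ can hold. The paper avoids positivity entirely. On $\{u_n>\delta\}$ one passes to the limit by dominated convergence; on $\{u_n\le\delta\}$ the paper tests with $V_\delta(u_n)\varphi$ (a decreasing cutoff equal to $1$ below $\delta$ and $0$ above $2\delta$), throws away the negative term $-\frac{\alpha}{\delta}\iint_{\{\delta<u_n<2\delta\}}|\nabla u_n|^p\varphi$, and after two Fatou passages ($n\to\infty$, then $\delta\to 0$) obtains
\begin{equation*}
\int_{Q_T\cap\{u=0\}}H(t,x,u,\nabla u)\,\varphi\le\int_{Q_T\cap\{u=0\}}a(t,x,u,\nabla u)\cdot\nabla\varphi=0,
\end{equation*}
using $a(t,x,0,0)=0$. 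That vanishing-cutoff argument, not positivity, is what makes the passage to the limit work in the singular case. One more minor remark: the gradient a.e.\ convergence in the parabolic setting needs a time-regularization of $T_k(u)$ (the $T_k(u)_\nu$ device from \cite{dpp}); testing naively with $(T_k(u_n)-T_k(u))\psi$ does not work because of the $\partial_t$ pairing, and one must also verify the equi-integrability condition \eqref{condrin}, which in turn relies on Corollary \ref{L1infinito}.
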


Here we resume the setting we work in and the type of results we obtain. We recall that $0\le \ti<1$. 
\begin{center}
\begin{table}[H]
\setlength{\tabcolsep}{8pt}
\renewcommand{\arraystretch}{2.4} 
\begin{tabu}{ c | c | c |c  }  
  $q$ & Assumptions on $u_0$ & Assumptions on $f$ & $\gi$
\\
\hline
\multirow{2}{*}{\eqref{Q1}}
&
\multirow{2}{*}{$u_0\in L^\si(\Omega)$ with $\si$ as in \eqref{ID1}}
&$f\in L^r(0,T;L^m(\Omega))$ with \eqref{F1} 
& $\gi<\si-1 $
\\\cline{3-4}
&
&
$f\in L^1(Q_T)$ (see \eqref{F3})
& $\gi\ge\si-1 $
\\\cline{1-4}
\multirow{2}{*}{\eqref{Q2}}
&
\multirow{2}{*}{$u_0\in L^\si(\Omega)$ for any $1<\si<2$}
&$f\in L^r(0,T;L^m(\Omega))$ with \eqref{F1} 
& $\gi<\si-1 $
\\
\cline{3-4}
& &
$f\in L^1(Q_T)$  (see \eqref{F3})
& $\gi\ge\si-1 $
\end{tabu}
\vspace*{3mm}
\caption{Global solutions when $0\le\tz,\,\gz\le 1$} 
\end{table}
\end{center}

\section{Proof of the main result}
\label{secmain}

We consider an approximation scheme of the type
\begin{equation}\label{eqn}\tag{$P_n$}
\begin{cases}
\displaystyle (\un)_t- \t{div }a(t,x,\un,\N \un) = H_n(t,x,\un, \N \un)\q  &\t{in}\ Q_T,\\
 \un=0  \q &\t{on}\ (0,T)\times \partial\Omega,\\
 \un(0,x)=u_{n,0}(x) \q & \t{in}\  \Omega,
\end{cases}
\end{equation} 
where 
$
H_n(t,x,s,\xi)=T_n(H(t,x,s,\xi))
$,
 $u_{n,0}(x)=T_n(u_0(x))$ and hence $u_{n,0}\to u_0$ strongly in $L^{\si}(\Omega)$.
By \cite{L}, it follows the existence of  a nonnegative  solution $\un$ of \eqref{eqn}  as below  
\begin{equation*}
\un\in L^p(0,T; W^{1,p}_0(\Omega))\cap L^{\infty}(Q_T)
,\,\, (\un)_t \in L^{p'}(0,T;\mathrm{W}^{-1,p'}(\Omega)),
\end{equation*}
and
\begin{equation*}
\begin{split}
\int_0^T\langle(\un)_t,\vp\rangle
+\iint_{Q_T}a(t,x,\un,\N \un)\cdot\nabla \varphi     =\iint_{Q_T}  H_n(t,x,\un,\N \un)\varphi   ,
\end{split}	
\end{equation*}
for every $\varphi\in L^p(0,T; W^{1,p}_0(\Omega))$. \\
In the sequel we will widely use the following change of variable 
$$\wn=e^{-\eta t}\un,\q \eta>0 \t{ to be fixed},$$
which makes a new zero  order term appear and this allows us to deal with forcing terms without smallness size assumptions. Then $\wn$ satisfies 
\begin{equation}\label{changeq}
(\wn)_t + \eta \wn- \t{div}(\tilde a(t,x,\wn,\N \wn)) = \widetilde{H}_n(t,x,\wn,\N \wn) ,\qq\forall\,\,n\in \mathbb{N},
\end{equation}
where the conditions \eqref{A}  still hold for 
\[
\tilde a(t,x,\wn,\N \wn)=e^{-\eta t}a(t,x,e^{\eta t}\wn, e^{\eta t}\N \wn)
\]
with different constants depending on $T$, $\eta$, and \eqref{H} is satisfied with  
\[
\widetilde{H}_n(t,x,\wn,\N\wn)=e^{-\eta t}H_n(t,x,e^{\eta t}\wn,e^{\eta t}\N\wn).
\]
We observe that $\wn$ satisfies the same boundary and initial conditions as $\un$. For sake of clarity, we omit the $\tilde{\cdot}$ in the following computations.

\medskip

We divide the proof of the a priori estimates according to the value of $\sigma$.

\subsection{Finite energy solutions}
We start analyzing the superlinear range for $q$ given by \eqref{Q1}. More particularly we deal with its subrange given by requiring $\sigma\ge 2$, namely 
\begin{align*}
&p-\frac{N(1-\ti)}{N+2}\le q<p\q \t{when}\q p>\frac{2N}{N+2},\\
&\frac{p(1+\ti)}{2}<q<p\q\t{when}\q \frac{2N}{N+\si}<p\le \frac{2N}{N+2}.
\end{align*}
We recall that, in this case,  we look for finite energy solutions. Once again and for the sake of clarity, we recall both Remark \ref{soglie} and Figures \ref{fig:1}, \ref{fig:2}, \ref{fig:3} and \ref{fig:4} in Section \ref{appendix} for a complete account on the appearing thresholds.

\begin{lemma}\label{sapp}
	Assume that $a$ satisfies \eqref{A1}, \eqref{A2}, and that $H$ satisfies  \eqref{H}, \eqref{g2}, \eqref{h2} where $q$ is in \eqref{Q1}. In particular $0\le u_0 \in L^\sigma(\Omega)$ with $\sigma\ge 2$ as in \eqref{ID1}, and $0\le f\in L^r(0,T;L^m(\Omega))$ where the couple $(r,m)$ satisfies \eqref{F1} when  $\gi<\si-1$ and \eqref{F3} if $\gi\ge \si-1$. Let  $\un$ be a solution to \eqref{eqn}. Then there exists  $\ok\in\NN$ such that we have 
		\begin{equation*} 
		\int_{\Omega} \un(t)^{\sigma} +\iint_{Q_t}|\nabla ( G_k(\un)^{\b})|^p    \le M\q \forall k\ge\ok, 
		\end{equation*}
	where
	\begin{equation*} 
	\b=\frac{\sigma-2+p}{p},
	\end{equation*}
	and  $M$ is a positive constant which does not depend on $n$.
\end{lemma}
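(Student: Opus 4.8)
The natural approach is an energy estimate obtained by testing the (regularized, change-of-variable) equation \eqref{changeq} for $w_n$ with a power of $G_k(w_n)$, then undoing the exponential change of variable. Concretely, I would test with $\varphi = G_k(w_n)^{2\beta-1}$, which is an admissible test function since $w_n \in L^p(0,T;W^{1,p}_0(\Omega)) \cap L^\infty(Q_T)$ and $G_k$ vanishes near the origin so no singularity of $g,h$ at $0$ is seen (here only \eqref{g2}, \eqref{h2} are used). The parabolic term gives, after integration in time, a contribution controlling $\int_\Omega G_k(w_n)(t)^{2\beta}$ (up to the initial datum term $\int_\Omega G_k(u_{n,0})^{2\beta}$, which is bounded by $\|u_0\|_{L^\sigma}^\sigma$ once one checks $2\beta \le \sigma$ — indeed $2\beta = \frac{2(\sigma-2+p)}{p}$ and since $q<p$ forces the relevant inequalities, with $\sigma\ge 2$ one gets $2\beta\le\sigma$ precisely in range \eqref{Q1}); the diffusion term together with \eqref{A1} produces $c\iint |\nabla G_k(w_n)^\beta|^p$ after absorbing the algebraic factor $(2\beta-1)$ coming from the chain rule; the new zeroth-order term $\eta\iint w_n G_k(w_n)^{2\beta-1}$ is nonnegative and can be discarded. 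The point of the change of variable is exactly that this $\eta$-term lets us dispense with smallness of the data.

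The right-hand side splits, via \eqref{H}, \eqref{g2}, \eqref{h2}, into a gradient term $\iint g(w_n)|\nabla w_n|^q G_k(w_n)^{2\beta-1} \le \bar c\iint w_n^{-\theta_\infty}|\nabla w_n|^q G_k(w_n)^{2\beta-1}$ (on $\{w_n>k\}$, for $k$ large, we are past $s_2$) and a forcing term $\bar c\iint w_n^{-\gamma_\infty} f\, G_k(w_n)^{2\beta-1}$. For the gradient term I would use Young's inequality with exponents $p/q$ and $p/(p-q)$ to split off $\delta\iint |\nabla G_k(w_n)^\beta|^p$ (absorbed into the left side for small $\delta$) against a term of the form $C\iint G_k(w_n)^{\text{(exponent)}}$ with a Lebesgue exponent that, by the very definition of $\beta$ and of the threshold \eqref{Q1}, turns out to be exactly $p\frac{N+\sigma}{N}$ — the Gagliardo-Nirenberg exponent from \eqref{disGN=} applied to $v=G_k(w_n)^\beta$ with $\eta=p$ and $h=\sigma/\beta$. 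This is the crucial algebraic check: one must verify that the exponent produced by Young's inequality matches the Gagliardo-Nirenberg self-improving exponent, so that \eqref{disGN=} closes the estimate after choosing $k=\bar k$ large enough that the constant multiplying the GN term is $<1$ (this is where the smallness of $\int_{\{w_n>k\}}$-type quantities, hence $k\ge\bar k$, enters, together with $\bar k$ depending only on the data through $\|u_0\|_{L^\sigma}$ via the energy already accumulated at level $k$). For the forcing term, Hölder in space and time against $f\in L^r(0,T;L^m(\Omega))$, using $w_n^{-\gamma_\infty}G_k(w_n)^{2\beta-1}\le G_k(w_n)^{2\beta-1-\gamma_\infty}$ when $\gamma_\infty<\sigma-1$, reduces it again to a power of $G_k(w_n)$ in a space-time Lebesgue norm interpolated by Gagliardo-Nirenberg; assumption \eqref{F1} is precisely the condition on $(r,m)$ making the resulting exponent subcritical (or critical with small constant). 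When $\gamma_\infty\ge\sigma-1$ the exponent $2\beta-1-\gamma_\infty$ is $\le 0$ on $\{w_n>k\}$ hence $w_n^{-\gamma_\infty}G_k(w_n)^{2\beta-1}$ is bounded there, so merely $f\in L^1(Q_T)$ suffices, giving \eqref{F3}.

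Finally I would undo the change of variable: since $w_n=e^{-\eta t}u_n$ with $t\in(0,T)$ bounded, all norms of $w_n$ and $\nabla w_n$ are equivalent to those of $u_n$ and $\nabla u_n$ up to constants depending on $T,\eta$, so the estimate $\int_\Omega G_k(u_n)(t)^{2\beta}+\iint_{Q_t}|\nabla G_k(u_n)^\beta|^p\le M$ follows; adding the trivially bounded contribution from $T_k(u_n)$ (whose $L^\infty(0,T;L^\sigma)$ norm is controlled by $k$ and $|\Omega|$, or better by testing with $T_k(w_n)^{2\beta-1}$ in the same way) upgrades $G_k(u_n)$ to $u_n$ in the first term, yielding $\int_\Omega u_n(t)^\sigma+\iint_{Q_t}|\nabla G_k(u_n)^\beta|^p\le M$ for all $k\ge\bar k$. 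The main obstacle is the bookkeeping of exponents: one must be careful that the $\beta$ in the statement, the threshold defining \eqref{Q1}, the value $\sigma$ in \eqref{ID1}, and the Gagliardo-Nirenberg relation \eqref{rel} are mutually consistent so that Young's inequality on the gradient term lands exactly on the GN critical exponent; any mismatch would leave an unabsorbable term. A secondary delicate point is the choice of $\bar k$: it must be shown to depend only on the data (not on $n$), which is why the estimate is stated for $k\ge\bar k$ rather than all $k>0$ — the "tail smallness" needed to make the GN constant strictly less than $1$ is uniform in $n$ thanks to the a priori $L^\infty(0,T;L^\sigma)$ bound bootstrapped along the way.
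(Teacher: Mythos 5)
Your proposed test function is wrong, and this is a structural error, not a bookkeeping one. You test \eqref{changeq} with $G_k(w_n)^{2\beta-1}$, where $\beta=\frac{\sigma-2+p}{p}$. The diffusion term then produces, via \eqref{A1} and the chain rule,
\begin{equation*}
\alpha(2\beta-1)\iint_{Q_t}G_k(w_n)^{2\beta-2}|\nabla G_k(w_n)|^p,
\end{equation*}
whereas $|\nabla(G_k(w_n)^\beta)|^p=\beta^p G_k(w_n)^{p(\beta-1)}|\nabla G_k(w_n)|^p$. These match only when $2\beta-2=p(\beta-1)$, i.e.\ when $p=2$ or $\beta=1$ (equivalently $\sigma=2$). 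For $p\neq 2$ and $\sigma>2$ your test function gives $\iint|\nabla(G_k(w_n)^\mu)|^p$ with $\mu=1+\tfrac{2(\beta-1)}{p}\neq\beta$, and the parabolic term gives $\int_\Omega G_k(w_n(t))^{2\beta}$ with $2\beta<\sigma$. Neither matches the conclusion of the Lemma, and — more importantly — the whole algebra that makes Young's inequality land exactly on the Gagliardo–Nirenberg critical exponent relies on the pair $\bigl(L^\infty(L^\sigma),\,G_k^\beta\in L^p(W^{1,p}_0)\bigr)$ together with $\sigma$ exactly as in \eqref{ID1}; with the weaker pair $(L^\infty(L^{2\beta}),\,G_k^\mu)$ the exponents do not close. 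The correct choice, as the paper does, is to test with $G_k(w_n)^{\sigma-1}$: this produces $\frac1\sigma\int_\Omega G_k(w_n(t))^\sigma$ from the time derivative and $\alpha\frac{\sigma-1}{\beta^p}\iint|\nabla(G_k(w_n)^\beta)|^p$ from the diffusion, since $p(\beta-1)=\sigma-2$. Your exponent $2\beta-1$ agrees with $\sigma-1$ only at $p=2$, which may explain the slip.

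Two secondary points. First, you say the $\eta$-zeroth-order term is ``nonnegative and can be discarded'' while also saying it is what lets one dispense with smallness of the data — these cannot both hold. In the paper, the term $\eta k\iint G_k(w_n)^{\sigma-1}$ is \emph{kept} and used to absorb the part of the forcing integral on $\{f\le k^{\gamma_\infty}\}$ (after choosing $\eta>\bar c/s_2$), leaving only $f\chi_{\{f>k^{\gamma_\infty}\}}$, whose norm vanishes as $k\to\infty$ and drives the choice of $\bar k$. Second, the phrase ``the a priori $L^\infty(0,T;L^\sigma)$ bound bootstrapped along the way'' glosses over a genuine circularity: that bound is what you are trying to prove. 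The paper breaks the circularity via a continuity argument, defining $T^*=\sup\{\tau:\frac1\sigma\|G_k(w_n(s))\|^\sigma_{L^\sigma}\le\delta_0,\ \forall s\le\tau\}$, proving the estimate on $[0,T^*]$, and then showing $T^*=T$ by contradiction using the strict smallness from the choice of $\delta_0$ and $\bar k$. Your proposal needs to articulate some version of this, or the closure of the estimate is not rigorous.
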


\begin{proof}	
We multiply the equation \eqref{changeq} by the function
	\[
	G_k(\wn)^{\sigma-1}\q\t{with}\q k\ge\max\set{1,\oom},
	\] 
	and we integrate over $Q_t$ with $0<t\le T$.  Recalling \eqref{h2} and \eqref{g2} one gets 
	\begin{equation*} 
	\begin{aligned}
	&\frac{1}{\sigma}\int_{\Omega} G_k(\wn(t))^{\sigma} + \alpha \frac{\sigma-1}{\b^p}\iint_{Q_t}|\nabla ( G_k(\wn)^{\b})|^p   +\eta k\iint_{Q_t}G_k(\wn)^{\sigma-1}  
	\\ 
	&\le \frac{\bc e^{\eta(q-1)T}}{\b^q} \iint_{Q_t} |\nabla (G_k(\wn)^{\b})|^qG_k(\wn)^{\sigma-1-q(\b-1)-\theta_{\infty}}     
	\\
	& + \bc\iint_{\{f\le k^\gi\}} \frac{fG_k(\wn)^{\sigma-1}}{\wn^\gi}   +\bc\iint_{\{f>k^\gi\}} \frac{f G_k(\wn)^{\sigma-1}}{\wn^\gi}
	+\frac{1}{\si}\int_{\Omega}G_k(u_0)^{\sigma}. 
	\end{aligned}
	\end{equation*}
	Requiring $\eta>\bc/s_2$, one gets
	\[
	\bc\iint_{\{f\le k^\gi\}} \frac{fG_k(\wn)^{\sigma-1}}{\wn^\gi} \le\bc \iint_{\{f\le k^\gi\}} G_k(\wn)^{\sigma-1}\le \eta k\iint_{Q_t}G_k(\wn)^{\sigma-1}  ,
	\]
which implies
	$$
	\begin{aligned}
	&\frac{1}{\sigma}\int_{\Omega} G_k(\wn(t))^{\sigma} + \alpha \frac{\sigma-1}{\b^p}\iint_{Q_t}|\nabla ( G_k(\wn)^{\b})|^p   
	\\
	&\le \frac{ \bc e^{\eta(q-1)T}}{\b^q} \iint_{Q_t} |\nabla (G_k(\wn)^{\b})|^q G_k(\wn)^{\sigma-1-q(\b-1)-\theta_{\infty}}  
	+\bc\iint_{Q_t} f\chi_{\set{f>k^\gi}} G_k(\wn)^{\sigma-1}\wn^{-\gi}    
+\frac{1}{\si}\int_{\Omega}G_k(u_0)^{\sigma} \\
&=A+B+\frac{1}{\si}\int_{\Omega}G_k(u_0)^{\sigma}.
	\end{aligned}
	$$

	We first focus on the $A$ term. We apply H\"older's inequality twice  with exponents $\pare{\frac{p}{q},\frac{p}{p-q}}$ and $\pare{\frac{p^*}{p},\frac{N}{p}}$ so that we get 
	\begin{align*}
	A
	&\le \frac{ \bc e^{\eta(q-1)T}}{\b^q}\int_{0}^t\pare{\intO|\nabla ( G_k(\wn)^{\b})|^p}^{\frac{q}{p}}  \left( \intO G_k(\wn)^{p\b+p\frac{q-p+1-\ti}{p-q}} \right)^{\frac{p-q}{p}} \\
	&\le c_0\|G_k(\wn)\|_{L^{\infty}(0,T;L^\sigma(\Omega))}^{q-p+1-\theta_{\infty}}\iint_{Q_t}|\nabla ( G_k(\wn)^{\b})|^p    ,
	\end{align*}
	thanks also to Sobolev's inequality.\\

	As far as the $B$ term is concerned we first observe that, if $\gi\ge \sigma -1$, then 
	$$
	\begin{aligned}
	B \le \frac{\bc}{k^{\gi-\si+1}}\|f\chi_{\set{f>k^\gi}}\|_{L^1(Q_T)}\le \bc \|f\chi_{\set{f>k^\gi}}\|_{L^1(Q_T)}.
	\end{aligned}
	$$
	Hence,  
	\begin{equation}\label{stima1}
	\begin{aligned}
	&	
	\frac{1}{\sigma}\int_{\Omega} G_k(\wn(t))^{\sigma} + \alpha \frac{\sigma-1}{\b^p}\iint_{Q_t}|\nabla ( G_k(\wn)^{\b})|^p   
	\\  
	&\le c_0
	\|G_k(\wn)\|_{L^{\infty}(0,t;L^{\si}(\Omega))}^{q-p+1-\theta_{\infty}}
	\iint_{Q_t}|\nabla (G_k(\wn)^{\b})|^p 
 + \bc\|f\chi_{\set{f>k^\gi}}\|_{L^1(Q_T)} +\frac{1}{\sigma}\int_{\Omega} G_k(u_0)^{\sigma} .
	\end{aligned}
	\end{equation} 
	Otherwise, if $\gi<\sigma-1$ and by H\"older's inequalities with $(m,m'),\,(r,r')$, it yields 
	\begin{align*}
	B &\le \bc\|f\chi_{\set{f>k^\gi}}\|_{L^r(0,T;L^m(\Omega))}\|G_k(\wn)\|_{L^{r'(\si-1-\gi)}(0,T;L^{m'(\si-1-\gi)}(\Omega))}^{\sigma-1-\gi}\\
	&=\bc\|f\chi_{\set{f>k^\gi}}\|_{L^r(0,T;L^m(\Omega))}\|G_k(\wn)^{\b}\|_{L^{r'\frac{\si-1-\gi}{\b}}(0,T;L^{m'\frac{\si-1-\gi}{\b}}(\Omega))}^{\frac{\sigma-1-\gi}{\b}}.
	\end{align*}
	We go further invoking the Gagliardo-Nirenberg regularity result (see Theorem \ref{teoGN}), which states that, if  
	\[
	G_k(\wn)^{\b}\in L^{\infty}(0,T;L^{\frac{\si}{\b}}(\Omega))\cap L^p(0,T;W^{1,p}_0(\Omega)),
	\] 
	then $G_k(\wn)^{\b}\in L^{a}(0,T;L^{b}(\Omega))$ where $(b,a)$ satisfies the relation \eqref{rel}, i.e. 
	\[
	\frac{N\si}{\b b}+\frac{N(p\b -\si)+p\si}{\b a}=N.
	\]
	Moreover, it holds that
	\[
 \|G_k(\wn)^{\b}\|_{L^a(0,t;L^{b}(\Omega))}^a \le c(N,p)\|G_k(\wn)^{\b}\|_{L^{\infty}(0,t;L^{\frac{\si}{\b}}(\Omega))}^{a-p}	\iint_{Q_t}|\nabla (G_k(\wn)^{\b})|^p .
	\]
	We thus impose 
	\begin{equation}\label{ab}
	b\ge m'\frac{\si-1-\gi}{\b}\q\t{and}\q a\ge r'\frac{\si-1-\gi}{\b},
	\end{equation}
	and continue estimating $B$ as
	\begin{align}
	B&\le\, \bc\|f\chi_{\set{f>k^\gi}}\|_{L^r(0,T;L^m(\Omega))}
	\|G_k(\wn)^{\b}\|_{L^{a}(0,t;L^{b}(\Omega))}^{\frac{\si-1-\gi}{\b}}\nonumber
	\\
	&\le c\|f\chi_{\set{f>k^\gi}}\|_{L^r(0,T;L^m(\Omega))}\left(
	\|G_k(\wn)\|_{L^{\infty}(0,t;L^{\si}(\Omega))}^{\b(a-p)}	\iint_{Q_t}|\nabla (G_k(\wn)^{\b})|^p 
	\right)^{\frac{\si-1-\gi}{a\b}}\nonumber
	\\
	&\le c_1\|G_k(\wn)\|_{L^{\infty}(0,t;L^{\si}(\Omega))}^{\b(a-p)}	\iint_{Q_t}|\nabla (G_k(\wn)^{\b})|^p  +c_2\|f\chi_{\set{f>k^\gi}}\|_{L^r(0,T;L^m(\Omega))}^{\frac{a\b}{a\b-(\si-1-\gi)}},\nonumber
	\end{align}
	where the last passage is due to  Young's inequality with indices $\pare{\frac{a\b}{\si-1-\gi},\frac{a\b}{a\b-(\si-1-\gi)}}$.
	\\
	We finally get
	\begin{equation}\label{stima2}
	\begin{aligned}
	&	
	\frac{1}{\sigma}\int_{\Omega} G_k(\wn(t))^{\sigma} + \alpha \frac{\sigma-1}{\b^p}\iint_{Q_t}|\nabla ( G_k(\wn)^{\b})|^p   
	\\  
	&\le c_3\left[
	\|G_k(\wn)\|_{L^{\infty}(0,t;L^{\si}(\Omega))}^{q-p+1-\theta_{\infty}}+
	\|G_k(\wn)\|_{L^{\infty}(0,t;L^{\si}(\Omega))}^{\b(a-p)}
	\right] \iint_{Q_t}|\nabla (G_k(\wn)^{\b})|^p 
	\\ 
	&+c_2\|f\chi_{\set{f>k^\gi}}\|_{L^r(0,T;L^m(\Omega))}^{\frac{a\b}{a\b-(\si-1-\gi)}}
	+\frac{1}{\sigma}\int_{\Omega} G_k(u_0)^{\sigma} .&
	\end{aligned}
	\end{equation}

	We fix a small positive $\delta_0$  such that
	\begin{equation}\label{deo}
\begin{cases}
	\ds  c_0\pare{\si\delta_0}^{\frac{q-p+1-\ti}{\si}}= \al\frac{\si-1}{2\b^p} &\t{if }\gi\ge\si-1\q(\t{see }\eqref{stima1}),\\
	\ds
	2c_3\max\left\{
	\pare{\si\delta_0}^{q-p+1-\ti},
	\pare{\si\delta_0}^{ \b(a-p)}
	\right\}^\frac{1}{\si}= \al\frac{\si-1}{2\b^p} &\t{if }\gi<\si-1\q(\t{see }\eqref{stima2}).
\end{cases}
	\end{equation}
	Now, let $\ok$ be large enough (eventually larger than $\max\set{1,s_2}$) such that
	\begin{equation}\label{fg}
	\begin{cases}
	\ds
	\bc\norm{f\chi_{\set{f>k^\gi}}}_{L^1(Q_T)}+\frac{1}{\si}\|G_k(u_0)\|_{L^{\sigma}(\Omega)}^{\sigma}<\de_0&\t{if }\gi\ge\si-1\q(\t{see }\eqref{stima1}),\\
	\ds  c_2\norm{f\chi_{\set{f>k^\gi}}}_{L^r(0,T;L^m(\Omega))}^{\frac{a\b}{a\b-(\si-1-\gi)}}+\frac{1}{\si}\|G_k(u_0)\|_{L^{\sigma}(\Omega)}^{\sigma}
	<\de_0&\t{if }\gi<\si-1\q(\t{see }\eqref{stima2}),
\end{cases}
	\end{equation}
	for all $k\ge \ok$.\\
	Moreover let
	\begin{equation*}\label{tstar}
	T^*=\sup \set{\tau\in [0,T]: \,\,  \frac{1}{\si}\|G_k(\wn(s))\|_{L^{\sigma}(\Omega)}^{\sigma}\le\delta_0\q\forall s\le \tau } \q\forall k\ge\ok.
	\end{equation*}
	Note that $T^*>0$ since $\{\wn\}_n\subseteq C([0,T];L^{\sigma}(\Omega))$.\\
	Thus, taking into account \eqref{deo}, if $t\le T^*$, we have by \eqref{stima1} that 
	\begin{align*}
	\frac{1}{\si}\int_{\Omega} G_k(\wn(t))^{\sigma} + \alpha \frac{\sigma-1}{2\b^p }\iint_{Q_t}|\nabla ( G_k(\wn)^{\b})|^p     
	\le \bc\norm{f\chi_{\set{f>k^\gi}}}_{L^1(Q_T)}
	+\frac{1}{\si}\int_{\Omega} G_k(u_0)^{\sigma}, 
	\end{align*}
	and by \eqref{stima2} that
	\begin{align*}
	\frac{1}{\si}\int_{\Omega} G_k(\wn(t))^{\sigma} + \alpha \frac{\sigma-1}{2\b^p }\iint_{Q_t}|\nabla ( G_k(\wn)^{\b})|^p  
	\le c_2\norm{f\chi_{\set{f>k^\gi}}}_{L^r(0,T;L^m(\Omega))}^{\frac{a\b}{a\b-(\si-1-\gi)}}
	+\frac{1}{\si}\int_{\Omega} G_k(u_0)^{\sigma}. 
	\end{align*}
	Now we want to extend the previous inequalities to the whole interval $[0,T]$. Let us suppose that $T^*<T$ and set $t=T^*$. Then, by the definition of $T^*$ and the condition  \eqref{fg}, we have
	\[
	\delta_0<\de_0,
	\] 
	which is in contradiction with the definition of $T^*$. Hence, $T^*=T$, and the previous inequalities hold on the whole interval $[0,T]$.\\
	We summarize the previous result in the following inequality 
	\begin{equation}\label{m11}
	\sup_{t\in(0,T)}\int_{\Omega} G_k(\wn(t))^{\sigma} +\iint_{Q_t}|\nabla ( G_k(\wn)^{\b})|^p   \le M \q\forall k\ge\ok,
	\end{equation}
	where $M$ does not depend on $n$. \\
	\noindent The inequality in \eqref{m11} and the decomposition $\wn=\Gk{\wn}+\Tk{\wn}$ allow us to deduce that
	\[
	\sup_{t\in(0,T)}\int_{\Omega} \wn(t)^{\sigma} \le M +\ok^{\sigma}|\Omega|,
	\]
	and thus, for $T<\infty$,  $\wn$ is uniformly bounded  in $L^\infty(0,T;L^{\sigma}(\Omega))$ with respect to $n$. The desired estimate follows by setting $\wn=e^{-\eta t}\un$. This concludes the proof.
\end{proof}

\medskip

\begin{corollary}\label{todo}
	Under the assumptions of Lemma \ref{sapp},  there exists $\hat{k}\in\NN$ such that we have
	\begin{equation*} 
	\iint_{Q_T}|\nabla \Gk{\un}|^p   \le M\q\forall k\ge\hat{k},
	\end{equation*}
 where $M$ is a positive constant which does not depend on $n$.
\end{corollary}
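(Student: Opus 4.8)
The plan is to derive this bound directly from Lemma \ref{sapp}, exploiting that in the present subrange $\sigma\ge 2$ one has $\b=\frac{\sigma-2+p}{p}\ge 1$, combined with an elementary pointwise comparison between $|\nabla G_k(u_n)|$ and $|\nabla(G_{k-1}(u_n)^{\b})|$. First I would take $\ok\in\NN$ to be the threshold furnished by Lemma \ref{sapp} and set $\hat k:=\ok+1$; then I fix an arbitrary $k\ge\hat k$, so that in particular $k-1\ge\ok$ and Lemma \ref{sapp} applies with $k$ replaced by $k-1$.

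The key step is the chain rule together with a support argument. Almost everywhere in $Q_T$ one has $\nabla G_k(u_n)=\chi_{\{u_n>k\}}\nabla u_n$ and $\nabla G_{k-1}(u_n)=\chi_{\{u_n>k-1\}}\nabla u_n$, whence $\nabla G_k(u_n)=\chi_{\{u_n>k\}}\nabla G_{k-1}(u_n)$; moreover on $\{u_n>k\}$ we have $G_{k-1}(u_n)=u_n-k+1\ge 1$. Using $\nabla(G_{k-1}(u_n)^{\b})=\b\,G_{k-1}(u_n)^{\b-1}\nabla G_{k-1}(u_n)$ and $p(\b-1)\ge 0$, it follows that on that set
\[
|\nabla G_{k-1}(u_n)|^p=\b^{-p}\,G_{k-1}(u_n)^{-p(\b-1)}\,|\nabla(G_{k-1}(u_n)^{\b})|^p\le \b^{-p}\,|\nabla(G_{k-1}(u_n)^{\b})|^p .
\]
Integrating over $Q_T$ and discarding the complement of $\{u_n>k\}$ (where $\nabla G_k(u_n)=0$ a.e.) then gives
\[
\iint_{Q_T}|\nabla G_k(u_n)|^p\le \b^{-p}\iint_{Q_T}|\nabla(G_{k-1}(u_n)^{\b})|^p,
\]
and since $k-1\ge\ok$, the right-hand side is bounded by $\b^{-p}M$ uniformly in $n$ by Lemma \ref{sapp} taken at $t=T$. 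Renaming the constant yields the claim with $\hat k=\ok+1$.

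I do not expect a real obstacle here; the only subtlety is the passage through the thin layer $\{k<u_n<k+1\}$, where $G_k(u_n)^{\b}$ degenerates at $0$ and one cannot compare $|\nabla G_k(u_n)|$ with $|\nabla(G_k(u_n)^{\b})|$ directly. Replacing $G_k$ by $G_{k-1}$, which stays $\ge 1$ on the whole of $\{u_n>k\}$, is precisely what removes this difficulty (and when $\sigma=2$, i.e. $\b=1$, the estimate is immediate since $\nabla(G_{k-1}(u_n)^{\b})=\nabla G_{k-1}(u_n)$).
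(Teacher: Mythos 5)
Your proof is correct, and it is genuinely different from the paper's. The paper establishes the corollary by going back to the approximating equation: it tests \eqref{eqn} with $G_k(u_n)$, controls the resulting right-hand side via \eqref{g2}, \eqref{h2}, H\"older, the Gagliardo--Nirenberg inequality (Theorem \ref{teoGN}) and Young's inequality, and finally absorbs the gradient term using the smallness of $\|G_k(u_n)\|_{L^\infty(0,T;L^\sigma(\Omega))}$ for $k$ large, which is what the threshold $\hat k$ encodes. Your argument, by contrast, is purely pointwise: shifting the truncation level by one so that $G_{k-1}(u_n)\ge 1$ on $\{u_n>k\}$, and using $\beta\ge 1$ (i.e.\ $\sigma\ge 2$, which is exactly the standing assumption of this subsection), you bound $\iint_{Q_T}|\nabla G_k(u_n)|^p$ by $\beta^{-p}\iint_{Q_T}|\nabla(G_{k-1}(u_n)^\beta)|^p$, which is already under control by Lemma \ref{sapp}. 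This avoids any further use of the equation, the Gagliardo--Nirenberg machinery, and the absorption-by-smallness step, and it makes the dependence $\hat k=\bar k+1$ explicit. The one point to be aware of is that the argument is genuinely restricted to the finite-energy regime $\sigma\ge 2$: for $1<\sigma<2$ the exponent $p(\beta-1)$ is negative and the comparison $G_{k-1}(u_n)^{-p(\beta-1)}\le 1$ fails (indeed the paper needs the weaker Corollary \ref{todoinfinite} there). Within the present subsection this restriction is exactly the hypothesis, so your proof is a valid and leaner alternative.
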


\begin{proof}
	We take $\Gk{\un}$,  $k\ge s_2$,  as  test function in \eqref{eqn}, providing the following estimate 
	$$
	\begin{aligned}
	\frac{1}{2}\int_{\Omega} \Gk{\un(T)}^{2} +  \alpha \iint_{Q_T}|\nabla \Gk{\un}|^p   
	&\le   \bc\iint_{Q_T} \frac{|\nabla \Gk{\un}|^q}{\un^\ti} \Gk{\un}  
	+\bc\iint_{Q_T} \frac{f}{\un^\gi} \Gk{\un}     + \frac{1}{2}\int_{\Omega} \Gk{\un(0)}^{2}\\
&=A+B+\frac{1}{2}\int_{\Omega} \Gk{\un(0)}^{2}.
	\end{aligned}
	$$
	Since $0\le\ti<1$,  H\"older's inequality implies that   
	\begin{align*}
	A\le  \bc\pare{\iint_{Q_T}|\nabla \Gk{\un}|^p }^{\frac{q}{p}}\pare{
		\iint_{Q_T} \Gk{\un}^{\frac{p}{p-q}(1-\ti)}
	}^{\frac{p-q}{p}} .
	\end{align*}
	Since by definition of $\si$
	\[
	\frac{p}{p-q}(1-\ti)=p\frac{N+\si}{N},
	\]
	which is the Gagliardo-Nirenberg interpolation exponent (see Theorem \ref{teoGN}) for
	\[
	L^\infty(0,T;L^\si(\Omega))\cap L^p(0,T;W^{1,p}_0(\Omega)),
	\]
we have
\[
\iint_{Q_T} \Gk{\un}^{\frac{p}{p-q}(1-\ti)}=\iint_{Q_T} \Gk{\un}^{p\frac{N+\si}{N}}\le c\norm{\Gk{\un}}_{L^\infty(0,T;L^\si(\Omega))}^\frac{p\si}{N}\iint_{Q_T}|\N \Gk{\un}|^p.
\]
Hence, we can estimate the $A$ term as 
	\[
	A\le c \|G_k(\un)\|_{L^{\infty}(0,T;L^{\si}(\Omega))}^{q-p+1-\ti}\iint_{{Q_T}}|\nabla  G_k(\un)|^p.
	\]
	As far as $B$ is concerned, we just say that
	\[
	B\le c\norm{f}_{L^1(Q_T)}
	\]
	when $\gi\ge1$.\\
	Now let $0\le\gi<1$. We apply the H\"older inequality obtaining
	\[
	B\le \|f\|_{L^r(0,T;L^m(\Omega))}
	\|G_k(\un)\|_{L^{r'(1-\gi)}(0,T;L^{m'(1-\gi))}(\Omega))}^{1-\gi}.
	\]
	We require
	\[
	a \ge r'(1-\gi)\q\t{and}\q b\ge m'(1-\gi),
	\]
	for $a,\,b$ such that
	\[
	\frac{N\si}{ b}+\frac{N(p -\si)+p\si}{ a}=N,
	\]
	in order to apply Theorem \ref{teoGN} in
	\[
	L^\infty(0,T;L^\si(\Omega))\cap L^p(0,T;W^{1,p}_0(\Omega)),
	\] 
and obtain
	\begin{align*}
	B &\le c\|f\|_{L^r(0,T;L^m(\Omega))}\left(
	\|G_k(\un)\|_{L^{\infty}(0,T;L^{\si}(\Omega))}^{a-p}\iint_{{Q_T}}|\nabla  G_k(\wn)|^p
	\right)^{\frac{1-\gi}{a}}\nonumber
	\\
	&\le c_1\|G_k(\un)\|_{L^{\infty}(0,T;L^{\si}(\Omega))}^{a-p}\iint_{{Q_T}}|\nabla  G_k(\wn)|^p +c_2\|f\|_{L^r(0,T;L^m(\Omega))}^{\frac{a}{a-(1-\gi)}},
	\end{align*}
	where the last passage is due to Young's inequality with indices $\pare{\frac{a}{1-\gi},\frac{a}{a-(1-\gi)}}$.
	
	Now, for $k$ large enough, $\|G_k(\un)\|_{L^{\infty}(0,T;L^{\si}(\Omega))}$ is as small as one needs, thanks to Lemma \ref{sapp}. This concludes the proof.
\end{proof}

Now we need some estimates near the origin, so we work on the truncations of $\un$.

\begin{lemma}\label{tk}
	Under the assumptions of Lemma \ref{sapp}, let us also assume that \eqref{g1} and \eqref{h1} hold with $\tz\le 1$ and $\gz\le 1$. Then 
	\begin{equation}\label{disTk}
	\iint_{Q_T}|\nabla T_{k}(\un)|^p      \le M\qq\forall k>0,
	\end{equation} 
	where $M$ is a positive constant which does not depend on $n$.  
\end{lemma}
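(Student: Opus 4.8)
The plan is to use $\Tk{\un}$ itself as a test function in \eqref{eqn} and absorb the superlinear gradient contribution into the left‑hand side, exploiting the mild‑singularity bounds $\tz\le1$, $\gz\le1$ near the origin and the energy estimate of Corollary \ref{todo} away from it. Since $k\mapsto\iint_{Q_T}|\N\Tk{\un}|^p$ is nondecreasing and, for $k\ge k_0:=\max\{1,\oom,\hat k\}$, one has $|\N(\Tk{\un}-T_{k_0}(\un))|=|\N\un|\,\chi_{\{k_0<\un<k\}}\le|\N G_{\hat k}(\un)|$ (because $k_0\ge\hat k$), it suffices to prove the estimate for the single level $k=k_0$; the general case then follows by adding the bound of Corollary \ref{todo}. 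Fix $k=k_0$ and take $\Tk{\un}\in L^p(0,T;W^{1,p}_0(\Omega))\cap L^\infty(Q_T)$ as test function, integrating in time. Using $a(t,x,\un,\N\un)\cdot\N\Tk{\un}=a(t,x,\un,\N\un)\cdot\N\un\,\chi_{\{\un<k\}}$ with \eqref{A1}, that $H_n=T_n(H)\le H$ with \eqref{H}, and writing $\Theta_k(s)=\int_0^sT_k(\tau)\,d\tau\ge0$ (so $\Theta_k(s)\le ks$), one obtains
\[
\int_{\Omega}\Theta_k(\un(T))+\al\iint_{Q_T}|\N\Tk{\un}|^p\le \iint_{Q_T}g(\un)|\N\un|^q\Tk{\un}+\iint_{Q_T}h(\un)f\,\Tk{\un}+k\norm{u_0}_{L^1(\Omega)},
\]
and the last term is finite and independent of $n$ (since $u_{n,0}\le u_0$).

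For both lower order terms I split $Q_T$ into $\{\un<\uom\}$, $\{\uom\le\un\le\oom\}$ and $\{\un>\oom\}$. On the first set \eqref{h1} with $\gz\le1$ gives $h(\un)\Tk{\un}\le\uc\,\un^{1-\gz}\le c$; on the second $h$ is bounded (continuous on the compact $[\uom,\oom]$) and $\Tk{\un}\le k$; on the third \eqref{h2} yields $h(\un)\le\bc\,\oom^{-\gi}$ and again $\Tk{\un}\le k$. Hence $\iint_{Q_T}h(\un)f\,\Tk{\un}\le c(k)\norm{f}_{L^1(Q_T)}$, which is finite and $n$‑independent because $|Q_T|<\infty$ and either \eqref{F1} with $r,m\ge1$ or \eqref{F3} holds. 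In exactly the same way, using \eqref{g1} with $\tz\le1$ on $\{\un<\uom\}$ (so $g(\un)\Tk{\un}\le\uc\,\un^{1-\tz}\le c$), continuity of $g$ on $[\uom,\oom]$, and \eqref{g2} on $\{\un>\oom\}$, one reduces the gradient term to
\[
\iint_{Q_T}g(\un)|\N\un|^q\Tk{\un}\le c\iint_{Q_T}|\N T_{\uom}(\un)|^q+c\,k\iint_{Q_T}|\N G_{\oom}(\un)|^q.
\]
Since $q<p$, Young's inequality bounds the first integral by $\eps\iint_{Q_T}|\N T_{\uom}(\un)|^p+c_\eps\le\eps\iint_{Q_T}|\N\Tk{\un}|^p+c_\eps$ (as $\uom\le k$) and the second by $\eps\iint_{Q_T}|\N G_{\oom}(\un)|^p+c_\eps$; finally $\iint_{Q_T}|\N G_{\oom}(\un)|^p=\iint_{\{\un>\oom\}}|\N\un|^p\le\iint_{Q_T}|\N\Tk{\un}|^p+M$ by Corollary \ref{todo} (the part of $\{\un>\oom\}$ with $\un>\hat k$ carries energy at most $M$, while the part with $\oom<\un\le\hat k$ lies in $\{\un\le k_0\}=\{\un\le k\}$).

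Collecting these bounds one gets
\[
\al\iint_{Q_T}|\N\Tk{\un}|^p\le c\,\eps\iint_{Q_T}|\N\Tk{\un}|^p+C,
\]
with $C$ independent of $n$. Since $\un$ is a finite energy solution of \eqref{eqn}, the left‑hand side is finite, so choosing $\eps$ small enough the gradient term is absorbed, which gives the estimate at level $k_0$ and hence, by the reduction above, \eqref{disTk} for every $k>0$. The main obstacle is the superlinear term on $\{\un>\oom\}$: it cannot be absorbed by itself and must be traded, via Corollary \ref{todo}, against the already‑controlled energy of $G_{\hat k}(\un)$, with the minor bookkeeping that $\hat k$ may exceed $\oom$, so that an intermediate band $\{\oom<\un\le\hat k\}$ has to be folded into $\iint_{Q_T}|\N\Tk{\un}|^p$ before absorbing. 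Near the origin nothing is delicate provided $\tz,\gz\le1$; if either exponent exceeds $1$ the quantities $\un^{1-\tz}$, $\un^{1-\gz}$ are unbounded as $\un\to0^+$ and this argument breaks down, coherently with the strongly singular case treated in Section \ref{sec:strong}, where one only estimates a truncation of $\un$ bounded away from zero.
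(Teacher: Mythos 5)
Your proof follows essentially the same strategy as the paper's: test \eqref{eqn} with $T_k(\un)$, bound the singular factors using $\theta_0,\gamma_0\le1$ on the low levels, absorb the subnatural gradient term $|\N T_k(\un)|^q$ via Young's inequality (with $q<p$), and trade the high-level contribution against the energy of $G_{\hat k}(\un)$ already controlled by Corollary~\ref{todo}. The paper uses a cleaner two-region split $\{\un\le k\}$ vs.\ $\{\un>k\}$, whereas you split into three regions; both are correct.

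One slip: your displayed reduction
\[
\iint_{Q_T}g(\un)|\N\un|^q\Tk{\un}\le c\iint_{Q_T}|\N T_{\uom}(\un)|^q+c\,k\iint_{Q_T}|\N G_{\oom}(\un)|^q
\]
omits the middle band $\{\uom\le\un\le\oom\}$, where neither $\N T_{\uom}(\un)$ nor $\N G_{\oom}(\un)$ sees the gradient. There $g$ is bounded and $|\N\un|=|\N T_{\oom}(\un)|\le|\N T_k(\un)|$ (since $\oom\le k_0$), so this region contributes an extra $c\,k\iint_{Q_T}|\N T_k(\un)|^q$, which is absorbed by the same Young step; replacing $T_{\uom}$ by $T_{\oom}$ (or simply $T_k$) in your display repairs the inequality without altering anything downstream.
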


\begin{proof}
	
	We first prove that  $|\N T_{\ok}(\un)|$ is uniformly bounded in $L^p(Q_T)$ with respect to $n$ for $k\ge \hat{k}$ (where $\hat{k}>s_2$ has been defined in Corollary \ref{todo}). Then, we obtain the result by monotonicity. To this aim, we choose $T_{k}( \un)$ as test function  in \eqref{eqn}, getting
	\begin{align*}
	&\int_{\Omega} \Theta_{k}( \un(T)) +
	\al \iint_{Q_T}|\nabla  T_{k}( \un)|^p   \\
	&\le    \max_{[0,k]}s\,g(s)\iint_{\{ \un\le k\}} |\nabla  \un|^q 
	+\max_{[0,k]}s\,h(s)\iint_{{\{ \un\le k\}}} f + \bc \iint_{\{ \un> k\}} |\nabla  \un|^q T_{k}( \un) \un^{-\ti } \\
	&+ \sup_{[k,+\infty)}h(s)\iint_{{\{ \un>{k}\}}} f T_{k}( \un)    + \int_{\Omega} \Theta_{k}(u_0) \\
&=A+B+ C+D+\int_{\Omega} \Theta_{k}(u_0) ,
	\end{align*}
	where $\Theta_{k}(s)=\int_0^{s}T_{k}(v)\,dv$. We point out that \eqref{g1}, \eqref{h1} and $\tz,\,\gz\le 1$ ensure that $\max_{[0,k]}s\,g(s),\,\max_{[0,k]}s\,h(s)$  are finite.\\
	We  estimate the $A$ term as follows:
	\begin{align*}
	A&\le 
	\max_{[0,k]}s\,g(s) \iint_{\{ \un\le k\}} |\N T_{k}( \un)|^q   \le \frac{ \al}{2}\iint_{Q_T}|\nabla T_{k}( \un)|^p+c(k)
	\end{align*}
	We also have  
	\begin{align*}
	B+D\le   
	c(k)\|f\|_{L^1(Q_T)}.
	\end{align*}
	Now, we estimate $C$ as 
	\begin{align*}
	C&\le \bc k^{1-\ti}\iint_{\{ \un> k\}} |\nabla  \un|^q, 
	\end{align*}
	which is bounded by Corollary \ref{todo}. 
	Thus,  the estimate \eqref{disTk} follows  by monotonicity for any $k>0$.

\end{proof}

\begin{remark}\label{stimaenergiafinita}
 We underline that Lemma \ref{sapp} gives us the uniform boundedness of $\un$ in $L^{p\b\frac{N+\frac{\si}{\b}}{N}}(Q_T)$ with respect to $n$ (recall that $\b=\frac{\si+p-2}{p}$). 
Indeed, by Lemma \ref{sapp}, $G_k(\un)^{\beta}$  is bounded with respect to $n\in\NN$ in
\[
L^\infty(0,T;L^\frac{\si}{\b}(\Omega))\cap L^p(0,T;W^{1,p}_0(\Omega))\q\forall k\ge \ok.
\]
Then,  by Theorem \ref{teoGN}, the claim follows. Note also that $p\b\frac{N+\frac{\si}{\b}}{N}>p$. 
Furthermore, Lemma \ref{todo} and Lemma \ref{tk} imply that $|\nabla \un|$ is actually bounded in $L^p(Q_T)$ with respect to $n$.
\end{remark}

We continue with  the following regularity lemma:

\begin{lemma}\label{pot}
		Under the assumptions of Lemma \ref{tk}, it holds that
		\begin{equation*} 
		\int_{\Omega} \un(t)^{\sigma} +\iint_{Q_t}|\nabla \un^{\b}|^p    \le M,
		\end{equation*}
	where $M$ is a positive constant which does not depend on $n$.
\end{lemma}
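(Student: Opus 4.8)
The plan is to obtain this estimate without performing any new testing of the equation, simply by gluing together the a priori bounds already established in Lemma~\ref{sapp}, Corollary~\ref{todo} and Lemma~\ref{tk}. The bound $\int_\Omega\un(t)^\sigma\le M$ is nothing but the $L^\infty(0,T;L^\sigma(\Omega))$ estimate for $\un$ obtained at the end of the proof of Lemma~\ref{sapp} (recall $\un=e^{\eta t}\wn$ and $e^{\eta t}\le e^{\eta T}$), so the only real task is the global energy bound $\iint_{Q_T}|\nabla\un^\beta|^p\le M$; the version with $Q_t$, $0<t\le T$, then follows by restriction. To this end I would fix $\bar k\in\NN$ at least as large as $\max\{1,\oom\}$ and as the thresholds $\ok$ of Lemma~\ref{sapp} and $\hat k$ of Corollary~\ref{todo}, and split the power at level $\bar k$. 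Since $\un\in L^\infty(Q_T)$ and, by Remark~\ref{stimaenergiafinita}, $\nabla\un\in L^p(Q_T)$, we have $\un^\beta\in L^p(0,T;W^{1,p}_0(\Omega))$; moreover $s\mapsto s^\beta$ being increasing, $\un^\beta=T_{\bar k}(\un)^\beta+(\un^\beta-\bar k^\beta)^+$, and the weak gradients of the two summands are supported in the disjoint sets $\{\un<\bar k\}$ and $\{\un>\bar k\}$, so that
\[
|\nabla\un^\beta|^p=|\nabla T_{\bar k}(\un)^\beta|^p+|\nabla(\un^\beta-\bar k^\beta)^+|^p\qquad\text{a.e. in }Q_T.
\]

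For the first summand, $|\nabla T_{\bar k}(\un)^\beta|=\beta\,T_{\bar k}(\un)^{\beta-1}|\nabla T_{\bar k}(\un)|\le\beta\,\bar k^{\beta-1}|\nabla T_{\bar k}(\un)|$ --- the monotonicity being licit because $\beta=\frac{\sigma-2+p}{p}\ge1$ in the range $\sigma\ge2$ at hand --- so Lemma~\ref{tk} gives $\iint_{Q_T}|\nabla T_{\bar k}(\un)^\beta|^p\le(\beta\bar k^{\beta-1})^p\iint_{Q_T}|\nabla T_{\bar k}(\un)|^p\le M$. For the second summand, on $\{\un>\bar k\}$ one has $\nabla(\un^\beta-\bar k^\beta)^+=\beta\,\un^{\beta-1}\nabla\un$, and I would split this region once more. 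On $\{\bar k<\un\le2\bar k\}$ one bounds $\un^{\beta-1}\le(2\bar k)^{\beta-1}$ and $\chi_{\set{\bar k<\un\le2\bar k}}|\nabla\un|\le|\nabla T_{2\bar k}(\un)|$, so this contribution is controlled by $c\iint_{Q_T}|\nabla T_{2\bar k}(\un)|^p\le M$, again through Lemma~\ref{tk}. On $\{\un>2\bar k\}$ one has $G_{\bar k}(\un)=\un-\bar k>\tfrac12\un$, hence $\un^{\beta-1}\le2^{\beta-1}G_{\bar k}(\un)^{\beta-1}$ (again using $\beta\ge1$) and $\nabla\un=\nabla G_{\bar k}(\un)$, so that
\[
\beta\,\un^{\beta-1}|\nabla\un|\,\chi_{\set{\un>2\bar k}}\le2^{\beta-1}\beta\,G_{\bar k}(\un)^{\beta-1}|\nabla G_{\bar k}(\un)|=2^{\beta-1}\bigl|\nabla G_{\bar k}(\un)^\beta\bigr| ,
\]
whose $p$-th power integrates over $Q_T$ to a quantity bounded uniformly in $n$ by Lemma~\ref{sapp} (here $\bar k\ge\ok$ is used). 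Adding the three contributions yields $\iint_{Q_T}|\nabla\un^\beta|^p\le M$, which together with the $L^\infty(0,T;L^\sigma(\Omega))$ bound is the claim; the same computation with $\iint_{Q_t}$ and $\int_\Omega\un(t)^\sigma$ gives the stated form.

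The argument is elementary once the previous lemmas are available, and I do not expect a genuine obstacle here; the only place requiring attention --- and the one in which the assumption $\sigma\ge2$, i.e. $\beta\ge1$, is used in an essential way --- is the comparison $\un^{\beta-1}\le2^{\beta-1}G_{\bar k}(\un)^{\beta-1}$ on the set where $\un$ is large, supplemented by the estimate $\un^{\beta-1}\le(2\bar k)^{\beta-1}$ on the transition zone $\{\bar k<\un\le2\bar k\}$ where $G_{\bar k}(\un)$ is not yet comparable to $\un$. For $\beta<1$ the exponent $\beta-1$ reverses these inequalities, which is exactly why the complementary range $1<\sigma<2$ must later be treated separately, working with $\un(1+\un)^{\beta-1}$ in place of $\un^\beta$.
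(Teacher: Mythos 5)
Your proof is correct, and it takes a genuinely different route from the paper's. The paper performs a new test of the equation \eqref{eqn} with $T_{\ok}(\un)^{\sigma-1}$, obtaining $\iint_{Q_T}|\nabla(T_{\ok}(\un)^\beta)|^p\le M$ directly from the weak formulation; this step uses the assumptions $\theta_0\le1$, $\gamma_0\le1$ once more (to make $\sup_{s\ge0}g(s)T_{\ok}(s)^{\sigma-1}$ and $\sup_{s\ge0}h(s)T_{\ok}(s)^{\sigma-1}$ finite), and then invokes Corollary~\ref{todo} and Lemma~\ref{tk} to bound the right-hand side. You instead avoid any further testing and obtain the global energy bound purely by an algebraic splitting at the levels $\bar k$ and $2\bar k$: the low range $\{\un\le\bar k\}$ is controlled by Lemma~\ref{tk} via $T_{\bar k}(\un)^{\beta-1}\le\bar k^{\beta-1}$ (using $\beta\ge1$), the transition zone $\{\bar k<\un\le2\bar k\}$ is again controlled by Lemma~\ref{tk} at level $2\bar k$, and the high range $\{\un>2\bar k\}$ is controlled by Lemma~\ref{sapp} via the comparison $\un<2G_{\bar k}(\un)$. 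Your route is more self-contained (the estimate becomes a pure corollary of Lemmas~\ref{sapp} and~\ref{tk}) and makes explicit the transition-zone issue that the paper leaves implicit: one cannot compare $\un^{\beta-1}$ with $G_{\ok}(\un)^{\beta-1}$ just above the cut $\ok$, so the reduction of $\nabla\un^\beta$ to $\nabla T_{\ok}(\un)^\beta$ and $\nabla G_{\ok}(\un)^\beta$ hides precisely the splitting you spell out. Your closing remark correctly identifies $\beta\ge1$ as the place where $\sigma\ge2$ is essential and why the complementary range needs the weight $(1+\un)^{\beta-1}$ instead. Both proofs are fine; yours is the tidier one.
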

\begin{proof}
	It follows from Lemma \ref{sapp} that it is sufficient to show that $|\N (T_{\ok}(\un)^\beta)|$ is bounded in $L^p(Q_T)$ with respect to $n$ in order to conclude the proof. 
	Hence we test the equation in \eqref{eqn} with $T_{\ok}(\un)^{\sigma-1}$, so that we have 
		\begin{align*}
	\ds
	&\alpha\frac{ \sigma-1}{\beta^p}\iint_{Q_T}|\nabla ( T_{\ok}(\un)^{\beta})|^p\\
&	\le \sup_{[0,+\infty)} \pare{g(s)T_{\ok}{(s)}^{\si-1}}
	\iint_{Q_T}|\N\un|^q
	+\sup_{[0,+\infty)} \pare{h(s)T_{\ok}({s})^{\si-1}}\iint_{Q_T}f +\int_{\Omega}\Theta_{\ok}(u_0),
	\end{align*}
where $\Theta_{k}(s)=\int_0^{s}T_{\ok}(v)^{\si-1}\,dv$.\\
The right-hand is bounded by a constant, depending on $\ok$, thanks to Corollary \ref{todo} and Lemma \ref{tk}.
\end{proof}

\subsection{Infinite energy solutions}

This section deals with the case in which the initial datum $u_0$ does not belong to $L^2(\Omega)$ and solutions with infinite energy are expected to exist. As for $q$, this means that we are in \eqref{Q1} range for small values of $q$ or in the \eqref{Q2} one, i.e.  
\begin{align*}
&\max\set{\frac{p(1+\ti)}{2},\frac{N(p-1+\ti)+p(1+\ti)}{N+2}}<q<p-\frac{N(1-\ti)}{N+2}\q\t{for}\q p>\frac{2N}{N+1},\\
&\frac{p(1+\ti)}{2}<q\le p-\frac{N(1-\ti)}{N+2}\q\t{for}\q  \frac{2N}{N+2}<p\le \frac{2N}{N+1}.
\end{align*}
We refer to Figures \ref{fig:1}, \ref{fig:2}, and \ref{fig:3} for further comments on the above ranges.

\medskip

We start with the following lemma:

\begin{lemma}\label{sappinfinite}
	
Assume that $a$ satisfies \eqref{A1}, \eqref{A2}, and that $H$ satisfies \eqref{H} with \eqref{g2}, \eqref{h2} where $q$ is as in \eqref{Q1} or \eqref{Q2}. In particular $u_0 \in L^\sigma(\Omega)$ and $f\in L^r(0,T;L^m(\Omega))$ are nonnegative and such that:
\begin{enumerate}[i)]
	\item if \eqref{Q1} holds and the value $\si$ is as in \eqref{ID1} and it verifies $1<\si<2$, we assume \eqref{F1} when  $\gi<\si-1$, and \eqref{F3} if $\gi\ge \si-1$;
	\item if \eqref{Q2} holds, we let $1<\sigma<2$ and we assume \eqref{F1} when  $\gi<\si-1$, and \eqref{F3} if $\gi \ge \si-1$.
\end{enumerate}
Let $\un$ be a solution to \eqref{eqn}. Then, there exists  $\ok\in\NN$ such that  
\begin{equation}\label{disGk2infinite}
\int_{\Omega} \un(t)^{\sigma} +\iint_{Q_t}|\nabla \Gk{\un}|^p(1+\Gk{\un})^{\si-2}   \le M\q\forall k\ge\ok,
\end{equation}
where $M$ is a positive constant which does not depend on $n$. 	
\end{lemma}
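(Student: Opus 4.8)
The plan is to reproduce the scheme of Lemma~\ref{sapp}, the only real change being forced by $\si<2$: the pure power $\Gk{\wn}^{\si-1}$ is no longer admissible as a test function (its derivative blows up on $\set{\wn=k}$), so I would replace it by the regularized power $(1+\Gk{\wn})^{\si-1}-1$, which is a Lipschitz function of $\Gk{\wn}$ vanishing at the origin (its derivative $(\si-1)(1+s)^{\si-2}$ is bounded on $[0,+\infty)$ since $\si-1<1$), hence admissible. Working with $\wn=e^{-\eta t}\un$ as in \eqref{changeq}, I would test by $(1+\Gk{\wn})^{\si-1}-1$ with $k\ge\max\set{1,\oom}$. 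The parabolic term then produces $\int_{\Omega}\Psi_k(\wn(t))-\int_{\Omega}\Psi_k(u_0)$ with $\Psi_k(s)=\tfrac1\si(1+\Gk{s})^\si-\Gk{s}-\tfrac1\si$, which enjoys $c_1\Gk{s}^\si-c_2\le\Psi_k(s)\le\tfrac1\si\Gk{s}^\si$; the diffusion term, by \eqref{A1} and the identity $p(\b-1)=\si-2$, is at least
\[
\al(\si-1)\iint_{Q_t}|\N\Gk{\wn}|^p(1+\Gk{\wn})^{\si-2}=\frac{\al(\si-1)}{\b^p}\iint_{Q_t}\left|\N\!\left((1+\Gk{\wn})^\b-1\right)\right|^p ;
\]
and the zero order term $\eta\iint_{Q_t}\wn\bigl[(1+\Gk{\wn})^{\si-1}-1\bigr]$ is nonnegative and, being supported on $\set{\wn>k}$, bounded below by $\eta k\iint_{Q_t}\bigl[(1+\Gk{\wn})^{\si-1}-1\bigr]$. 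Using \eqref{H}, \eqref{g2}, \eqref{h2} and the elementary bound $(1+\Gk{\wn})^{\si-1}-1\le\Gk{\wn}^{\si-1}$, the right-hand side splits into a gradient contribution $A$ and a source contribution $B$, exactly as in Lemma~\ref{sapp}.

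\textbf{The term $A$.} Since $k\ge1$ forces $1+\Gk{\wn}\le\wn$ on $\set{\wn>k}$, and since $\si-1>\ti$ (imposed without loss of generality in both cases, see Remark~\ref{soglie}), I would estimate $g(\wn)$ via \eqref{g2} to get $A\le c\iint_{Q_t}|\N\Gk{\wn}|^q(1+\Gk{\wn})^{\si-1-\ti}$. Writing $\N\Gk{\wn}=\b^{-1}(1+\Gk{\wn})^{1-\b}\N((1+\Gk{\wn})^\b-1)$, applying H\"older with exponents $\pare{\tfrac pq,\tfrac p{p-q}}$, and then Sobolev's inequality together with the Gagliardo--Nirenberg relation \eqref{rel} of Theorem~\ref{teoGN} applied to $(1+\Gk{\wn})^\b-1\in L^\infty(0,T;L^{\si/\b}(\Omega))\cap L^p(0,T;W^{1,p}_0(\Omega))$, the exponents close and yield
\[
A\le c_0\,\norm{\Gk{\wn}}_{L^\infty(0,t;L^\si(\Omega))}^{q-p+1-\ti}\iint_{Q_t}\left|\N\!\left((1+\Gk{\wn})^\b-1\right)\right|^p ,
\]
with $q-p+1-\ti>0$ by \eqref{Q1}--\eqref{Q2}; here the matching of exponents is forced by the precise value of $\si$ in \eqref{ID1} in case~i), and by the inequalities defining the range \eqref{Q2} in case~ii), which in particular guarantee $\si/\b\le p^*$ (equivalently $p>\tfrac{2N}{N+\si}$, in force in both cases).

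\textbf{The term $B$ and the bootstrap.} I would split $B$ over $\set{f\le k^\gi}$ and $\set{f>k^\gi}$. On the first set $\wn>k$ gives $\tfrac{f}{\wn^\gi}<1$, so after fixing $\eta>\bc/\oom$ — whence $\eta k>\bc$ for $k\ge\oom$ — that piece is absorbed by $\eta k\iint_{Q_t}[(1+\Gk{\wn})^{\si-1}-1]$. On the second set, if $\gi\ge\si-1$ I would use $\wn^\gi\ge k^{\gi-(\si-1)}\Gk{\wn}^{\si-1}$ and $k\ge1$ to bound that piece by $\bc\norm{f\chi_{\set{f>k^\gi}}}_{L^1(Q_T)}$; if $\gi<\si-1$, using $\wn^{-\gi}\le1$ and arguing verbatim as in Lemma~\ref{sapp} (H\"older with $(r,m)$ and $(r',m')$, then Gagliardo--Nirenberg under \eqref{ab}, then Young's inequality) I would obtain $B\le c_1\norm{\Gk{\wn}}_{L^\infty(0,t;L^\si(\Omega))}^{\b(a-p)}\iint_{Q_t}|\N((1+\Gk{\wn})^\b-1)|^p+c_2\norm{f\chi_{\set{f>k^\gi}}}_{L^r(0,T;L^m(\Omega))}^{\frac{a\b}{a\b-(\si-1-\gi)}}$. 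Then the bootstrap of Lemma~\ref{sapp} applies with no change: fix a small $\de_0$ so that the coefficients of the $\iint_{Q_t}|\N((1+\Gk{\wn})^\b-1)|^p$ terms, evaluated at $\norm{\Gk{\wn}}_{L^\infty(0,t;L^\si(\Omega))}\le(\si\de_0)^{1/\si}$, stay below $\tfrac{\al(\si-1)}{2\b^p}$; fix $\ok$ large so that the source and initial-datum remainders are $<\de_0$ for all $k\ge\ok$ (possible since $u_0\in L^\si(\Omega)$ and $f\in L^r(0,T;L^m(\Omega))$, resp.\ $L^1(Q_T)$); introduce $T^*=\sup\set{\tau\in[0,T]:\tfrac1\si\norm{\Gk{\wn(s)}}_{L^\si(\Omega)}^\si\le\de_0\ \forall s\le\tau,\ \forall k\ge\ok}>0$; absorb the gradient terms on $[0,T^*]$; and rule out $T^*<T$ by the usual contradiction, so $T^*=T$. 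This gives $\sup_{t\in(0,T)}\int_{\Omega}\Gk{\wn(t)}^\si+\iint_{Q_T}|\N((1+\Gk{\wn})^\b-1)|^p\le M$ for $k\ge\ok$, uniformly in $n$; combining with $\Psi_k(s)\ge c_1\Gk{s}^\si-c_2$, the decomposition $\wn=\Tk{\wn}+\Gk{\wn}$ (costing $\ok^\si|\Omega|$), the identity $|\N((1+\Gk{\wn})^\b-1)|^p=\b^p(1+\Gk{\wn})^{\si-2}|\N\Gk{\wn}|^p$ and $\wn=e^{-\eta t}\un$ yields \eqref{disGk2infinite}.

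\textbf{Main obstacle.} The delicate point is the exponent bookkeeping in $A$ (and in $B$ when $\gi<\si-1$): one must check that, once H\"older, Sobolev and Gagliardo--Nirenberg are applied to the regularized power $(1+\Gk{\wn})^\b$ rather than to $\Gk{\wn}^\b$, the integrability exponents still satisfy \eqref{rel} exactly, and that this holds precisely because of the sharp choice of $\si$ in \eqref{ID1} (case~i)) or the inequalities defining \eqref{Q2} (case~ii)) — all the while keeping the absorption by the zero order term ($\eta>\bc/\oom$) and the continuity-in-time argument compatible, even though $\Psi_k$ is no longer an exact power of $\Gk{\wn}$. By contrast, the singular behaviour of the data at $\set{\wn=0}$ plays no role at this stage, since every computation above lives on $\Gk{\wn}$, i.e.\ on $\set{\wn>k}$.
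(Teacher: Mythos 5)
Your overall plan — regularize the non-Lipschitz power $\Gk{\wn}^{\si-1}$ by something of the form $(\eps+\Gk{\wn})^{\si-1}-\eps^{\si-1}$, then run the same Hölder/Sobolev/Gagliardo--Nirenberg and continuity-in-time scheme as in Lemma~\ref{sapp} — is exactly the paper's, but you fix $\eps=1$ from the outset, and that is precisely where the argument fails. What the Sobolev/GN machinery controls is $v:=(1+\Gk{\wn})^\b-1$, while the Hölder step in $A$ produces $\iint_{Q_t}(1+\Gk{\wn})^{p\b+p\si/N}=\iint_{Q_t}(v+1)^{p\frac{N+\si/\b}{N}}$, not $\iint_{Q_t}v^{p\frac{N+\si/\b}{N}}$. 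The discrepancy from the $+1$ yields an additive term of order $T|\Omega|$, so the honest output of the $A$-estimate is
\[
A\le c\,\norm{\Gk{\wn}}_{L^\infty(0,t;L^\si)}^{q-p+1-\ti}\iint_{Q_t}|\N v|^p + c\,(T|\Omega|)^{\frac{p-q}{p}}\Bigl(\iint_{Q_t}|\N v|^p\Bigr)^{\frac{q}{p}},
\]
and the second term, via Young, contributes a \emph{fixed} additive constant $C_\ast>0$ (depending on $T,|\Omega|,\al,\si,p,q$ but on nothing you can tune). The same problem hits the parabolic term: your $\Psi_k(s)=\tfrac1\si(1+\Gk{s})^\si-\Gk{s}-\tfrac1\si$ only satisfies $\Psi_k(s)\ge\tfrac{1}{2\si}\Gk{s}^\si-c_\ast$ with another fixed $c_\ast>0$. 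These fixed constants wreck the continuity-in-time bootstrap: at $t=T^*$ you would only get $\de_0\le\tfrac{\de_0}{4}+C_\ast+c_\ast|\Omega|$, which is simply \emph{true} for small $\de_0$ rather than a contradiction, so you cannot conclude $T^*=T$. (Enlarging $\de_0$ to dominate $C_\ast$ destroys the absorption step, so the two requirements on $\de_0$ collide.)

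The paper keeps $\eps\in(0,1)$ free precisely so that all such spurious additive terms come with positive powers of $\eps$: $\Theta_\eps(s)\ge\tfrac{1}{2\si}s^\si-\tfrac{\eps^\si}{\si}\bigl(1+(\si-1)2^{1/(\si-1)}\bigr)$, and the unabsorbed pieces of $A$ and $B$ scale like $\eps^{q-p+1-\ti}$ and $\eps^{\b(a-p)}$. One first fixes $\de_0$ (for absorption), then $\ok$ (so the source and initial-datum tails are $<\de_0/4$), then $\eps$ small enough that $\eps^\si|\Omega|(\cdots)<\de_0/4$, and the contradiction at $T^*$ closes. The estimate \eqref{disGk2infinite} with weight $(1+\Gk{\wn})^{\si-2}$ is then recovered \emph{a posteriori} from the monotonicity $|\N\Phi_\eps(\Gk{\wn})|^p\ge|\N\Phi_1(\Gk{\wn})|^p=|\N\Gk{\wn}|^p(1+\Gk{\wn})^{\si-2}$, valid for all $\eps\le1$. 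So $\eps=1$ is the right place to land, but not the right place to test.
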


\begin{proof}\textit{Case i)}\\
\noindent
We multiply  \eqref{changeq} by 
\[
\vp_{\eps}(\Gk{\wn})=\pare{\eps+\Gk{\wn}}^{\si-1}-\eps^{\si-1}\q\t{with}\q 0<\eps<1,\,k>\oom.
\]
Then, integrating over $Q_t$ for $0< t\le T$ and thanks to the assumptions \eqref{A1}, \eqref{H}, \eqref{g2}, \eqref{h2}, we have: 
\begin{equation}\label{main}
\begin{aligned}
&\int_{\Omega} \Theta_\varepsilon(G_k(\wn(t))) + \al(\si-1)\iint_{Q_t}|\N \Phi_\varepsilon(G_k(\wn))|^p 
\\
\ds
&\le \bc  \iint_{Q_t} \frac{|\nabla G_k(\wn)|^q}{\wn^\ti}\vp_{\eps}(\Gk{\wn})+
\bc\iint_{\{ f>k^{\gi}\}} \frac{f}{\wn^{\gi}} \vp_{\eps}(\Gk{\wn})  +   \frac{2^{\si-1}\eps^{\si}|\Omega|}{\si}+\frac{2^{\si-1}}{\si}\int_{\Omega} G_k(u_0)^\sigma\\
&=A+B+\frac{2^{\si-1}\eps^{\si}|\Omega|}{\si}+\frac{2^{\si-1}}{\si}\int_{\Omega} G_k(u_0)^\sigma,
\end{aligned} 
\end{equation}
where we have set
\[
\Theta_\vare(s)=\int_0^s\vp_{\eps}(z)\,dz\q\t{and}\q \Phi_\varepsilon(s)=\frac{1}{\b}\pare{\pare{\eps+s}^\b-\eps^\b}.
\]
Let us also observe that the integral involving the forcing term where $\set{f\le k^\gi}$ has been treated as in Lemma \ref{sapp}. We also used that 
$$\int_{\Omega} \Theta_\vare(G_k(u_0))
\le \frac{2^{\si-1}\eps^{\si}|\Omega|}{\si}+\frac{2^{\si-1}}{\si}\int_{\Omega} G_k(u_0)^\sigma.$$
\\
 Now, let us observe that the definition of $\vp_\varepsilon $ allows us to estimate  $A$  as follows:
$$
\begin{aligned}
A&\le\bc\b{^q}\iint_{Q_t}\frac{|\N \Phi_\varepsilon(G_k(\wn))|^q}{\wn^\ti}(\eps+\Gk{\wn})^{-\frac{q}{p}(\si-2)}\vp_{\eps}(\Gk{\wn})
\\
&\le \bc\b{^q}\iint_{Q_t} |\N \Phi_\varepsilon(G_k(\wn))|^q(\eps+\Gk{\wn})^{q-p+1-\ti +\beta(p-q)} ,
\end{aligned}
$$
where we used that, for $k$ large, one has $w_n\ge \eps + \Gk{\wn}$.
\\ 
Now, by  H\"older's inequality with indices $\left(\frac{p}{q},\frac{p^*}{p-q},\frac{N}{p-q}\right)$, Sobolev's embedding and the definition of $\si$, we obtain that
\begin{align*}
A&\le c\int_0^t\|\eps+\Gk{\wn}\|_{L^{\sigma}(\Omega)}^{q-p+1-\ti}\ \|\nabla \Phi_\varepsilon (G_k(\wn))\|_{L^p(\Omega)}^p\\
&\le c\|\eps+\Gk{\wn}\|_{L^\infty(0,t;L^{\sigma}(\Omega))}^{q-p+1-\ti}\iint_{{Q_t}}|\nabla \Phi_\varepsilon (G_k(\wn))|^p\\
&\le c_1\|\Gk{\wn}\|_{L^\infty(0,t;L^{\sigma}(\Omega))}^{q-p+1-\ti}\iint_{{Q_t}}|\nabla \Phi_\varepsilon (G_k(\wn))|^p+ c\eps^{q-p+1-\ti}\iint_{{Q_t}}|\nabla \Phi_\varepsilon (G_k(\wn))|^p.
\end{align*}

As far as the $B$ term is concerned, we consider again both the cases in which either $\gi\ge \si-1$ or $\gi<\si-1$. \\
If   $\gi\ge \si-1$, we can estimate $B$ as 
\begin{align*}
B
&\le \bc\iint_{{\set{\wn>k}}} f\chi_{\set{f>k^\gi}}\wn^{-\gi}\pare{\eps+\Gk{\wn}}^{\si-1} \\
&\le \bc\iint_{{\set{\wn>k}}} f\chi_{\set{f>k^\gi}}\wn^{\si-1-\gi}\\
&\le \frac{\bc}{k^{\gi-\si+1}}\iint_{{Q_t}} f\chi_{\set{f>k^\gi}},
\end{align*}
and then 
\[
B\le c_2\norm{f\chi_{\{f>k^\gi\}}}_{L^1(Q_T)}.
\]
Now, let $\gi<\si-1$ and observe that 
\begin{equation*} 
(k+x)^{-\gi}\vp_{\eps}(x)\le c\,\Phi_\eps(x)^{\frac{\sigma-1-\gi}{\b}}
\end{equation*}
for some $c>0$ independent from $\eps$ and for  $k\ge \eps^{-\frac{(\gi(2-\si)-p(\si-1))_+}{\gi(\si+p-2)}}$. Then, this estimate and two applications of H\"older's inequality with $(m,m')$ and $(r,r')$ imply that we can deal with $B$ as follows:
\begin{align*}
B &\le c\iint_{\{ f>k^\gi\}}f\, \Phi_\eps(\Gk{\wn})^{\frac{\sigma-1-\gi}{\b}} \\
&\le c\norm{f\chi_{\set{f>k^\gi}}}_{L^r(0,T;L^m(\Omega))} \|\Phi_\varepsilon(G_k(\wn))\|_{L^{r'\frac{\sigma-1-\gi}{\b}}(0,t;L^{m'\frac{\sigma-1-\gi}{\b}}(\Omega))}^{\frac{\sigma-1-\gi}{\b}}.
\end{align*}
We apply the inequality \eqref{disGN} in Theorem \ref{teoGN} to the function $\Phi_\vare(\Gk{\wn})$ in the space
\[
L^\infty(0,T;L^\frac{\si}{\b}(\Omega))\cap L^p(0,T;W^{1,p}_0(\Omega)),
\]
so that, recalling \eqref{F1} too, 
we get
\begin{align*}
\ds
B&\le c\|  f\chi_{\{ f>k^\gi\}}\|_{L^r(0,T;L^m(\Omega))} \|\Phi_\varepsilon(G_k(\wn))\|_{L^{a}(0,t;L^{b}(\Omega))}^{\frac{\sigma-1-\gi}{\b}}\\
&\le c\|\eps+G_k(\wn)\|_{L^{\infty}(0,t;L^{\si}(\Omega))}^{\b(a-p)}\iint_{{Q_t}}|\nabla \Phi_\varepsilon (G_k(\wn))|^p
+ c_3\norm{f\chi_{\set{f>k^\gi}}}_{L^r(0,T;L^m(\Omega))}^{\frac{a\b}{a\b-(\si-1-\gi)}}\\
&
\le c_4\|G_k(\wn)\|_{L^{\infty}(0,t;L^{\si}(\Omega))}^{\b(a-p)}\iint_{{Q_t}}|\nabla \Phi_\varepsilon (G_k(\wn))|^p
+
c\eps^{\b(a-p)}\iint_{{Q_t}}|\nabla \Phi_\varepsilon (G_k(\wn))|^p 
+ c_3\norm{f\chi_{\set{f>k^\gi}}}_{L^r(0,T;L^m(\Omega))}^{\frac{a\b}{a\b-(\si-1-\gi)}},
\end{align*}
where $(a,b)$ satisfies \eqref{ab}, i.e.
\[
	b\ge m'\frac{\si-1-\gi}{\b}\q\t{and}\q a\ge r'\frac{\si-1-\gi}{\b}.
\]
Analogously to the proof of Lemma \ref{sapp}, we set $\de_0$ and $\eps$ such that
\begin{equation*}\label{grad}
c\eps^{q-p+1-\ti} + c_1(\si\de_0)^{\frac{p-q}{N}}+c\eps^{\b(a-p)} +c_4 (\si\de_0)^{\frac{\b(a-p)}{\si}}\le \frac{\al(\si-1)}{2},
\end{equation*}
and we also set $\ok$ large enough such that, for all $k\ge\ok$, we have 
\begin{equation}\label{cond}
	\begin{cases}
	\ds
\frac{2^{\si-1}}{\si} 
\norm{G_k(u_0)}_{L^\si(\Omega)}^\sigma+c_2\norm{f\chi_{\set{f>k^\gi}}}_{L^1(Q_T)} <\frac{\de_0}{4}&\t{if }\gi\ge\si-1 ,\\[3mm]
	\ds
\frac{2^{\si-1}}{\si} \norm{G_k(u_0)}_{L^\si(\Omega)}^\sigma+c_3\norm{f\chi_{\set{f>k^\gi}}}_{L^r(0,T;L^m(\Omega))}^{\frac{a\b}{a\b-(\si-1-\gi)}}
<\frac{\de_0}{4}&\t{if }\gi<\si-1,
\end{cases}
	\end{equation}
We also define 
	\begin{equation*}\label{tstar2}
	T^*=\sup \set{\tau\in [0,T]: \,\,  \frac{1}{\si}\|G_k(\wn(s))\|_{L^{\sigma}(\Omega)}^{\sigma}\le\delta_0\q\forall s\le \tau } \q\forall k\ge\ok.
	\end{equation*}
Now let us observe that
\begin{equation}\label{new1}
\begin{aligned}
\Theta_\varepsilon(G_k(\wn(t)))= 
\frac{(\eps+ G_k(w_n))^\sigma}{\sigma} - \frac{\eps^\sigma}{\sigma} -\eps^{\sigma-1}G_k(w_n) 
&\ge 
\frac{G_k(w_n)^\sigma}{\sigma} - \frac{\eps^\sigma}{\sigma} -\eps^{\sigma-1}G_k(w_n)
\\
&\ge 
\frac{G_k(w_n)^\sigma}{2\sigma}-\frac{\eps^\si}{\si}\pare{1+(\si-1)2^\frac{1}{\si-1}},
\end{aligned}
\end{equation}
and that
\begin{align}\label{new2}
|\nabla \Phi_\varepsilon(G_k(\wn))|^p&\ge  |\nabla \Phi_1(G_k(\wn))|^p= |\N G_k(\wn)|^p(1+G_k(\wn))^{\sigma-2}.
\end{align}
We now use the estimates on the terms A and B, as well as \eqref{new1} and \eqref{new2}, in \eqref{main}. We thus obtain that
\begin{align*}
	\frac{1}{2\si}\int_{\Omega} G_k(\wn(t))^{\sigma} + \frac{\al(\si-1)}{2}\iint_{Q_t}|\N \Phi_\varepsilon(G_k(\wn))|^p \nonumber
&\le \frac{2^{\si-1}}{\si}\norm{G_k(u_0)}_{L^\si(\Omega)}^\sigma + c_3\norm{f\chi_{\set{f>k^\gi}}}_{L^r(0,T;L^m(\Omega))}^{\frac{a\b}{a\b-(\si-1-\gi)}} \\
&+
\frac{\eps^\si|\Omega|}{\si}\pare{2^{\si-1}+1+(\si-1)2^\frac{1}{\si-1}}
\end{align*}
 if $\gi<\si-1$, and 
\begin{align*}
	\frac{1}{2\si}\int_{\Omega} G_k(\wn(t))^{\sigma} + \frac{\al(\si-1)}{2}\iint_{Q_t}|\N \Phi_\varepsilon(G_k(\wn))|^p \nonumber
&\le \frac{2^{\si-1}}{\si}\norm{G_k(u_0)}_{L^\si(\Omega)}^\sigma + c_2\norm{f\chi_{\set{f>k^\gi}}}_{L^1(Q_T)}\\
&+\frac{\eps^\si|\Omega|}{\si}\pare{2^{\si-1}+1+(\si-1)2^\frac{1}{\si-1}}
\end{align*}
if  $\gi\ge\si-1$.\\
Now if we suppose that $T^*<T$ and we evaluate the above inequalities for $t=T^*$, then we get in both cases
\[
\frac{\de_0}{2}\le \frac{\de_0}{4}+\frac{\eps^\si|\Omega|}{\si}\pare{2^{\si-1}+1+(\si-1)2^\frac{1}{\si-1}},
\]
which gives a contradiction if one considers $\eps$ suitably small. Note that taking $\eps$ smaller makes $k$ grow, so \eqref{cond} still holds. \\
Hence,  
\begin{align*}
&\sup_{t\in (0,T)}\int_{\Omega} G_k(\wn(t))^\sigma + \iint_{Q_T} |\N G_k(\wn)|^p(1+G_k(\wn))^{\sigma-2} \le M,
\end{align*}
where $M$ is positive and independent of $n$. 
We recover \eqref{disGk2infinite} recalling the relation between $\wn$ and $\un$, and this concludes the proof of Case i).
\\

\noindent
\textit{Case ii)}\\
\noindent
This case can be dealt with exactly as before, except for the A term in \eqref{main}. Thus, we come back to A and estimate again with  H\"older's inequality with indices $\left(\frac{p}{q},\frac{p^*}{p-q},\frac{N}{p-q}\right)$ and  Sobolev's embedding, getting
\begin{align*}
A&\le c\int_0^t\norm{\pare{\eps+\Gk{\wn}}^{\frac{N(q-p+1-\ti)}{p-q}}}_{L^{1}(\Omega)}^{\frac{p-q}{N}}\ \|\nabla \Phi_\varepsilon (G_k(\wn))\|_{L^p(\Omega)}^p.
\end{align*}
We now take advantage of the fact that $\si>N(q-p+1-\ti)/(p-q)$, and apply again the H\"older inequality with $\pare{\frac{\si(p-q)}{N(q-p+1-\ti)},\frac{\si(p-q)}{\si(p-q)-N(q-p+1-\ti)}}$, so that
\begin{align*}
A&\le  c|\Omega|^{\frac{\si(p-q)-N(q-p+1-\ti)}{N\si}}\|\eps+\Gk{\wn}\|_{L^\infty(0,t;L^{\sigma}(\Omega))}^{q-p+1-\ti}\iint_{{Q_t}}|\nabla \Phi_\varepsilon (G_k(\wn))|^p.
\end{align*}
The inequality above allows us to reason as in the case i), of course, with slight changes.
\end{proof}

\medskip

\begin{remark}\label{unp1}
Note that, also in this case, $\un^{p\b\frac{N+\frac{\si}{\b}}{N}}$ (recall $\b=\frac{\si+p-2}{p}$) is uniformly bounded with respect to $n$ in $L^1(Q_T)$ and that  $p\b\frac{N+\frac{\si}{\b}}{N}>p-1$.
Indeed,
$$\ds
|\nabla ((1+ G_k(\un))^{\b-1} G_k(\un))|^p \le c|\N G_k(\un)|^p(1+G_k(\un))^{\sigma-2},
$$	
which implies, thanks to Lemma \ref{sappinfinite}, that 
$(1+G_k(\un))^{\beta-1}G_k(\un)$ is bounded with respect to $n\in\NN$ in
\[
L^\infty(0,T;L^\frac{\si}{\b}(\Omega))\cap L^p(0,T;W^{1,p}_0(\Omega))\q\forall k\ge \ok.
\]
Then one concludes by Theorem \ref{teoGN}.
\end{remark}
 
For this section, we will refer to $\ok$ as the one defined in Lemma  \ref{sappinfinite}. Without loss of generality, we assume that $\ok>1$. 

\medskip

\begin{corollary}\label{todoinfinite}
Under the assumptions of Lemma \ref{sappinfinite},  we have
\begin{equation*} 
\iint_{Q_T}|\nabla \Gk{\un}|^b   \le M\q\forall k\ge \ok,
\end{equation*}
where
$$\ds b=p-\frac{N(2-\si)}{N+\si},$$ 
and $M$ is a positive constant which does not depend on $n$.
\end{corollary}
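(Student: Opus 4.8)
The plan is to bootstrap the weighted gradient estimate of Lemma~\ref{sappinfinite} into an unweighted $L^b$ bound by a single application of H\"older's inequality, trading the weight $(1+\Gk{\un})^{\sigma-2}$ against a power of $\un$ that is already controlled by Remark~\ref{unp1}. We are in the regime $1<\sigma<2$, so $2-\sigma>0$, the weight $(1+\Gk{\un})^{\sigma-2}$ is bounded above by $1$, and $p-b=\frac{N(2-\sigma)}{N+\sigma}\in(0,1)$ (since $\sigma>1>\frac{N}{N+1}$); in particular $0<b<p$, so $\left(\tfrac pb,\tfrac p{p-b}\right)$ is an admissible pair of conjugate H\"older exponents.

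First I would write, pointwise on $Q_T$,
\[
|\nabla \Gk{\un}|^{b}=\Big(|\nabla \Gk{\un}|^{p}(1+\Gk{\un})^{\sigma-2}\Big)^{b/p}\,(1+\Gk{\un})^{-b(\sigma-2)/p},
\]
and then apply H\"older's inequality with the exponents $\left(\tfrac pb,\tfrac p{p-b}\right)$ to get
\[
\iint_{Q_T}|\nabla \Gk{\un}|^{b}\le\Big(\iint_{Q_T}|\nabla \Gk{\un}|^{p}(1+\Gk{\un})^{\sigma-2}\Big)^{b/p}\Big(\iint_{Q_T}(1+\Gk{\un})^{b(2-\sigma)/(p-b)}\Big)^{(p-b)/p}.
\]
The first factor is bounded, uniformly in $n$ and for all $k\ge\ok$, by Lemma~\ref{sappinfinite}. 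For the second factor the key is the arithmetic identity: since $p-b=\tfrac{N(2-\sigma)}{N+\sigma}$ one has $\tfrac{b(2-\sigma)}{p-b}=\tfrac{b(N+\sigma)}{N}$, and a direct computation using $p\b=\sigma+p-2$ shows $\tfrac{b(N+\sigma)}{N}=p\b\,\tfrac{N+\sigma/\b}{N}$, i.e.\ exactly the space-time Lebesgue exponent for which $\un$ --- and hence $\Gk{\un}\le\un$ --- is uniformly bounded by Remark~\ref{unp1}. Since $(1+\Gk{\un})^{b(2-\sigma)/(p-b)}\le c\big(1+\Gk{\un}^{b(2-\sigma)/(p-b)}\big)$, the second factor is therefore bounded uniformly in $n$ as well, and the conclusion follows with $M$ independent of $n$ for every $k\ge\ok$.

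There is no real obstacle here: the whole content is the numerical identity that matches $\tfrac{b(2-\sigma)}{p-b}$ with the exponent supplied by Remark~\ref{unp1}, which is true by the very definitions $b=p-\tfrac{N(2-\sigma)}{N+\sigma}$ and $\b=\tfrac{\sigma+p-2}{p}$; the only bookkeeping point is that all the bounds hold for $k\ge\ok$, with $\ok$ the index produced by Lemma~\ref{sappinfinite}, and that one invokes Remark~\ref{unp1} through $\Gk{\un}\le\un$. Alternatively, one could bypass Remark~\ref{unp1} and apply the Gagliardo--Nirenberg inequality of Theorem~\ref{teoGN} directly to $(1+\Gk{\un})^{\b-1}\Gk{\un}$ --- which lies in $L^\infty(0,T;L^{\sigma/\b}(\Omega))\cap L^p(0,T;W^{1,p}_0(\Omega))$ by Lemma~\ref{sappinfinite} --- to obtain the required integrability of $\Gk{\un}$ and then close the estimate in the same way.
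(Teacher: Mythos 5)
Your proof is correct and follows essentially the same route as the paper: the same H\"older decomposition with exponents $\left(\tfrac pb,\tfrac p{p-b}\right)$, the first factor controlled by Lemma~\ref{sappinfinite}, and the second factor controlled by Remark~\ref{unp1} via the identity $\tfrac{b(2-\sigma)}{p-b}=p\beta\tfrac{N+\sigma/\beta}{N}$. The only cosmetic difference is that the paper derives the value of $b$ from the matching condition, while you verify that the stated $b$ satisfies it.
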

\begin{proof}
Let us observe that 
\begin{align}\label{stimatodo}
\iint_{Q_T}|\N \Gk{\un}|^b  &\le \left( \iint_{Q_T}|\N \Gk{\un}|^p(1+\Gk{\un})^{\si-2}   \right)^{\frac{b}{p}} \left(\iint_{Q_T} (1+\Gk{\un})^{\frac{b(2-\si)}{p-b}}  \right)^{\frac{p-b}{p}}.
\end{align}
It follows from Lemma \ref{sappinfinite} that the first integral is uniformly bounded with respect to $n\in\NN$ for all $k\ge \ok$. Moreover, by Remark \ref{unp1}, $\un^{p\b\frac{N+\frac{\si}{\b}}{N}}$ is uniformly bounded with respect to $n$ in $L^1(Q_T)$.
\\
Hence, the right-hand of \eqref{stimatodo} is bounded by a constant which does not depend on $n$ if 
 $$\frac{b(2-\si)}{p-b}=p\b\frac{N+\frac{\si}{\b}}{N},$$
 which gives  
\begin{equation*} 
 b=p-\frac{N(2-\si)}{N+\si}.
\end{equation*}
\end{proof}

 \medskip

\begin{remark}[On the  parameter $b$] \label{b}
Note that, in every case, the  value of $b$ in Corollary \ref{todoinfinite} satisfies $b>p-1$.
\end{remark}

\medskip

\begin{corollary}\label{L1infinito}
Under the assumptions of Lemma \ref{sappinfinite}, we have 
\begin{equation*}
|||\N \un|^q \un^{-\ti}\chi_{\set{\un>k}}||_{L^{\rho}(Q_T)}\le M\q\forall k>\ok,
\end{equation*}
for some $1<\rho<\frac{p}{q}$ where the constant $M$ does not depend on $n$.
\end{corollary}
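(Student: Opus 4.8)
The strategy is to bound the left‑hand side by H\"older's inequality, combining the weighted gradient estimate $\iint_{Q_T}|\N \Gk{\un}|^p(1+\Gk{\un})^{\si-2}\le M$ of Lemma~\ref{sappinfinite} with the uniform bound $\iint_{Q_T}\un^{s_0}\le M$ of Remark~\ref{unp1}, where $s_0=p\b\frac{N+\si/\b}{N}=\frac{N(\si+p-2)+p\si}{N}$. First I would fix $k>\ok$ (recall $\ok>1$ here) and note that on $\set{\un>k}$ one has $\N \un=\N \Gk{\un}$ and $\un=k+\Gk{\un}\ge 1+\Gk{\un}$, so that, since $\ti\ge0$,
\[
\big(|\N \un|^q\un^{-\ti}\chi_{\set{\un>k}}\big)^{\rho}\le |\N \Gk{\un}|^{q\rho}(1+\Gk{\un})^{-\ti\rho}\q\t{a.e. in }Q_T,
\]
for every $\rho\in(1,p/q)$, to be chosen at the end.

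Since $\si-2<0$, I would split, for such a $\rho$,
\[
|\N \Gk{\un}|^{q\rho}(1+\Gk{\un})^{-\ti\rho}=\Big(|\N \Gk{\un}|^p(1+\Gk{\un})^{\si-2}\Big)^{\frac{q\rho}{p}}\,(1+\Gk{\un})^{\left(\frac{(2-\si)q}{p}-\ti\right)\rho},
\]
and apply H\"older's inequality with the conjugate exponents $\left(\frac{p}{q\rho},\frac{p}{p-q\rho}\right)$, admissible because $q\rho<p$. By Lemma~\ref{sappinfinite} the first factor integrates to a constant, whence
\[
\iint_{Q_T}\big(|\N \un|^q\un^{-\ti}\chi_{\set{\un>k}}\big)^{\rho}\le M^{\frac{q\rho}{p}}\left(\iint_{Q_T}(1+\Gk{\un})^{\mu(\rho)}\right)^{\frac{p-q\rho}{p}},\q \mu(\rho):=\left(\frac{(2-\si)q}{p}-\ti\right)\frac{p\rho}{p-q\rho}.
\]
Since $\Gk{\un}\le\un$, as soon as $\mu(\rho)\le s_0$ the remaining integral is controlled by $c\,(|Q_T|+\iint_{Q_T}\un^{s_0})\le M$, uniformly in $n$, by Remark~\ref{unp1}. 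Thus everything reduces to choosing $\rho\in(1,p/q)$ with $\mu(\rho)\le s_0$.

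This exponent bookkeeping is the only delicate point; all the rest is H\"older's inequality plus the two a priori estimates already at hand, so it is the step I expect to be the main obstacle. If $\frac{(2-\si)q}{p}\le\ti$ then $\mu(\rho)\le0<s_0$ for every admissible $\rho$ and any choice works. Otherwise $\rho\mapsto\mu(\rho)$ is continuous and strictly increasing on $(1,p/q)$, so it suffices to verify the strict inequality $\mu(1)<s_0$, namely
\[
\frac{(2-\si)q-\ti p}{p-q}<\frac{N(\si+p-2)+p\si}{N}.
\]
In the range \eqref{Q1}, substituting $\si=\frac{N(q-p+1-\ti)}{p-q}$ (equivalently $N+\si=\frac{N(1-\ti)}{p-q}$), a short computation cancelling the common summand $-(2-\si)$ and the common factor $\frac{p(N+\si)}{N}$ reduces this to $\frac{2-\si-\ti}{1-\ti}<1$, i.e.\ to $\si>1$, which is assumed. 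In the range \eqref{Q2}, the inequality rewrites as $N(2-\si-\ti)<(N+\si)(p-q)$, and since there $p-q\ge\frac{N(1-\ti)}{N+1}$ it follows from $(N+\si)(1-\ti)-(N+1)(2-\si-\ti)=(\si-1)(N+2-\ti)>0$, which holds because $\si>1$ and $\ti<1$. Fixing such a $\rho$ (for instance $\rho=1+\eps$ with $\eps>0$ small, so that $\rho<p/q$ automatically) gives the assertion with a constant $M$ independent of $n$.
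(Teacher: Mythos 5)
Your proof is correct and takes essentially the same route as the paper's: the same H\"older split of $|\nabla \un|^{q\rho}\un^{-\ti\rho}$ into the weighted energy of Lemma~\ref{sappinfinite} and the $L^{p\b(N+\si/\b)/N}(Q_T)$ bound from Remark~\ref{unp1}. The only difference is that the paper fixes the extremal $\rho$ for which $\mu(\rho)$ equals $p\b\frac{N+\si/\b}{N}$ and merely asserts $\rho>1$, whereas you verify the equivalent strict inequality $\mu(1)<p\b\frac{N+\si/\b}{N}$ in both ranges \eqref{Q1} and \eqref{Q2} and then take $\rho$ near $1$, which also makes the side constraint $\rho<p/q$ automatic.
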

\begin{proof}
We take advantage of \eqref{disGk2infinite} and we estimate as
\begin{equation}
\begin{aligned}\label{stimalinf}
\iint_{\set{\un>k}}\frac{|\N \un|^{q\rho}}{\un^{\ti\rho}}
&\le \iint_{Q_T}|\N \Gk{\un}|^{q\rho}\frac{(1+\Gk{\un})^{\frac{q\rho(\si-2)}{p}}}{(1+\Gk{\un})^{\frac{q\rho(\si-2)}{p}+\ti\rho}}\\
&\le \pare{
\iint_{Q_T}\frac{|\N \Gk{\un}|^{p}}{(1+\Gk{\un})^{2-\si}}
}^{\frac{q\rho}{p}}
\pare{\iint_{Q_T}(1+\Gk{\un})^{\frac{p}{p-q\rho}\pare{\frac{q\rho(2-\si)}{p}-\ti\rho}}}^\frac{p-q\rho}{p}.
\end{aligned}
\end{equation}
Once again, from Remark \ref{unp1}, the first integral in \eqref{stimalinf}  is uniformly bounded with respect to $n\in\NN$  for all $k\ge \ok$. The same remark implies that $\un^{p\b\frac{N+\frac{\si}{\b}}{N}}$ is uniformly bounded with respect to $n$ in $L^1(Q_T)$. Hence, the right-hand of \eqref{stimalinf} is bounded with respect to $n$ if one requires
\[
\frac{p}{p-q\rho}\pare{\frac{q\rho(2-\si)}{p}-\ti\rho}=p\b\frac{N+\frac{\si}{\b}}{N},
\]
namely
\[
\rho=\frac{N(\si+p-2)+p\si}{N(q-\ti)+q\si}.
\]
Note that under our assumptions $\rho>1$.
\end{proof}

\medskip

\begin{lemma}\label{tk2}

Under the assumptions of Lemma \ref{sappinfinite}, let us also assume that \eqref{g1} and \eqref{h1} hold with $\tz\le 1$ and $\gz\le 1$. Then
\begin{equation*} 
\iint_{Q_T}|\nabla T_{k}(\un)|^p      \le M\qq\forall k>0,
\end{equation*} 
where $M$ is a positive constant which does not depend on $n$.  
\end{lemma}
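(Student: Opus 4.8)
The plan is to follow the proof of Lemma \ref{tk} almost verbatim, the only structural change being that the finite–energy ingredient (Corollary \ref{todo}) must be replaced by its infinite–energy counterparts obtained above, namely Corollary \ref{todoinfinite} and, crucially, Corollary \ref{L1infinito}. As in Lemma \ref{tk}, it suffices to bound $|\N \Tk{\un}|$ in $L^p(Q_T)$, uniformly in $n$, for every $k>\ok$ (with $\ok$ as in Lemma \ref{sappinfinite}, which we may assume larger than $s_2$): indeed, for $0<j\le k$ one has $|\N T_j(\un)|\le |\N \Tk{\un}|$ a.e., so the estimate for all $k>0$ follows by monotonicity. Throughout, $M$ may depend on $k$ but never on $n$.

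First I would use $\Tk{\un}$, $k>\ok$, as a test function in \eqref{eqn}. Since $H_n\le H$ and $\Theta_k(s):=\int_0^sT_k(v)\,dv\ge 0$, dropping the nonnegative parabolic term at time $T$ and invoking \eqref{H} gives
\[
\al\iint_{Q_T}|\N \Tk{\un}|^p\le \iint_{Q_T}\big(g(\un)|\N \un|^q+h(\un)f\big)\Tk{\un}+\int_{\Omega}\Theta_k(u_0).
\]
Splitting the right-hand side over $\{\un\le k\}$ and $\{\un>k\}$, using $\N \Tk{\un}=\N\un$ on $\{\un\le k\}$ and \eqref{g2}, \eqref{h2} on $\{\un>k\}\subseteq\{\un>s_2\}$, it is bounded by
\begin{align*}
&\Big(\max_{[0,k]}s\,g(s)\Big)\iint_{\{\un\le k\}}|\N \Tk{\un}|^q
+\Big(\max_{[0,k]}s\,h(s)+k\sup_{[k,+\infty)}h(s)\Big)\iint_{Q_T}f\\
&\qquad+\bc\, k\iint_{\{\un>k\}}\frac{|\N \un|^q}{\un^{\ti}}+\int_{\Omega}\Theta_k(u_0)=:A+B+C+\int_{\Omega}\Theta_k(u_0),
\end{align*}
where the three suprema are finite precisely because $\tz\le 1$ and $\gz\le 1$ force $s\mapsto s\,g(s)$ and $s\mapsto s\,h(s)$ to stay bounded near the origin (here \eqref{g1}, \eqref{h1} enter), while continuity away from $0$ and \eqref{h2} handle the tail.

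Then I would estimate the three pieces. For $A$: since $q<p$, Young's inequality gives $A\le\frac{\al}{2}\iint_{Q_T}|\N \Tk{\un}|^p+c(k)$, and the gradient term is absorbed into the left-hand side. For $B$: $f\in L^1(Q_T)$ in all cases (either \eqref{F3}, or $L^r(0,T;L^m(\Omega))\hookrightarrow L^1(Q_T)$ since $\Omega$ and $(0,T)$ are bounded), so $B\le c(k)\|f\|_{L^1(Q_T)}$. For $C$: this is the genuinely new point — one \emph{retains} the weight $\un^{-\ti}$ and applies Corollary \ref{L1infinito}, which provides some $\rho>1$ with $|\N \un|^q\un^{-\ti}\chi_{\{\un>k\}}\in L^{\rho}(Q_T)$ uniformly in $n$; hence by Hölder $C\le \bc\, k\,|Q_T|^{1-1/\rho}\,\||\N \un|^q\un^{-\ti}\chi_{\{\un>k\}}\|_{L^{\rho}(Q_T)}\le c(k)M$. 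Finally $\int_{\Omega}\Theta_k(u_0)\le k\|u_0\|_{L^1(\Omega)}<\infty$ because $u_0\in L^{\sigma}(\Omega)$ with $\sigma>1$. Collecting the estimates yields $\iint_{Q_T}|\N \Tk{\un}|^p\le M$ for every $k>\ok$, and monotonicity extends it to all $k>0$.

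The main obstacle is exactly the term $C$. In the finite–energy regime of Lemma \ref{tk} one could afford the crude bound $\bc k^{1-\ti}\iint_{\{\un>k\}}|\N\un|^q$ and conclude because $|\N G_k(\un)|\in L^p(Q_T)$ with $q<p$ (Corollary \ref{todo}). Here Corollary \ref{todoinfinite} only yields $|\N G_k(\un)|\in L^{b}(Q_T)$ with $b=p-\tfrac{N(2-\si)}{N+\si}$, and $b$ need not exceed $q$, so that argument collapses; the decay weight $\un^{-\ti}$ at infinity must be kept, and the sharp tool for that is the weighted $L^\rho$-estimate of Corollary \ref{L1infinito}. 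Once this is granted, the proof is a routine repetition of Lemma \ref{tk} with the obvious bookkeeping changes.
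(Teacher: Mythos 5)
Your proof is correct and follows exactly the route the paper intends: the paper's own "proof" of Lemma~\ref{tk2} is just the one-line remark that the argument is very close to Lemma~\ref{tk} but uses Corollary~\ref{L1infinito}, and you have correctly spelled this out, identifying that the only genuine change is estimating the tail term $C$ via the weighted $L^\rho$-bound $\|\,|\N\un|^q\un^{-\ti}\chi_{\{\un>k\}}\|_{L^\rho(Q_T)}\le M$ (with $\rho>1$) rather than by dropping the weight and invoking the finite-energy Corollary~\ref{todo}. Your closing explanation of why the crude bound $C\le\bc k^{1-\ti}\iint_{\{\un>k\}}|\N\un|^q$ would fail here (Corollary~\ref{todoinfinite} only gives $|\N G_k(\un)|\in L^b(Q_T)$ with $b$ possibly below $q$) is a helpful clarification that the paper leaves implicit.
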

\begin{proof}
The proof takes advantage of Corollary \ref{L1infinito} and it is the very close to the one of Lemma \ref{tk}.
\end{proof}

\medskip

Now we are in position to show the following global bound:

\begin{corollary}\label{todo2}
Under the assumptions of Lemma \ref{tk2}, let $\un$ be a solution to \eqref{eqn}. Then
\begin{equation*} 
\iint_{Q_T}|\nabla ((1+ \un)^{\b-1} \un)|^p \le M,
\end{equation*}
where $M$ is a positive constant which does not depend on $n$.
\end{corollary}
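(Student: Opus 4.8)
The plan is to obtain the bound by gluing together the two a priori estimates already established: the weighted far-field estimate \eqref{disGk2infinite} of Lemma~\ref{sappinfinite} and the near-origin estimate of Lemma~\ref{tk2}. Write $\Psi(s)=(1+s)^{\b-1}s$, so that $\nabla((1+\un)^{\b-1}\un)=\Psi'(\un)\,\N\un$ with $\Psi'(s)=(1+s)^{\b-2}(\b s+1)$. The function $\Psi$ is engineered precisely so that $\Psi'$ behaves like a constant near $s=0$ and like $(1+s)^{\b-1}$ as $s\to+\infty$: since here $1<\si<2$, hence $\b=\tfrac{\si+p-2}{p}<1$, one has $\Psi'(s)\le c(\ok,\b)$ on $[0,\ok]$ and $\Psi'(s)\le(1+s)^{\b-1}$ for all $s\ge0$, while $p(\b-1)=\si-2$. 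This dichotomy matches exactly the two estimates at hand, which is the whole point of replacing $\un^\b$ (used in the finite energy Lemma~\ref{pot}) by $(1+\un)^{\b-1}\un$.

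I would then split $Q_T=\{\un\le\ok\}\cup\{\un>\ok\}$, with $\ok$ the integer of Lemma~\ref{sappinfinite}. On $\{\un\le\ok\}$ one has $\N\un=\N T_{\ok}(\un)$ a.e.\ and $\Psi'(\un)\le c(\ok,\b)$, so
\[
\iint_{\{\un\le\ok\}}|\nabla ((1+\un)^{\b-1}\un)|^p\le c(\ok,\b)^p\iint_{Q_T}|\N T_{\ok}(\un)|^p,
\]
which is bounded independently of $n$ by Lemma~\ref{tk2}. On $\{\un>\ok\}$ one has $\N\un=\N G_{\ok}(\un)$ a.e., $\Psi'(\un)\le(1+\un)^{\b-1}$, and $1+\un=1+\ok+G_{\ok}(\un)\ge 1+G_{\ok}(\un)$, so that (using $\si-2<0$) $(1+\un)^{\si-2}\le(1+G_{\ok}(\un))^{\si-2}$; hence
\[
\iint_{\{\un>\ok\}}|\nabla ((1+\un)^{\b-1}\un)|^p\le\iint_{Q_T}|\N G_{\ok}(\un)|^p(1+G_{\ok}(\un))^{\si-2},
\]
which is bounded uniformly in $n$ by \eqref{disGk2infinite}. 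Adding the two contributions yields the claim with $M$ depending only on $\ok$, $\b$ and the data.

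I do not expect a genuine obstacle here: the substantive work — producing the weighted energy bound with weight $(1+G_{\ok}(\un))^{\si-2}$ and the uniform $L^p$-bound on $\N T_{\ok}(\un)$ — is already done in Lemmas~\ref{sappinfinite} and~\ref{tk2}, and the corollary is essentially bookkeeping. The only points deserving a line of care are the elementary monotonicity of $s\mapsto(1+s)^{\si-2}$ invoked on $\{\un>\ok\}$, and checking that all constants introduced depend on the fixed integer $\ok$, on $\b$, and on the data, but never on $n$. The chain rule is legitimate since $\un\in L^p(0,T;W^{1,p}_0(\Omega))\cap L^\infty(Q_T)$ and $\Psi$ is smooth, so $(1+\un)^{\b-1}\un\in L^p(0,T;W^{1,p}_0(\Omega))$.
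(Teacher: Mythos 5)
Your proof is correct and follows essentially the same argument as the paper: the paper first applies the chain rule globally to reduce to $c\iint|\N\un|^p(1+\un)^{\si-2}$ and then splits into $\{\un>\ok\}$ and $\{\un\le\ok\}$, whereas you split first and then estimate $\Psi'$ on each piece, but the two steps are the same in substance. The key observations — $\Psi'(s)\le(1+s)^{\b-1}$ (using $\b<1$), $p(\b-1)=\si-2$, and the monotonicity of $s\mapsto(1+s)^{\si-2}$ — match the paper's computation exactly, and you correctly invoke Lemmas~\ref{sappinfinite} and~\ref{tk2} for the two contributions.
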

\begin{proof}
We estimate
\begin{align*}
\ds
\iint_{Q_T}|\nabla ((1+ \un)^{\b-1} \un)|^p &\le c\iint_{Q_T}|\N \un|^p(1+\un)^{\sigma-2} \\
&
\le c\iint_{Q_T\cap \{\un>\ok\}}|\N G_{\ok}(\un)|^p(1+G_{\ok}(\un))^{\si-2} +c\iint_{Q_T\cap\{\un\le \ok\}}|\N T_{\ok}(\un)|^p ,
\end{align*}
and the right-hand side is bounded with respect to $n$ thanks to Lemmas \ref{sappinfinite} and \ref{tk2}. The proof is concluded.
\end{proof}

\subsection{Proof of Theorem \ref{teo} }

\begin{lemma}\label{propstimaL1}
Under the assumptions of Lemma \ref{tk} or Lemma \ref{tk2}, let us also assume that \eqref{A3} holds. Then
	\begin{equation}\label{stimaL1}
	\iint_{Q_T} H_n(t,x,\un, \N \un)\varphi \le M,
	\end{equation}
	for any nonnegative $\vp\in C^1_c([0,T)\times \Omega)$ and where the constant $M$  does not depend on $n$.\\ Moreover, there exists a locally integrable weakly differentiable function $u$ such that 
\[
\un\to u\q\t{strongly in}\q L^\mu(Q_T)\q\t{for any}\q\mu<\si,
\]
and 
\[
\N\un\to\N u\q\t{a.e. in}\q Q_T,
\]
 as $n\to \infty$. 
\end{lemma}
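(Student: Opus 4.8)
The plan is to split the statement into its two parts: the uniform bound \eqref{stimaL1} is a direct consequence of the equation together with the a priori estimates of the previous subsections, whereas the convergences follow from a by‑now classical time‑compactness plus monotonicity argument. To obtain \eqref{stimaL1} I would take a nonnegative $\vp\in C^1_c([0,T)\times\Omega)$ and use it as test function in \eqref{eqn}. Since $\un\in L^\infty(Q_T)$ we have $(\un)_t\in L^{p'}(0,T;W^{-1,p'}(\Omega))$ and $\un\in C([0,T];L^2(\Omega))$, so integration by parts in time is legitimate and, using $\vp(T,\cdot)=0$, yields the \emph{identity}
\[
\iint_{Q_T} H_n(t,x,\un,\N\un)\,\vp = -\int_{\Omega} u_{n,0}\,\vp(0) -\iint_{Q_T}\un\,\vp_t + \iint_{Q_T} a(t,x,\un,\N\un)\cdot\N\vp .
\]
Because $H_n\ge 0$, the left–hand side is automatically bounded by the right–hand side, and the three terms there are controlled uniformly in $n$: the first is $\le 0$ since $u_{n,0}\ge 0$; the second is at most $\|\vp_t\|_{L^\infty}\,\|\un\|_{L^1(\operatorname{supp}\vp)}$, which is handled by the $L^\infty(0,T;L^\sigma(\Omega))$ estimate of Lemma \ref{sapp} (resp. Lemma \ref{sappinfinite}); and, by \eqref{A2}, the third is at most $c\,\|\N\vp\|_{L^\infty}$ times the $L^1(\operatorname{supp}\vp)$–norms of $\un^{p-1}$, $|\N\un|^{p-1}$ and $\ell$, all finite thanks to Remark \ref{stimaenergiafinita}/Remark \ref{unp1} (which give $\un$ bounded in a Lebesgue space of exponent larger than $p$), Corollary \ref{todo} and Lemma \ref{tk} in the finite‑energy case, Corollary \ref{todoinfinite}, Lemma \ref{tk2} and Remark \ref{b} in the infinite‑energy case, and $\ell\in L^{p'}(Q_T)$. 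This proves \eqref{stimaL1}; in particular $\{H_n(t,x,\un,\N\un)\}_n$ is bounded in $L^1_{\rm loc}(Q_T)$.

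For the almost everywhere convergence of $\{\un\}_n$ I would write $(\un)_t=-\operatorname{div}a(t,x,\un,\N\un)+H_n$ and observe that, by \eqref{A2} and the estimates just used, $a(t,x,\un,\N\un)$ is bounded in $L^s(Q_T)^N$ for some $s>1$, so that $(\un)_t$ is bounded in $L^s(0,T;W^{-1,s}(\Omega))+L^1_{\rm loc}(Q_T)$. Coupling this with the bound on $\Tk{\un}$ in $L^p(0,T;W^{1,p}_0(\Omega))$ for every $k>0$ (Lemma \ref{tk}, resp. Lemma \ref{tk2}) and a diagonal extraction in $k$, a standard compactness result of Aubin–Simon type gives, along a subsequence, $\un\to u$ a.e. in $Q_T$ for some measurable $u$; passing to the weak limit in the truncations shows $\Tk{u}\in L^p(0,T;W^{1,p}_0(\Omega))$ for every $k$, so $u$ is locally integrable and weakly differentiable (with $\N u:=\N \Tk{u}$ on $\{u<k\}$). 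The uniform $L^\infty(0,T;L^\sigma(\Omega))$ bound (together with the better integrability recorded in Remark \ref{stimaenergiafinita}/Remark \ref{unp1}) makes $\{|\un|^\mu\}_n$ equi‑integrable for every $\mu<\sigma$, so Vitali's theorem turns the a.e. convergence into $\un\to u$ strongly in $L^\mu(Q_T)$ for every $\mu<\sigma$.

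The almost everywhere convergence of the gradients is the core of the lemma, and I would obtain it via the Boccardo–Murat scheme. Fix $k>0$; since $\Tk{\un}\rightharpoonup\Tk{u}$ in $L^p(0,T;W^{1,p}_0(\Omega))$ it suffices to prove $\N\Tk{\un}\to\N\Tk{u}$ a.e. and then let $k\to\infty$ (the set $\{u=+\infty\}$ being null). One tests \eqref{eqn} with $\xi\,\psi\!\left(\Tk{\un}-\Tk{u}^{(\nu)}\right)$, where $\xi\in C^1_c([0,T)\times\Omega)$ is nonnegative, $\psi$ is a suitable bounded odd increasing function with $\psi(0)=0$ absorbing the first–order contributions, and $\Tk{u}^{(\nu)}$ is the Landes time‑regularization of $\Tk{u}$: the time–derivative term is asymptotically nonnegative when $n\to\infty$ then $\nu\to\infty$, the term containing $H_n$ is shown to be negligible in the limit (this is the delicate point, discussed below), and, after adding and subtracting $a(t,x,\un,\N\Tk{u})$ in the elliptic term, one is left with
\[
\limsup_{n\to\infty}\iint_{Q_T}\xi\,\big(a(t,x,\un,\N\Tk{\un})-a(t,x,\un,\N\Tk{u})\big)\cdot\N\big(\Tk{\un}-\Tk{u}\big)\le 0 .
\]
By the strict monotonicity \eqref{A3} this integrand is nonnegative, hence tends to $0$ in $L^1_{\rm loc}(Q_T)$, and the standard pointwise lemma forces $\N\Tk{\un}\to\N\Tk{u}$ a.e.; letting $k\to\infty$ gives $\N\un\to\N u$ a.e. in $Q_T$.

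The main obstacle is exactly the treatment of $H_n$ in the passage to the limit: it carries the superlinear gradient term $g(\un)|\N\un|^q$ and, because $g,h$ blow up at the origin, it is only bounded in $L^1_{\rm loc}$ and may concentrate on the degeneracy set $\{u=0\}$. To handle it one localizes carefully away from $t=T$ and from $\partial\Omega$; away from $\{u=0\}$ one uses that, on $\{\un\ge\delta\}$, the functions $g,h$ are bounded while $|\N\un|^q$ is equi‑integrable (being bounded in $L^{p/q}(Q_T)$, and via Corollary \ref{L1infinito} for the large values of $\un$) and $f$ is a fixed $L^1$ datum; near $\{u=0\}$ one must neutralize the singularity, typically through a local lower bound $\un\ge c_K>0$ on compact subsets $K\Subset(0,T)\times\Omega$ obtained from a Harnack‑type argument, and then absorb the remaining contribution against test functions vanishing with $\Tk{\un}-\Tk{u}$. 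The time–derivative term likewise needs the Landes regularization because $u$ carries no a priori time regularity. By contrast, the bound \eqref{stimaL1} and the equi‑integrability leading to $\un\to u$ in $L^\mu(Q_T)$ are routine once the a priori estimates of the previous subsections are available.
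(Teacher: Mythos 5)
Your handling of the two ``easy'' assertions matches the paper: the bound \eqref{stimaL1} is obtained exactly as you describe, testing \eqref{eqn} with $\vp\ge 0$ and using \eqref{A2} together with Corollary \ref{todo}/Lemma \ref{tk} (resp.\ Corollary \ref{todoinfinite}/Lemma \ref{tk2}), Remark \ref{stimaenergiafinita}/Remark \ref{unp1}, Remark \ref{b}; and the a.e.\ convergence of $\un$ plus Vitali with the $L^\infty(0,T;L^\sigma(\Omega))$ bound give $\un\to u$ in $L^\mu(Q_T)$, $\mu<\sigma$ (the paper cites \cite{Ptrunc} rather than invoking Aubin--Simon by name, but the content is the same).

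The genuine gap is in how you propose to control the singular term $H_n$ near $\{u=0\}$ in the gradient-compactness step. You suggest deriving a local lower bound $\un\ge c_K>0$ on compacts $K\Subset (0,T)\times\Omega$ by a Harnack-type argument and ``absorbing'' the remaining contribution. No such uniform-in-$n$ positivity is available here: $u_0$ is only in $L^\sigma(\Omega)$ and may vanish on open sets, $f$ may vanish, the operator is a degenerate $p$-Laplacian, and the lower-order term has no sign structure forcing $\un$ away from zero; moreover the singularity at $s=0$ is exactly what makes the problem delicate, so a Harnack lower bound would be circular or simply false. The paper does not take this route. Instead it uses the specific test function $\psi_{n,h,\nu}=T_{2k}\bigl(\un-T_h(\un)+T_k(\un)-T_k(u)_\nu\bigr)\vp$ (Landes regularization plus a tail truncation at level $h$), observes that on $\{\un\le\de\}$ one has $|\psi_{n,h,\nu}|\le C\,\un\,\vp$, and therefore
\[
\iint_{\{\un\le\de\}}H_n(t,x,\un,\N\un)\,\psi_{n,h,\nu}\;\le\; C\,\de\iint_{Q_T}H_n(t,x,\un,\N\un)\,\vp\;\le\;C\,\de\,M ,
\]
uniformly in $n,\nu,h$ by \eqref{stimaL1} itself. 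This is the key self-contained trick: the test function degenerates together with $\un$, so the global $L^1_{\rm loc}$ control on $H_n$ suffices and no a priori positivity is needed. On $\{\un>\de\}$ the singular coefficients $g,h$ are bounded and one only needs the equi-integrability of $g(\un)|\N\un|^q$ coming from Corollary \ref{L1infinito}, as you correctly note. Incidentally, the test function you wrote, $\psi(\Tk{\un}-\Tk{u}^{(\nu)})$, already has this vanishing property when $\psi$ is Lipschitz at $0$ (since $\Tk{\un}\le\un$), so your framework can be repaired along the same lines; but the Harnack argument should be dropped. You also omit the verification of the tail condition $\lim_{h\to\infty}\limsup_{n\to\infty}\iint_{\{h<\un<h+k\}}|\N\un|^p=0$, which the paper proves by testing with the piecewise-linear cutoff $S(\un)$ and again invoking Corollary \ref{L1infinito}; without this step the Boccardo--Murat scheme does not close.
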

\begin{proof}
\noindent
\textit{Uniform estimate of $H_n$.}\\
\noindent
By Corollary \ref{todo}, Lemma \ref{tk} for the finite energy case, and
Corollary \ref{todoinfinite}, Lemma \ref{tk2} for the infinite energy one, $u_n$ is bounded in $L^\infty(0,T;L^\sigma(\Omega))$ and $\un^{p-1},\,|\N \un|^{p-1}$ are bounded in $L^{\frac{b}{p-1}}(Q_T)$  with respect to $n$ (see Remarks \ref{stimaenergiafinita}, \ref{unp1}, and \ref{b}). Hence, we take a nonnegative $\varphi\in C^1_c([0,T)\times \Omega)$ as a test function in \eqref{eqn} and, thanks to \eqref{A2}, we deduce that
	\begin{align*}\label{stimal1}
	\iint_{Q_T} H_n(t,x,\un, \N \un)\varphi &\le - \int_{Q_T} \un\varphi_t + c \int_{Q_T} (|\nabla \un|^{p-1}+\un^{p-1}+|\ell|)|\nabla \varphi|\le M,
	\end{align*}
	for some $M$ which does not depend on $n$.\\

\noindent
\textit{Strong convergence of $\un\to u$ in $L^\mu(Q_T)$, $\mu<\si$.}\\
\noindent
 The almost everywhere convergence of $u_n$ towards $u$ in $Q_T$ follows by reasoning as in the proof of \cite[Theorem 2.1]{Ptrunc}. 
The strong convergence of $\un\to u$ in $L^\mu(Q_T)$, $\mu<\si$, simply follows from the uniform boundedness of $\un$ in $L^\infty(0,T;L^\si(\Omega))$.\\

\noindent
\textit{Almost everywhere convergence of $\N\un\to \N u$.}\\
\noindent
We reason as in Proposition $3.14$ of \cite{dpp}; hence we just sketch the main differences. Firstly, for $k>0$, we denote by $T_k(u)_\nu$ the unique nonnegative solution to 
\begin{equation*}
\begin{cases}
\displaystyle \frac{1}{\nu}(T_k(u)_\nu)_t + T_k(u)_\nu = T_k(u) ,\\
T_k(u)_\nu(0,x)=T_k(u_{\nu,0}(x)),
\end{cases}
\end{equation*}
where $u_{\nu,0}(x) = T_{\nu}(u_0(x))$. The strategy is to take 
$$\psi_{n,h,\nu} = T_{2k}(\un-T_h(\un) + T_k(\un) - (T_k(u)_\nu)\varphi,$$
for $h>2k$ and $0\le \varphi \in C^1_c(\Omega)$, as a test function in \eqref{eqn}, in order to show that
$$\lim_{n\to \infty}\iint_{{Q_T}} \left(a(t,x, T_k(\un),\nabla T_k(\un)) - a(t,x, T_k(\un),\nabla T_k(u))\right) \cdot \nabla (T_k(\un)- T_k(u)) = 0,$$ 
and then to apply \cite[Lemma $5$]{bmp88} in order to deduce the almost everywhere convergence of the gradients in $Q_T$.
\\
There are a few differences with respect to \cite{dpp}. Firstly, they deal with solutions which satisfy the following property:
\begin{equation}\label{condrin}
	\lim_{h\to\infty} \limsup_{n\to\infty} \iint_{\{h<\un<h+k\}} |\nabla \un|^p = 0.
\end{equation}
Moreover here the operator in divergence form also depends on $\un$ but this is not really an issue and the proof works also in this case. Finally, we have the right-hand which is only bounded in $L^1(0,T; L^1_{\rm loc}(\Omega))$ and we need to show that 
\begin{equation}\label{conv1}
\lim_{h\to\infty}	\lim_{\nu \to\infty}\lim_{n\to\infty} \iint_{Q_T} H_n(t,x,\un,\nabla \un) \psi_{n,h,\nu} = 0.  
\end{equation}
 Hence, in order to repeat the proof contained in \cite{dpp}, we just need to show that \eqref{condrin} and \eqref{conv1} hold. 
 
 \medskip
 
 We start with \eqref{condrin}. We consider the following function 
 \begin{equation*}\label{funren}
 \begin{split}\!
 \displaystyle
 S(s)=
 \begin{cases}
 0 \ \ &\t{if}\q s\le h, \\
 \displaystyle s-h \ \ &\t{if}\q h <s< h+k, \\
 k \ \ &\t{if}\q s\ge h+k,
 \end{cases}
 \end{split}
 \end{equation*}
 and we test \eqref{eqn} by $S(\un)$ with $h>s_2$, yielding to
 \begin{align*}
 \alpha\iint_{\{h<\un<h+k\}} |\nabla \un|^p &\le k\iint_{{\{ \un>h\}}} \left(\frac{|\nabla \un|^q}{\un^\ti} + \frac{f}{\un^\gi}  + u_0\right)\le  k\iint_{{\{ \un>h\}}} \frac{|\nabla \un|^q}{\un^\ti} +\frac{k}{h^\gi}\iint_{{\{ \un>h\}}}  f+k\iint_{{\{ \un>h\}}}  u_0.
 \end{align*}
Corollary \ref{L1infinito} implies that the integral on the right-hand involving the gradient term is equi-integrable in $n$. The same holds for the last integral, since $u_0\in L^\si(\Omega)$.
Then, with  $f\in L^1(Q_T)$, one  can take first $n\to \infty$ and then $h\to \infty$ deducing \eqref{condrin}.

\medskip

In order to show \eqref{conv1} we split as follows ($0<\de<s_2$ small enough):
 \begin{equation}\label{conv0}
 \iint_{Q_T} H_n(t,x,\un,\nabla \un) \psi_{n,h,\nu} = \iint_{\{\un\le \de\}} H_n(t,x,\un,\nabla \un) \psi_{n,h,\nu} +\iint_{\{\un>\de\}} H_n(t,x,\un,\nabla \un) \psi_{n,h,\nu}.
 \end{equation}
 We observe that
 \begin{equation*}\label{conv2}
 \iint_{\{\un\le \de\}} H_n(t,x,\un,\nabla \un) \psi_{n,h,\nu} \le 2\iint_{\{\un\le \de\}} H_n(t,x,\un,\nabla \un)\un\varphi \le c\de,
 \end{equation*}
thanks to \eqref{stimaL1} for some $c$ independent on $n,\nu, h$.
 \\
As far as the integral over the set $\set{\un>\de}$ is concerned, we take advantage of \eqref{H}, to say that
\begin{align*}
 \iint_{\{\un> \de\}} H_n(t,x,\un,\nabla \un) \psi_{n,h,\nu}\le \iint_{\{\un> \de\}} g(\un)|\N \un|^q \psi_{n,h,\nu}
+ \sup_{s\in[\de,+\infty)}h(s) \iint_{\{\un> \de\}} f \psi_{n,h,\nu}.
\end{align*}
It follows from Corollary \ref{L1infinito} and from the fact that $T_k(\un)$ is bounded in $L^p(0,T; W^{1,p}_0(\Omega))$ with respect to $n$ that 
\begin{equation}\label{conv3}
\iint_{\{\un> \de\}} g(\un)|\N \un|^q \psi_{n,h,\nu} \le c \left(\iint_{Q_T} T_{2k}^{\rho'}(\un-T_h(\un) +T_k(\un) - T_k(u)_\nu)\right)^{\frac{1}{\rho'}},
\end{equation}
where $c$ does not depend on $n,\nu,h$. Moreover, the right-hand of the previous simply tends to zero as $n,\nu,h$ tend to infinity. 
\\Also the second term on the right-hand of \eqref{conv3} converges to zero, since $f$ belongs (at least) to $L^1(Q_T)$.  
All these computations imply that taking the limit first as $n,\nu,h\to\infty$ and then $\de\to 0$ in \eqref{conv0}, one gets \eqref{conv1}. 
\end{proof}

\medskip

\begin{proof}[Proof of Theorem \ref{teo}]
Let $\un$ be a solution to \eqref{eqn}. First, we recall that, by Lemma \ref{propstimaL1}, $\un$ converges almost everywhere to some function $u$ as $n\to \infty$. 
By Lemmas \ref{tk} and \ref{tk2} one gains that $T_k(u)\in L^p(0,T; W^{1,p}_0(\Omega))$ for any $k>0$ which is condition \eqref{sr0}. \\
In order to recover \eqref{sr2}, we want to pass to the limit every term in the approximating formulation
\begin{equation}\label{solnn}
\begin{split}
\intO u_{n,0}\vp(0)-\int_{Q_T}\un\vp_t
+\iint_{Q_T}a(t,x,\un,\N \un)\cdot\nabla \varphi     =\iint_{Q_T}  H_n(t,x,\un,\N \un)\varphi   ,
\end{split}	
\end{equation}
where  $\vp\in C^1_c([0,T)\times \Omega)$. 

\medskip

 One can simply pass to the limit the first and second term in \eqref{solnn} recalling the definition of $u_{n,0}$ and the strong convergence of $\un$ in $L^\mu(Q_T)$, $\mu<\si$, due to Lemma \ref{propstimaL1}. 

\medskip

As far as the principal part is concerned, it follows from \eqref{A2}, by Corollary \ref{todo}, Lemma \ref{tk}, Remark \ref{stimaenergiafinita} for the finite energy case, and Corollary \ref{todoinfinite}, Lemma \ref{tk2}, Remark \ref{unp1} for the infinite energy case, that $a(t,x,\un,\N \un)$ is strongly compact in $L^1(Q_T)$. Moreover, by the almost everywhere  convergence of the gradients in Lemma \ref{propstimaL1},  $a(t,x,\un,\N \un)$ converges to $a(t,x,u,\N u)$ as $n\to\infty$.

\medskip

It remains to pass to the limit the possibly singular term $H_n(t,x,\un,\N\un)$. If both $g(0) <\infty$ and $h(0)<\infty$, then $H(t,x,0,\xi)$ is finite and then the proof is done with a simple application of Vitali's Theorem, since we have Remark \ref{stimaenergiafinita} (when $\si\ge2$) and Corollary \ref{L1infinito}, Lemma \ref{tk2} (when $\si<2$).
\\ Hence, we suppose that $H(t,x,s,\xi)$  is actually blowing up as $s\to0$. We take a nonnegative $\varphi\in C^1_c([0,T)\times\Omega)$ and  $\delta >0$ such that $\delta\not\in \{\eta: |u=\eta|>0\}$, which is possible since it is a countable set. We have  
	\begin{equation}\label{rhs}
	\int_{Q_T} H_n(t,x,\un, \nabla \un)\varphi = \int_{Q_T\cap \{\un\le \delta\}}H_n(t,x,\un, \nabla \un)\varphi + \int_{Q_T\cap \{\un> \delta\}}H_n(t,x,\un, \nabla \un)\varphi.
	\end{equation}
	We want to pass first $n\to \infty$ and then $\delta\to 0$ in the previous equality. In the second term on the right-hand of \eqref{rhs}, we can simply pass to the limit by Lebesgue's Theorem as $n\to \infty$ (we are far from zero), obtaining
	\begin{equation*} 
	\lim_{n\to \infty}\int_{Q_T\cap \{\un> \delta\}}H_n(t,x,\un, \nabla \un)\varphi= \int_{Q_T\cap \{u> \delta\}}H(t,x,u, \nabla u)\varphi.
	\end{equation*}
	Now we observe that an application of Fatou's Lemma in \eqref{stimaL1} with respect to $n$ gives that $H(t,x,u,\nabla u)\in L^1(0,T; L^1_{\rm loc}(\Omega))$ which gives \eqref{sr1} (recall that $H$ is a Carath\'eodory function). Then, an application of Lebesgue's Theorem yields  
	\begin{equation*} 
	\lim_{\delta\to 0}\lim_{n\to \infty}\int_{Q_T\cap \{\un> \delta\}}H_n(t,x,\un, \nabla \un)\varphi= \int_{Q_T\cap \{u>0\}}H(t,x,u, \nabla u)\varphi.
	\end{equation*}	
	Let us now prove that the first term on the right-hand of \eqref{rhs} tends to zero as $n\to\infty$ and $\delta\to 0$.  
We  define the function $V_\de(s)$ as
\begin{equation*}
\begin{split}\!
\displaystyle
V_{\delta}(s)=
\begin{cases}
1 \ \ &\t{if}\q s\le \delta, \\
\displaystyle\frac{2\delta-s}{\delta} \ \ &\t{if}\q \de <s< 2\delta, \\
0 \ \ &\t{if}\q s\ge 2\delta,
\end{cases}
\end{split}
\end{equation*}
and take $V_\de(\un)\vp$ as a test function in \eqref{eqn} obtaining  
	\begin{equation*}\label{limn1}
	\begin{aligned}
	\int_{Q_T\cap \{\un\le \delta\}}H_n(t,x,\un, \nabla \un)\varphi &\le \int_0^T \langle(\un)_t,V_\delta(\un)\varphi\rangle + \int_{Q_T}a(t,x,\un,\N\un)\cdot\nabla \varphi V_{\delta}(\un) 
	\\
	& -\frac{\al}{\delta} \int_{Q_T\cap\set{\de<\un<2\de}}|\nabla \un|^{p}\varphi
	\\
	&
	\le -\int_\Omega \Phi_\delta (\un)\varphi_t + \int_{Q_T}a(t,x,\un,\N\un)\cdot\nabla \varphi V_{\delta}(\un) \\
& \le C\delta + \int_{Q_T}a(t,x,\un,\N\un)\cdot\nabla \varphi V_{\delta}(\un) ,
	\end{aligned}
	\end{equation*}
	where $\Phi_\delta (s) = \int_0^s V_\delta(t) \ dt$. 
 Let us also underline that, in the previous estimate, we employed Lemma $7.1$ of \cite{DP}. Indeed, even if $V_\delta(0)\ne 0$, as  requested in that lemma, its proof can be repeated taking into account the fact that $\vp\in C_c^1([0,T)\times\Omega)$.\\
Hence, we apply Fatou's Lemma twice, first in $n$ and then in $\de$, obtaining
	\begin{equation}\label{limn33}
	\begin{aligned}
\int_{Q_T\cap \{u=0\}}H(t,x,u, \nabla u)\varphi \le \int_{Q_T \cap \{u=0\}}a(t,x,u,\N u)\cdot\nabla \varphi   = 0.
	\end{aligned}
	\end{equation}
Let us observe that the last equality follows by the fact that $a$ is a Carath\'eodory function which, together with \eqref{A1}, implies that
\begin{equation*}
a(t,x,0,0)=0\q\t{for a.e. }(t,x)\in Q_T.
\end{equation*}
 In particular, \eqref{limn33} gives that the first term in \eqref{rhs} tends to zero as $n\to\infty$ and $\de\to0$.\\
	Hence we have shown that 
	\begin{equation*}
\lim_{n\to\infty}\eqref{rhs}= \int_{Q_T\cap\set{u>0}}H(t,x,u, \nabla u)\varphi\overset{\eqref{limn33}}{=} \int_{Q_T}H(t,x,u, \nabla u)\varphi.
	\end{equation*}	
This proves that \eqref{sr2} holds and that $u$ is a solution to \eqref{pb}. 

As far as the inequalities contained in the statement of Theorem \ref{teo} are concerned,  they are a consequence of Lemma \ref{pot} and Lemma \ref{todo2}.
\end{proof}

\section{Existence result in the strong singular case}
\label{sec:strong}

In this section we briefly treat the strong singular case, i.e. 
\[
\tz>1\q\t{and/or}\q \gz>1.
\]
We begin by adapting the notion of solution we consider in this case, namely we recover the Dirichlet condition in a weaker sense than in Definition \ref{defrin1}. 
We have the following: 
\begin{defin}\label{defrinstrong}
	A function $u\in L^1(Q_T)$ such that $a(t,x,u,\N u) \in L^1(0,T;L^1_{\rm loc}(\Omega))$  is a distributional solution of \eqref{pb} if  
		\begin{equation}
		\Tk{G_\varepsilon(u)} \in L^p(0,T; W^{1,p}_0(\Omega)) \ \ \forall k,\,\eps>0, \label{strong1} 
		\end{equation}
		\begin{equation}
		H(t,x,u,\nabla u)\in L^1(0,T; L_{\rm loc}^1(\Omega)),\label{strong2} 
		\end{equation}
		\begin{equation}\label{strong3}
		\begin{array}{c}
		\ds
		-\int_{\Omega} u_0\vp (0) - \iint_{Q_T}u\vp_t + \iint_{Q_T} a(t,x,u,\N u)\cdot \N \vp   
		=\iint_{Q_T}H(t,x,u,\N u)\vp  
		\end{array}
		\end{equation}
	for every $\vp\in C_c^\infty([0,T)\times \Omega)$.
\end{defin}
\begin{remark}
	We underline that condition \eqref{strong1} is where we recover the Dirichlet boundary condition; this is quite natural when dealing with singular problems.
 For example, in the stationary case \cite{CST} for $H(x,s)=f s^{-\gz}$, the request $G_\eps(u)\in W^{1,p}_0(\Omega)$ (for any $\eps>0$) is employed   and the authors prove that the solution, under suitable assumptions, is unique.
\end{remark}

\medskip

We now are ready to state the existence theorem, which is analogous to Theorem \ref{teo}.
 
\begin{theorem}\label{teostrong} 
Assume that $a(t,x,s,\xi)$ satisfies \eqref{A1}, \eqref{A2}, \eqref{A3},  and that $H(t,x,s,\xi)$ satisfies  \eqref{H}, \eqref{h1}, \eqref{h2}, \eqref{g1}, \eqref{g2}  with either $\theta_0 > 1$ and/or $\gamma_0> 1$  and with $q$ as in \eqref{Q}.  In particular $u_0 \in L^\sigma(\Omega)$ and $f\in L^r(0,T;L^m(\Omega))$ are nonnegative and such that:
\begin{enumerate}[i)]
\item if \eqref{Q1} holds, we assume \eqref{ID1}, \eqref{F1} when  $\gi<\si-1$, and \eqref{F3} if $\gi\ge \si-1$;
\item if \eqref{Q2} holds for any $1<\si<2$, we assume \eqref{F1} when  $\gi<\si-1$, and \eqref{F3} if $\gi \ge \si-1$.
\end{enumerate}
Then there exists at least a solution $u \in L^\infty(0,T;L^\sigma(\Omega))$ to \eqref{pb} in the sense of  Definition \ref{defrinstrong}.\\
In particular:
\begin{enumerate}[--]
\item when the value $\si$ in \eqref{ID1} satisfies $\si\ge2$ in case i), then $G_\eps(u),\,G_\eps(u)^\beta\in L^p(0,T; W^{1,p}_0(\Omega))$ for any $\eps >0$ ;
\item when the value $\si$ in \eqref{ID1} satisfies $1<\si<2$ in case i) or case ii) holds, then $G_\eps(u)(1+G_\eps(u))^{\beta-1}\in L^p(0,T; W^{1,p}_0(\Omega))$ for any $\eps>0$. Furthermore,
$|\nabla G_\eps(u)|^{p-1} \in L^{\frac{b}{p-1}}(Q_T)$ for any $\eps>0$, while  
 $G_\eps(u)\in L^b(0,T; W^{1,b}_0(\Omega))$ for any $\eps>0$   if $p>1+\frac{N(2-\si)}{N+\si}$.
\end{enumerate} 
In every case
\begin{equation*}
\b=\frac{\sigma-2+p}{p}, \ \ \ b=p-\frac{N(2-\si)}{N+\si}. 
\end{equation*}
\end{theorem}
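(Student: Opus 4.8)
The plan is to run the scheme of Section \ref{secmain} almost verbatim, keeping the same approximation \eqref{eqn} with $H_n=T_n(H)$ and $u_{n,0}=T_n(u_0)$, and to isolate the only place where $\theta_0\le1$, $\gamma_0\le1$ were actually used, namely the near-origin bounds of Lemmas \ref{tk} and \ref{tk2}. Everything concerning the behaviour away from the degeneracy set survives unchanged: Lemma \ref{sapp}, Corollary \ref{todo}, Lemma \ref{sappinfinite}, Corollary \ref{todoinfinite} and Corollary \ref{L1infinito} only invoke \eqref{g2}, \eqref{h2} and the change of variable $w_n=e^{-\eta t}u_n$, so they hold as stated. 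In particular $u_n$ is bounded in $L^\infty(0,T;L^\sigma(\Omega))$, $G_k(u_n)^\beta$ (resp. $(1+G_k(u_n))^{\beta-1}G_k(u_n)$) is bounded in $L^p(0,T;W^{1,p}_0(\Omega))$ for $k\ge\bar k$, and $|\nabla u_n|^q u_n^{-\theta_\infty}\chi_{\{u_n>k\}}$ is bounded in $L^\rho(Q_T)$ for some $\rho>1$, all uniformly in $n$.

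The genuinely new ingredient is to replace the estimate on $\nabla T_k(u_n)$ by an estimate on $\nabla T_k(G_\varepsilon(u_n))$ for each fixed $\varepsilon>0$. I would test \eqref{eqn} with $T_k(G_\varepsilon(u_n))$: since $G_\varepsilon(u_n)=0$ on $\{u_n\le\varepsilon\}$ the singular zone is never seen, while on $\{u_n>\varepsilon\}$ one has $g(u_n)\le\max_{[\varepsilon,\varepsilon+k]}g<\infty$ and $h(u_n)\le\sup_{[\varepsilon,+\infty)}h<\infty$, the latter being finite by continuity of $g,h$ off the origin together with \eqref{g2}, \eqref{h2}. Splitting the gradient term, on $\{\varepsilon<u_n\le\varepsilon+k\}$ one has $|\nabla u_n|=|\nabla T_k(G_\varepsilon(u_n))|$ a.e., so $q<p$ and Young's inequality absorb that piece into the principal part; on $\{u_n>\varepsilon+k\}$ one has $T_k(G_\varepsilon(u_n))=k$ and $\iint_{\{u_n>\varepsilon+k\}}|\nabla u_n|^q$ is controlled by Corollary \ref{todo} in the finite energy case and by Corollary \ref{L1infinito} in the infinite energy case; the forcing term is bounded by $(\sup_{[\varepsilon,\infty)}h)\,\|f\|_{L^1(Q_T)}\,k$ and the parabolic term by the usual integration by parts. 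This yields
\[
\iint_{Q_T}|\nabla T_k(G_\varepsilon(u_n))|^p\le M(\varepsilon,k)\qquad\forall k>0,
\]
with $M(\varepsilon,k)$ independent of $n$ but now $\varepsilon$-dependent — which is exactly why Definition \ref{defrinstrong} asks only for \eqref{strong1}. Testing instead with $T_{\hat k}(G_\varepsilon(u_n))^{\sigma-1}$ when $\sigma\ge2$, and using $|\nabla((1+G_\varepsilon(u_n))^{\beta-1}G_\varepsilon(u_n))|^p\le c|\nabla u_n|^p(1+u_n)^{\sigma-2}$ together with Lemma \ref{sappinfinite} when $1<\sigma<2$, upgrades this to the power and weighted bounds; passing to the limit these give the claimed regularity of $G_\varepsilon(u)$, $G_\varepsilon(u)^\beta$, $(1+G_\varepsilon(u))^{\beta-1}G_\varepsilon(u)$ and $|\nabla G_\varepsilon(u)|^{p-1}$, the value of $b$ and the inequality $b>p-1$ being obtained as in Corollary \ref{todoinfinite} and Remark \ref{b}.

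For the passage to the limit I would reproduce Lemma \ref{propstimaL1} and the proof of Theorem \ref{teo} with $T_k(G_\varepsilon(u_n))$ in place of $T_k(u_n)$. The bound $\iint_{Q_T}H_n(t,x,u_n,\nabla u_n)\varphi\le M$ for nonnegative $\varphi\in C^1_c([0,T)\times\Omega)$ follows exactly as before from \eqref{A2} and the $L^\infty(L^\sigma)$ and $L^{b/(p-1)}$ bounds, hence $H(t,x,u,\nabla u)\in L^1(0,T;L^1_{\rm loc}(\Omega))$ by Fatou, i.e. \eqref{strong2}; the a.e. convergence of $u_n$ and the strong convergence in $L^\mu(Q_T)$, $\mu<\sigma$, are untouched. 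For the a.e. convergence of the gradients I would run the argument of \cite{dpp, bmp88} on $T_k(G_\varepsilon(u_n))$, using the test function $T_{2k}\big(G_\varepsilon(u_n)-T_h(G_\varepsilon(u_n))+T_k(G_\varepsilon(u_n))-(T_k(G_\varepsilon(u))_\nu)\big)\varphi$ and Corollary \ref{L1infinito} to handle \eqref{condrin} and the singular right-hand side as in \eqref{conv1}, obtaining $\nabla u_n\to\nabla u$ a.e. on $\{u>\varepsilon\}$, and then letting $\varepsilon\downarrow0$ to get it on $\{u>0\}$; on $\{u=0\}$ one has $\nabla u=0$ a.e., and the $V_\delta(u_n)$ computation from the proof of Theorem \ref{teo} (which uses $a(t,x,0,0)=0$) shows both that the singular term carries no mass there and that $\iint_{\{u=0\}}a(t,x,u_n,\nabla u_n)\cdot\nabla\varphi\to0$. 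Assembling these, one passes to the limit in the approximate weak formulation as in the proof of Theorem \ref{teo} and recovers \eqref{strong3}, so $u$ solves \eqref{pb} in the sense of Definition \ref{defrinstrong}. The main obstacle is precisely this last step: the a.e. gradient convergence is only available on $\{u>0\}$ through the $\varepsilon$-localised argument, and one must then carefully check that the contributions of both $a(t,x,u_n,\nabla u_n)$ and $H_n(t,x,u_n,\nabla u_n)$ on the degeneracy set $\{u=0\}$ vanish in the limit; the rest is bookkeeping of the $\varepsilon$-dependence of the constants.
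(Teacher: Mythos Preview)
Your plan is very close to the paper's and the first two thirds are essentially identical: the paper records exactly your observation that all the $G_k(u_n)$ estimates (Lemmas \ref{sapp}, \ref{sappinfinite} and Corollaries \ref{todo}, \ref{todoinfinite}, \ref{L1infinito}) survive unchanged (this is Remark \ref{saploc}), and your test with $T_k(G_\varepsilon(u_n))$ is precisely Lemma \ref{sappstrong}, proved just as you describe.

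The difference, and the genuine gap in your proposal, is the handling of the degeneracy set. You try to pass to the limit knowing only that $\nabla T_k(G_\varepsilon(u_n))$ is bounded in $L^p(Q_T)$, and you claim the $V_\delta$ computation of Theorem \ref{teo} shows $\iint_{\{u=0\}}a(t,x,u_n,\nabla u_n)\cdot\nabla\varphi\to0$. But that computation does not give this: it uses Fatou's Lemma on the nonnegative $H_n$ term \emph{after} one already has a.e.\ convergence of $\nabla u_n$ and local equi-integrability of $a(t,x,u_n,\nabla u_n)$, and it only concludes $\iint_{\{u=0\}}a(t,x,u,\nabla u)\cdot\nabla\varphi=0$, a statement about the limit $u$. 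With your estimates alone you have no control of $|\nabla u_n|$ on $\{u_n\le\varepsilon\}$, so neither the a.e.\ gradient convergence on all of $Q_T$ nor the local $L^1$ bound on $a(t,x,u_n,\nabla u_n)$ is available, and the passage to the limit in the principal part is not justified.

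The paper fixes this with one extra lemma (Lemma \ref{tkstrong}): test \eqref{eqn} by $(T_k(u_n)-k)\varphi^p$ with $0\le\varphi\in C^1_c([0,T)\times\Omega)$. Since $T_k(u_n)-k\le 0$ and $H_n\ge 0$, the entire singular right-hand side has the right sign and is simply dropped, no matter how large $\theta_0,\gamma_0$ are. One then absorbs the cross terms coming from $\nabla(\varphi^p)$ by Young's inequality and obtains the \emph{local} bound $\iint_{Q_T}|\nabla T_k(u_n)|^p\varphi^p\le M$ uniformly in $n$. With this in hand, $a(t,x,u_n,\nabla u_n)$ is bounded in $L^1(0,T;L^1_{\rm loc}(\Omega))$, the proof of Lemma \ref{propstimaL1} can be localised verbatim to get $\nabla u_n\to\nabla u$ a.e.\ in $Q_T$, and the rest of the proof of Theorem \ref{teo} (including the $V_\delta$ step) goes through unchanged. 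Add this test-function trick and your argument is complete.
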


\begin{center}
\begin{table}[H]
\setlength{\tabcolsep}{8pt}
\renewcommand{\arraystretch}{2.4} 
\begin{tabu}{ c | c | c |c  }  
  $q$ & Assumptions on $u_0$ & Assumptions on $f$ & $\gi$
\\
\hline
\multirow{2}{*}{\eqref{Q1}}
&
\multirow{2}{*}{$u_0\in L^\si(\Omega)$ with $\si$ as in \eqref{ID1}}
&$f\in L^r(0,T;L^m(\Omega))$ with \eqref{F1} 
& $\gi<\si-1 $
\\\cline{3-4}
&
&
$f\in L^1(Q_T)$ (see \eqref{F3})
& $\gi\ge\si-1 $
\\\cline{1-4}
\multirow{2}{*}{\eqref{Q2}}
&
\multirow{2}{*}{$u_0\in L^\si(\Omega)$ for any $1<\si<2$}
&$f\in L^r(0,T;L^m(\Omega))$ with \eqref{F1} 
& $\gi<\si-1 $
\\
\cline{3-4}
& &
$f\in L^1(Q_T)$ (see \eqref{F3})
& $\gi\ge\si-1 $
\end{tabu}
\vspace*{3mm}
\caption{Local solutions when $\max\set{\tz,\,\gz}> 1$} 
\end{table}
\end{center}

In order to prove the previous theorem we employ once again the scheme of approximation \eqref{eqn}. 

\begin{remark}\label{saploc}
In this section we are treating problem \eqref{pb} where $H(t,x,u,\N u)$ satisfies \eqref{H} with  \eqref{g1} and \eqref{h1} with $\tz$ and/or $\gz$ possibly greater than one. Hence, this means that   all the estimates contained in Section \ref{secmain} concerning $G_k(\un)$ continue to hold, i.e. 
\begin{enumerate}[--]
\item when $\si\ge2$, $\un$ and $\Gk{\un}^\b$ are uniformly bounded, respectively, in $L^\infty(0,T;L^\si(\Omega))$ and $L^p(0,T;W^{1,p}_0(\Omega))$ with respect to $n$ from Lemma \ref{sapp}. Moreover, Corollary \ref{todo} gives that $\Gk\un$ is uniformly bounded in $L^p(0,T;W^{1,p}_0(\Omega))$ with respect to $n$;
\item when $\si<2$,  Lemma \ref{sappinfinite} and Remark \ref{unp1} give us that  $\un$ and $\pare{1+\Gk{\un}}^{\b-1}\Gk{\un}$ are uniformly bounded, respectively, in $L^\infty(0,T;L^\si(\Omega))$ and $L^p(0,T;W^{1,p}_0(\Omega))$ with respect to $n$. Furthermore, 
Corollaries \ref{todoinfinite} and \ref{L1infinito}  continue to hold.
\end{enumerate}
Hence we just need to prove a priori estimates in order to deduce \eqref{strong1} and some local estimates on truncations of $\un$. 
\end{remark}

\begin{lemma}\label{sappstrong}
	Assume that $a$ satisfies \eqref{A1}, \eqref{A2}, and that $H$ satisfies  \eqref{H}, \eqref{g2}, \eqref{h2}   with $q$ as in \eqref{Q1}. In particular $u_0 \in L^\sigma(\Omega)$ with $\sigma\ge 2$ as in \eqref{ID1} and $f\in L^r(0,T;L^m(\Omega))$ are nonnegative and such that the couple $(r,m)$ satisfies \eqref{F1} when  $\gi<\si-1$ and \eqref{F3} if $\gi\ge \si-1$. Let $\un$ be a solution to \eqref{eqn} then 
\begin{equation*}
		\iint_{Q_T}|\nabla T_k(G_\varepsilon(\un))|^p   \le M \q \forall k,\,\eps>0, 
\end{equation*}	
where $M>0$ is a constant which does not depend on $n$.
\end{lemma}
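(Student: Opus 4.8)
The estimate is the substitute for Lemma~\ref{tk} in the strong singular setting, and the reason one cannot simply reuse that proof is plain: when $\tz>1$ and/or $\gz>1$ the quantities $\max_{[0,k]}s\,g(s)$ and $\max_{[0,k]}s\,h(s)$ need no longer be finite, so the natural test function $\Tk{\un}$ is useless and one must stay away from the origin. The plan is therefore to test \eqref{eqn} by $\Tk{G_\eps(\un)}$, whose gradient equals $\chi_{\set{\eps<\un<\eps+k}}\N\un$ and which is supported on $\set{\un>\eps}$, where the nonlinearities are tame: since $g$ and $h$ are continuous and finite on $(0,+\infty)$ and obey \eqref{g2}, \eqref{h2}, the constants $C_g(\eps):=\sup_{s\ge\eps}g(s)$ and $C_h(\eps):=\sup_{s\ge\eps}h(s)$ are finite for every $\eps>0$. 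Recall also that $f\in L^1(Q_T)$ in all the cases treated here.

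Before testing, I would record two facts from Remark~\ref{saploc}: because $\si\ge2$, Lemma~\ref{sapp} and Corollary~\ref{todo} still apply, so $\un$ is bounded in $L^\infty(0,T;L^\si(\Omega))$ and $\iint_{Q_T}|\N\Gk{\un}|^p\le M$ uniformly in $n$ for every $k\ge\hat{k}$; consequently $\iint_{\set{\un>\hat{k}}}|\N\un|^q\le M$ uniformly in $n$ by H\"older's inequality. I would prove the asserted bound first in the case $\eps+k\ge\hat{k}$; the remaining range $0<\eps+k<\hat{k}$ then follows by monotonicity from the case $k=\hat{k}$, since $|\N\Tk{G_\eps(\un)}|\le|\N T_{\hat{k}}(G_\eps(\un))|$ almost everywhere.

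For the main estimate, take $\Tk{G_\eps(\un)}$ — admissible, being a bounded Lipschitz function of $\un$ vanishing on $\set{\un\le\eps}$, hence in $L^p(0,T;W^{1,p}_0(\Omega))$ — as a test function in \eqref{eqn} and integrate in time. The parabolic term yields $\intO\Xi_{k,\eps}(\un(T))-\intO\Xi_{k,\eps}(u_{n,0})$ with $\Xi_{k,\eps}(s):=\int_0^s\Tk{G_\eps(\tau)}\,d\tau\ge0$ and $\intO\Xi_{k,\eps}(u_{n,0})\le k\intO u_{n,0}\le k\norm{u_0}_{L^1(\Omega)}$; by \eqref{A1} the principal part dominates $\al\iint_{Q_T}|\N\Tk{G_\eps(\un)}|^p$; and by \eqref{H}, \eqref{g2}, \eqref{h2} the right-hand side is at most
\[
C_g(\eps)\iint_{\set{\un>\eps}}|\N\un|^q\,\Tk{G_\eps(\un)}+C_h(\eps)\,k\,\norm{f}_{L^1(Q_T)}.
\]
In the gradient integral I would split $\set{\un>\eps}=\set{\eps<\un<\eps+k}\cup\set{\un\ge\eps+k}$: on the first set $\N\un=\N\Tk{G_\eps(\un)}$ and $\Tk{G_\eps(\un)}\le k$, so Young's inequality with exponents $(p/q,\,p/(p-q))$ (here $q<p$) controls this part by $\tfrac{\al}{2}\iint_{Q_T}|\N\Tk{G_\eps(\un)}|^p$ plus a constant depending only on $\eps,k,|Q_T|,p,q,\al$; on the second set $\Tk{G_\eps(\un)}=k$ and, as $\eps+k\ge\hat{k}$, $\set{\un\ge\eps+k}\subseteq\set{\un>\hat{k}}$, so this part is at most $k\iint_{\set{\un>\hat{k}}}|\N\un|^q\le kM$. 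Collecting the terms and absorbing the $\tfrac{\al}{2}$-term on the left, $\iint_{Q_T}|\N\Tk{G_\eps(\un)}|^p$ is bounded by a constant depending on $\eps$ and $k$ but not on $n$.

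The only genuine difficulty is exactly the one just exploited: in the strong singular regime there is no uniform $L^p(Q_T)$ bound on $\N\un$ near the origin (the present lemma is precisely the weaker, $G_\eps$-dependent, replacement for such a bound), so the decomposition has to be performed at the level $\hat{k}$ of Corollary~\ref{todo}, with the elementary monotonicity step taking care of the small values of $\eps+k$. All the rest is a routine absorption.
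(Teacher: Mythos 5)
Your proof is correct and follows essentially the same route as the paper: test \eqref{eqn} by $T_k(G_\varepsilon(\un))$, absorb the near-origin piece of the gradient term (where $\un\in(\eps,\eps+k)$, so $\nabla\un=\nabla T_k(G_\eps(\un))$ and $g(\un)\le\sup_{s\ge\eps}g$) by Young's inequality, and control the far piece on $\{\un\ge\eps+k\}$ via the uniform $L^p$-bound on $\nabla G_{\hat k}(\un)$ from Corollary~\ref{todo}, with $f$-term and initial-datum term handled by $\sup_{s\ge\eps}h<\infty$, $f\in L^1(Q_T)$, and $\Xi_{k,\eps}(s)\le ks$. The only cosmetic difference is that the paper invokes \eqref{g2} to write $g(\un)\le\bc\un^{-\ti}$ on the far set and then cites Corollaries~\ref{todo} and \ref{L1infinito}, whereas you simply use $g(\un)\le\sup_{s\ge\eps}g(s)$ together with the $L^q$-bound for $\nabla\un$ on $\{\un>\hat k\}$ obtained from Corollary~\ref{todo} by H\"older — in the $\si\ge 2$ regime of this lemma your route is if anything slightly cleaner. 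The concluding monotonicity step for small $\eps+k$ is the same.
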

\begin{proof}
We test \eqref{eqn} by $T_k(G_\varepsilon(\un))$, $k>\max(\hat{k}, \ok)$,  obtaining
\begin{equation*}
\begin{aligned}
\int_{\Omega} \Phi_{k,\eps}(\un(T)) + \alpha \iint_{Q_T}|\nabla T_k(G_\varepsilon(\un))|^p     
&\le k \iint_{Q_T}  g(\un) |\nabla G_\varepsilon(\un)|^q      + k \sup_{s\in (\varepsilon,+\infty)} h(s)\norm{f}_{L^1(Q_T)} 
+\int_{\Omega}\Phi_{k,\eps}(u_0), 
\end{aligned}
\end{equation*}	
where $\Phi_{k,\eps}(s)=\int_0^sT_k(G_\eps(v))\,dv$.
 Then, using also \eqref{g2}, one has
\begin{equation*}
\begin{aligned}
\alpha \iint_{Q_T}|\nabla T_k(G_\varepsilon(\un))|^p     
&\le k \sup_{s\in (\varepsilon,+\infty)} g(s) \iint_{Q_T} |\nabla T_k(G_\varepsilon(\un))|^q + \overline{c}k\iint_{Q_T} \un^{-\ti}|\nabla G_{\varepsilon+k}(\un)|^q  
\\
&+ k \sup_{s\in (\varepsilon,+\infty)} h(s)\norm{f}_{L^1(Q_T)} 
+k\norm{u_0}_{L^1(\Omega)}.
\end{aligned}
\end{equation*}	
Hence, applying  Young's inequality, we deduce
\begin{equation*} 
\begin{aligned}
\frac{\alpha}{2} \iint_{Q_T}|\nabla T_k(G_\varepsilon(\un))|^p     
&\le C + k \overline{c}\iint_{Q_T} \un^{-\ti}|\nabla G_{\varepsilon+k}(\un)|^q   
+ k \sup_{s\in (\varepsilon,+\infty)} h(s)\norm{f}_{L^1(Q_T)} 
+k\norm{u_0}_{L^1(\Omega)}. 
\end{aligned}
\end{equation*}	
Since $k>\max(\hat{k}, \ok)$, we deduce from Corollary \ref{todo} and Corollary \ref{L1infinito} that the right-hand of the previous equality is less than or equal to a constant which does not depend on $n$. The proof concludes by monotonicity. 
\end{proof}
Now we prove the local estimates on truncations of $\un$.
\begin{lemma}\label{tkstrong}
	Under the assumptions of Lemma \ref{sappstrong}, it holds that
	\begin{equation*}
	\iint_{Q_T}|\nabla T_k(\un)|^p\varphi^p   \le M\q\forall k>0, 
	\end{equation*}	
	where $0\le\varphi \in C^1_c([0,T) \times \Omega)$ and $M>0$ is a constant which does not depend on $n$.
\end{lemma}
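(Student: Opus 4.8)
The plan is to test the approximate equation \eqref{eqn} with $\psi:=(k-\un)^+\varphi^p$. This choice is dictated by two features. First, $\psi\ge 0$, so the possibly strongly singular right-hand side $\iint_{Q_T}H_n(t,x,\un,\N\un)\psi$ is nonnegative and can simply be discarded; in this way one never has to control $H_n$ near the degeneracy set $\{\un=0\}$, which is exactly where the strong singularity of $g$ and/or $h$ bites. Second, the factor $\varphi^p$ makes $\psi$ compactly supported in $\Omega$, so that $\psi\in L^p(0,T;W^{1,p}_0(\Omega))$ is an admissible test function even though $(k-\un)^+\equiv k$ on $\partial\Omega$, and moreover it is the power of $\varphi$ that lets the term coming from $\N\varphi$ be absorbed into the leading-order term \emph{locally}.

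Concretely, I would proceed as follows. The parabolic term is treated by the usual chain rule for functions with time derivative in $L^{p'}(0,T;W^{-1,p'}(\Omega))$: with $\Xi(s):=\int_0^s (k-\tau)^+\,d\tau$ one has
\[
\int_0^T\langle(\un)_t,\psi\rangle = \int_\Omega\Xi(\un(T))\varphi^p(T)-\int_\Omega\Xi(u_{n,0})\varphi^p(0)-\iint_{Q_T}\Xi(\un)(\varphi^p)_t ,
\]
where the first term vanishes since $\varphi(T,\cdot)=0$, and the remaining two are bounded by a constant depending only on $k$, $\varphi$ and the data because $0\le\Xi(s)\le ks$ while $\un$ is bounded in $L^\infty(0,T;L^1(\Omega))$ (Remark~\ref{saploc}). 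For the diffusion term, from $\N\psi=-\N T_k(\un)\,\varphi^p + p(k-\un)^+\varphi^{p-1}\N\varphi$ and \eqref{A1} the first contribution is $\le -\alpha\iint_{Q_T}|\N T_k(\un)|^p\varphi^p$, whereas the second is supported in $\{\un<k\}$, where $\un^{p-1}<k^{p-1}$, $|\N\un|^{p-1}=|\N T_k(\un)|^{p-1}$ and $(k-\un)^+<k$; hence by \eqref{A2} and Young's inequality with exponents $\big(\tfrac{p}{p-1},p\big)$ it is bounded by $\tfrac\alpha2\iint_{Q_T}|\N T_k(\un)|^p\varphi^p$ plus a constant depending on $k$, $\varphi$ and $\|\ell\|_{L^{p'}(Q_T)}$.

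Plugging these bounds into the weak formulation $\int_0^T\langle(\un)_t,\psi\rangle+\iint_{Q_T}a(t,x,\un,\N\un)\cdot\N\psi=\iint_{Q_T}H_n(t,x,\un,\N\un)\psi\ge 0$ and absorbing the $\tfrac\alpha2$-term, one gets $\tfrac\alpha2\iint_{Q_T}|\N T_k(\un)|^p\varphi^p\le M$ with $M$ independent of $n$, which is the claim; observe that $k$ enters only as a multiplicative constant, so neither largeness of $k$ nor a monotonicity argument in $k$ is required (alternatively one could split $T_k(\un)=T_\delta(\un)+T_{k-\delta}(G_\delta(\un))$ and dispose of the second summand via Lemma~\ref{sappstrong}, but this is not needed). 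The one point to be careful about is the validity of the integration-by-parts identity above for a test function of the form $j(\un)\varphi^p$ with $j=(k-\cdot)^+$ not vanishing at the origin; this is handled exactly as in the proof of Theorem~\ref{teo}, where a test function $V_\delta(\un)\varphi$ with $V_\delta(0)\neq 0$ is used, relying on Lemma~$7.1$ of \cite{DP} and the compact support of $\varphi$ in $\Omega$.
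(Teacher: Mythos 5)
Your proof is correct and follows essentially the same route as the paper, which tests \eqref{eqn} with $(T_k(\un)-k)\varphi^p=-\psi$ and exploits that the resulting right-hand side $\iint_{Q_T} H_n(T_k(\un)-k)\varphi^p$ is nonpositive. The remaining steps — handling the parabolic term through the primitive $\Xi$ (the paper's $S_k=-\Xi$), and absorbing the term $p\iint_{Q_T}(k-\un)^+\varphi^{p-1}a\cdot\N\varphi$ via \eqref{A2} and Young's inequality — coincide with the paper's argument.
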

\begin{proof}
We consider a nonnegative $\varphi \in C^1_c([0,T) \times \Omega)$ and we multiply \eqref{eqn} by $(T_k(\un)-k)\varphi^p$. Integrating in space, one gets
\begin{equation*} 
\begin{aligned}
& \langle (\un)_t, (T_k(\un)-k)\varphi^p \rangle + \alpha \int_{\Omega}|\nabla T_k(\un)|^p\varphi^p 
+ p\int_{\Omega} a(t,x,\un,\nabla \un)\cdot \nabla \varphi \varphi^{p-1}(T_k(\un)-k) \le 0.
\end{aligned}
\end{equation*}		
Then, integrating in time too and estimating the integral involving the divergence term, we have
\begin{equation}
\begin{aligned}
\alpha \iint_{Q_T}|\nabla T_k(\un)|^p\varphi^p &\le 
pk\iint_{Q_T} |a(t,x,\Tk{\un},\nabla \Tk{\un})||\nabla \varphi| \varphi^{p-1} +\int_\Omega S_k(u_0)\varphi(0,x) + 
\iint_{Q_T} S_k(\un) \varphi_t\\
&\le  cpk\varepsilon \iint_{Q_T} |\nabla T_k(\un)|^p\varphi^{p} + cpkc_\eps\iint_{Q_T} |\nabla \varphi|^p + cpk\iint_{Q_T} (T_k(\un)^{p-1}+ l) |\nabla \varphi| \varphi^{p-1} 
\\ &\q+ \int_\Omega S_k(u_0)\varphi(0)^p + 
p\iint_{Q_T} S_k(\un)\varphi^{p-1} \varphi_t,\label{local4}
\end{aligned}
\end{equation}
thanks also to  Young's inequality, and
where $S_k(s) = \int_0^s (T_k(z)-k)$. Hence taking $\varepsilon$ sufficiently small and observing that $|S_k(s)|\le Ck$, then one gets that the right-hand of \eqref{local4} is less than or equal to a constant $M$ which is independent of $n$.
\end{proof}

\medskip 

\begin{proof}[Proof of Theorem \ref{teostrong}] 
 Let $\un$ be a solution to \eqref{eqn}. 
Reasoning as in Lemma \ref{propstimaL1}, 
we deduce the strong $L^\mu$-convergence of $\un\to u$ for $\mu<\si$, and
 we get the uniform boundedness in $L^1(0,T; L^1_{\rm loc}(\Omega))$ of $H(t,x,\un,\N\un)$ which also gives \eqref{strong2}. Now, we can localize the  proof of Lemma \ref{stimaL1}, in order to deduce the almost everywhere  convergence of the gradients.\\
The weak formulation \eqref{strong3} can be obtained exactly as in the proof of Theorem \ref{teo}. Condition \eqref{strong1} follows from Lemma \ref{sappstrong}. This concludes the proof.  
\end{proof}

\section{A nonexistence result }
\label{sec:non}

The aim of this section is proving that the hypothesis assumed in the growth range case \eqref{Q1} are sharp in order to have an existence result, at least in the finite energy case.\\

For the sake of simplicity, we consider the following model  problem
\begin{equation}\label{D}
\begin{cases}
\ds u_t-\D_p u=g(u)|\N u|^q &\t{in }Q_T,\\
 u=0&\t{on }(0,T)\times\partial\Omega,\\
u(0,x)=u_0(x)& \t{in }\Omega,
\end{cases}
\end{equation}
where 
$$g(s)=
\begin{cases}
1 \ \ &\text{if }0\le s\le 1,\\
s^{-\ti} \ \ &\text{if }s>1,
\end{cases}
$$
and $p>2$.\\
The goal is to show that there exists some initial datum
\begin{equation}\label{IDNE}  
u_0\in L^{\eta}(\Omega)\q\t{with}\q 2\le\eta<\si=\frac{N(q-p+1-\ti)}{p-q},
\end{equation}
such that the problem \eqref{D} does not admit any solution $u$ such that 
\begin{equation}\label{solne1}
u\in L^\infty(0,T;L^\eta(\Omega))\cap L^p(0,T;W_0^{1,p}(\Omega)) \text{ and }u^{\frac{\eta+p-2}{p}}\in  L^p(0,T;W_0^{1,p}(\Omega)).
\end{equation}
Note that we are no longer dealing with solutions to \eqref{D} such that $u^{\frac{\sigma-2+p}{p}}\in L^p(0,T; W^{1,p}_0(\Omega))$ since $\sigma>\eta$.\\
Our choice for the initial datum is given by  
\begin{equation}\label{u0}
	u_0(x)=|x|^{-\frac{N}{\eta}+\om}\chi_{\{|x|<1\}}
\end{equation}
for $\om>0$ sufficiently small, so that $u_0$ fulfills \eqref{IDNE}. 
\\
Hence we work by contradiction assuming that there exists a solution $u$ to \eqref{D}, where $u_0$ is given by \eqref{u0}, and such that \eqref{solne1} holds.
\\We follow the idea contained in \cite[Subsection $3.2$]{BASW}: we show integral inequalities for $u$ from above and for  $U$ from below, where $U=U(t,x)$ is the solution to
\begin{equation*}\label{UU}
\begin{cases}
U_t-\D_p U=0 &\text{in}\,\,Q_T,\\
U=0 &\text{on}\,\,(0,T)\times\partial\Omega,\\
U(0,x)=u_0(x)  & \text{in}\,\, \Omega.
\end{cases}
\end{equation*}
Having $u=U=0$ on $(0,T)\times\partial\Omega$, and also $u(0,x)=U(0,x)=u_0(x)$ in $\Omega$, allows us to apply standard comparison results for $u_t-\D_p u$ (we quote, for instance, \cite{Di}) and deduce that $U\le u$.\\

We start showing a lower bound for $U$; indeed it follows from the Harnack inequality (see \cite[Chapter $VI$ Paragraph $8$]{Di}) and from \eqref{u0} that 
\begin{equation*}
c_1r^{-\frac{N}{\eta}+\om}\le \fint_{B_r}u_0(x) \le c_2\bra{
\pare{\frac{r^p}{t}}^\frac{1}{p-2}
+\pare{\frac{t}{r^p}}^\frac{N}{p}
\pare{\inf_{y\in B_r}U(t,y)}^{\frac{\lm}{p}}
},
\end{equation*}
where $t>0$, $r<1 $, $B_r=B_r(0)$, $c_2=c_2(N,p)$ and $\lm =p(N+1)-2N$. Hence this implies that
\[
r^{-\frac{N}{\eta}+\om}\le c
\bra{
\pare{\frac{r^p}{t}}^\frac{1}{p-2}
+\pare{\frac{t}{r^p}}^\frac{N}{p}
\pare{\inf_{y\in B_r}U(t,y)}^{\frac{\lm}{p}}
}.
\]
If $r$ is small enough such that
\begin{equation}\label{start}
t\gg r^{p+\frac{(N-\om \eta)(p-2)}{\eta}},
\end{equation}
we have $r^{-\frac{N}{\eta}+\om}\gg \left(\frac{r^p}{t}\right)^\frac{1}{p-2}$ and then we obtain
\[
r^{-\frac{N}{\eta}+\om}\le 
c\pare{\frac{t}{r^p}}^\frac{N}{p}
\pare{\inf_{y\in B_r}U(t,y)}^{\frac{\lm}{p}}.
\]
Hence
\begin{equation*}
\inf_{y\in B_r}U(t,y)\ge c t^{-\frac{N}{\lm}}r^{\frac{pN}{\lm}\frac{\eta-1}{\eta} +p\frac{\om}{\lm}},
\end{equation*}
which implies
\begin{equation}\label{infU}
\int_{B_r}U(t,y) \ge c t^{-\frac{N}{\lm}}r^{\frac{pN}{\lm}\frac{\eta-1}{\eta} +p\frac{\om}{\lm}+N}.
\end{equation}

We now look for a bound from above for the solution of \eqref{D}.
We assume that $\ti<\eta-1(<\si-1)$. Indeed, if  $\ti\ge\eta-1(<\si-1)$, the problem becomes sublinear in the gradient, i.e., no compatibility conditions among the data and the growth rate of the gradient are needed in order to get existence of solutions.  Then, a solution $u$ fulfills
$u^{\frac{\eta+q-1-\ti}{q}}\in L^q(0,T;W^{1,q}_0(\Omega))$, namely
\begin{equation}\label{nonex}
\iint_{Q_T} |\N u|^q u^{\eta-1-\ti} < \infty.
\end{equation}
Indeed, after a simple density argument, one can test the equation in \eqref{D} by $T_k(u)^{\eta-1}$, $k>1$, and getting rid of the nonnegative terms on the right-hand, obtaining
\begin{equation*}
\begin{aligned}
	\iint_{Q_T} |\N G_1(u)|^q T_k(u)^{\eta-1}u^{-\ti} &\le \iint_{Q_T} g(u)|\N u|^q T_k(u)^{\eta-1}\\
 &\le c\pare{\int_\Omega T_k(u(T))^\eta +  \iint_{Q_T} |\nabla T_k^\frac{\eta-2+p}{p}(u)|^p }
	\\
	&\le c\pare{\int_\Omega u(T)^\eta +  \iint_{Q_T} |\nabla u^\frac{\eta-2+p}{p}|^p}, 
\end{aligned}
\end{equation*} 
which, under the assumptions on $u$, is finite. Moreover, an application of Fatou's Lemma as $k\to\infty$ gives that 
\begin{equation*}
\begin{aligned}
\iint_{Q_T} |\N G_1(u)|^q u^{\eta-1-\ti} <\infty. 
\end{aligned}
\end{equation*} 
We also observe that 
\begin{equation*}
\begin{aligned}
\iint_{Q_T} |\N T_1(u)|^q u^{\eta-1-\ti} <\infty, 
\end{aligned}
\end{equation*} 
by the assumption on $\ti$. This shows that \eqref{nonex} holds.
Then, there exists at least a sequence $\{t_j\}_j$ satisfying $t_j\to 0$ such that 
\begin{equation*} 
\int_{\Omega}|\N u(t_j)|^qu(t_j)^{\eta-1-\ti} \le c\norm{\N \pare{u(t_j)^{\frac{\eta+q-1-\ti}{q}}}}_{L^q(\Omega)}^q\le \frac{c}{t_j}. 
\end{equation*}
Then, applying  H\"older's inequality with exponents $\left(q^*\frac{\eta+q-1-\ti}{q}, \left(q^*\frac{\eta+q-1-\ti}{q} \right)'\right)$, we have
\begin{equation}\label{u}
\begin{split}
\int_{B_r}u(t_j,y)  &\le c \|u(t_j)\|_{L^{q^*\frac{\eta+q-1-\ti}{q}}(\Omega)}
r^{N-\frac{N-q}{\eta+q-1-\ti}}\\
&\le c\|\N (u(t_j)^{\frac{\eta+q-1-\ti}{q}})\|_{L^q(\Omega)}^{\frac{q}{\eta+q-1-\ti}}\,
r^{N-\frac{N-q}{\eta+q-1-\ti}}\\
& \le ct_j^{-\frac{1}{\eta+q-1-\ti}}r^{N-\frac{N-q}{\eta+q-1-\ti}}.
\end{split}
\end{equation}
Now, since $U\le u$, we gather \eqref{infU} and \eqref{u} in order to obtain 
\begin{equation*}
t_j^{-\frac{N}{\lm}}r^{N+\frac{pN}{\lm}\frac{\eta-1}{\eta} +p\frac{\om}{\lm}}\le\int_{B_r}U(t_j,y) \le\int_{B_r}u(t_j,y) \le ct_j^{-\frac{1}{\eta+q-1-\ti}}r^{N-\frac{N-q}{\eta+q-1-\ti}}
\end{equation*}
from which
\begin{equation*} 
t_j^{\frac{1}{\eta+q-1-\ti}-\frac{N}{\lm}}r^{\frac{pN}{\lm}\frac{\eta-1}{\eta} +p\frac{\om}{\lm}+\frac{N-q}{\eta+q-1-\ti}}\le c.
\end{equation*}
We recall \eqref{start} and set $r=\eps t_j^{\frac{\eta}{p\eta+N(p-2)-\om\eta(p-2)}}$, where $0<\eps\ll 1$, obtaining
\begin{equation}\label{dis222}
t_j^{\vp}\le c(\eps)
\end{equation}
for $\vp=\vp(\eta)$ defined by
\begin{align*}
\vp&=-\frac{N}{\lm}+\frac{1}{\eta+q-1-\ti}+
p\frac{N(\eta-1)+\om\eta}{\lm\bra{p\eta+(N-\om\eta)(p-2)}}
+
\frac{\eta(N-q)}{(\eta+q-1-\ti)\bra{p\eta+(N-\om\eta)(p-2)}}.
\end{align*}
This  means that, as $j\to\infty$, we need to have $\vp\ge 0$ in order to have \eqref{dis222} fulfilled. This is equivalent to the condition
\[
-(\eta+q-1-\ti)(N-\om\eta)+\eta(N+p-q-\om(p-2))+N(p-2)\ge0,
\]
which leads to
\[
\eta(p-q)-N(q-p+1-\ti)+\om\eta(q-p+1-\ti+\eta)\ge0
\]
from which we deduce
\begin{equation}\label{sieta}
\si-\eta\le \frac{\om\eta}{p-q}(q-p+1-\ti+\eta),
\end{equation}
thanks to the definition of $\si$.\\
Since one can choose $\om$ sufficiently small such that \eqref{sieta} is violated, then one gains a contradiction which shows that \eqref{solne1} can not be fulfilled.

\section{Comments on the superlinear thresholds and data}
\label{appendix}

We want to describe the connection between the growth assumptions on gradient term and the behavior at infinity of the function $g $. Hence, we look for the thresholds which discriminate the super/sublinear behavior of the gradient term with respect to the parameters $q$ and $\ti$. Moreover, we put in evidence the link between $q$, $\ti$ and the Lebesgue space where the initial datum $u_0$ has to be taken in order to have an existence result when the superlinear trend occurs. We remark that we have a regularizing effect on the superlinear threshold, in the sense that our threshold is greater than the one related to the case $\ti=0$. Roughly speaking, this means that we are "sublinear" for more values of $q$ with respect to the case $\ti=0$. Another regularizing effect concerns the initial datum $u_0$, which is allowed to belong to larger Lebesgue spaces with respect to the case $\ti=0$.\\
Finally, we also determine the minimal  regularity needed on source terms. In this case, we analyze the connection among the function $h $ at infinity and $f$ in the superlinear setting.\\
It is worth to point out that, also in  these cases, we are able to weaken the regularity on $f$, exploiting the degeneracy of $h $ at infinity.

\subsection{The superlinear threshold and the initial datum}\label{st}

Let us consider the following parabolic Cauchy-Dirichlet problem:
\begin{equation}\label{eqf}
\begin{cases}
\ds u_t-\D_p u=\frac{F}{(1+u)^\ti}  & \t{in }\,\,Q_T,\\
u\ge0&\t{in }\,\,Q_T,\\
 u=0  &\t{on }\,\,(0,T)\times\partial \Omega,\\
 u(0,x)=0  &\t{in } \,\,\Omega.
\end{cases}
\end{equation}
We suppose that the forcing term satisfies
\begin{equation*}
0\le F\in L^k(Q_T), 
\end{equation*}
$k$ to be fixed.\\

In order to highlight the roles of parameters involved, it is without loss of generality that we assume a zero initial datum $u_0$, and avoid the singular term keeping memory of the $\ti$ power as we do on the right-hand of \eqref{eqf}.

We briefly explain what we want to prove: our aim is obtaining an inequality of the type
\[
\||\N u|^\ell\|_{L^1(Q_t)}\le c\|F\|_{L^k(Q_t)}^\rho\q\forall t\le T,
\]
for some $\ell,\,\rho$, so that, letting $F=|\N u|^q$, this reads
\[
\||\N u|^\ell\|_{L^1(Q_t)}\le c\||\N u|^q\|_{L^k(Q_t)}^\rho=c\||\N u|^\ell\|_{L^{\frac{kq}{\ell}}(Q_t)}^{\frac{\rho q}{\ell}}\q\forall t\le T.
\]
First of all, we need to formally impose $kq\le \ell$ in order to close the estimate. Furthermore, being interested in a superlinear growth in the gradient term, we look for the superlinear threshold given by $\rho/\ell$, hence we ask for $\ell<\rho q$.\\

We formally multiply \eqref{eqf} by $\left( (1+u)^{\si-1}-1 \right)$, with $\si\in (1,2)$ to be fixed, and we set $\ds v=(1+u)^{\frac{\si+p-2}{p}}$. We consider $t\le T$. In what follows, we assume that $\ti<\si-1$ since, on the contrary, the problem would loose its superlinear trend. \\
Applying H\"older's inequality with indices $(k,k')$, we get
\begin{equation}\label{ds4}
\int_{\Omega} (v(t))^{\frac{p\si}{\si+p-2}} +\iint_{Q_t}|\N v|^p  \le c\|F\|_{L^{k}(Q_t)}\|v\|_{L^{k'\frac{p(\si-1-\ti)}{\si+p-2}}(Q_t)}^{\frac{p(\si-1-\ti)}{\si+p-2}}+c.
\end{equation}

If $\frac{2N}{N+\si}<p<N$, then $\frac{p\si}{\si+p-2}<p^*$ and hence we can apply Theorem \ref{teoGN} with $h=\frac{p\si}{\si+p-2}$, $\eta=p$ and $a=b$, gaining the regularity $v\in L^{p\frac{N+\frac{p\si}{\si+p-2}}{N}}(Q_t)$. 
We proceed requiring  $k'(\si-1-\ti)\frac{p}{\si+p-2}=p\frac{N+\frac{p\si}{\si+p-2}}{N}$, and thus the value of $\si$ is given by the following expression:
\begin{equation}\label{nua}
\si=\si(k)=N\frac{k(p+\ti-1)-(p-2)}{N-p(k-1)},\q\t{i.e.} \q k=k(\si)=\frac{N(\si+p-2)+p\si}{N(p+\ti-1)+p\si}.
\end{equation}
Note that the value of $k$ in \eqref{nua} is strictly greater than $1$, because we have assumed $\ti<\si-1$. 
Moreover, the inequality \eqref{disGN=} with \eqref{ds4} implies that
\begin{equation*}
\|v\|_{L^{p\frac{N+\frac{p\si}{\si+p-2}}{N}}(Q_t)}^{p\frac{N+\frac{p\si}{\si+p-2}}{N}}\le 
c\|F\|_{L^k(Q_t)}^{\frac{N+p}{N}}
\|v\|_{L^{p\frac{N+\frac{p\si}{\si+p-2}}{N}}(Q_t)}^{p\frac{N+\frac{p\si}{\si+p-2}}{N}\frac{N+p}{k'N}}+c,
\end{equation*}
and thus, being $\frac{N+p}{k'N}<1$ by the the definition on $k$ and the lower bound on $p$, we can apply Young's inequality, 
getting
\begin{equation}\label{ds2}
\|v\|_{L^{p\frac{N+\frac{p\si}{\si+p-2}}{N}}(Q_t)}^{p\frac{N+\frac{p\si}{\si+p-2}}{N}}\le c\|F\|_{L^k(Q_t)}^{\frac{k(N+p)}{N-p(k-1)}}+c.
\end{equation}

Now, let us focus on the gradient term: consider the integral   of $|\N u|^\ell$, $\ell$ to be chosen. Then we estimate as
\begin{equation}\label{ds3}
\begin{split}
\iint_{Q_t}|\N u|^\ell  &\le \left( \iint_{Q_t}\frac{|\N u|^p}{(1+u)^{2-\si}}   \right)^{\frac{\ell}{p}} \left(\iint_{Q_t} (1+u)^{\frac{\ell(2-\si)}{p-\ell}}  \right)^{\frac{p-\ell}{p}}\le c \left( \iint_{Q_t}|\N v|^p   \right)^{\frac{\ell}{p}} \left(\iint_{Q_t} v^{\frac{\ell(2-\si)}{p-\ell}\frac{p}{\si+p-2}}  \right)^{\frac{p-\ell}{p}}.
\end{split}
\end{equation}
Requiring $\ds \frac{\ell(2-\si)}{p-\ell}\frac{p}{\si+p-2} =p\frac{N+\frac{p\si}{\si+p-2}}{N} $, we find the following expressions of $\ell$:
\begin{equation}\label{bnu}
\ell=\ell(\si)=\frac{N(\si+p-2)+\si p}{N+\si},\q\t{i.e.}\q \ell=\ell(k)=k\frac{p(N+1+\ti)-N(1-\ti)}{N-k(1-\ti)+2}.
\end{equation}
In particular
\begin{equation}\label{kb}
k=k(\ell)=\ell\frac{N+2}{p(N+1+\ti)-(N-\ell)(1-\ti)}.
\end{equation}
We continue requiring that  $k<\left(p\frac{N+2}{N(1-\ti)}\right)'$, in order to have $\ell<p$.
\\
Thanks to \eqref{ds4}, we estimate \eqref{ds3} as
\begin{align*}
\| |\N u|^\ell\|_{L^1(Q_t)}&\le c\norm{F}_{L^k(Q_t)}^\frac{\ell}{p}\norm{v}_{L^{p\frac{N+\frac{p\si}{\si+p-2}}{N}}(Q_t)}^{\ell\frac{N+\frac{p\si}{\si+p-2}}{k'N} +(p-\ell)\frac{N+\frac{p\si}{\si+p-2}}{N}}+c
&\le c\norm{F}_{L^k(Q_t)}^{\frac{k(N+p-\ell)}{N-p(k-1)}}+c
&=c\|F\|_{L^k(Q_t)}^{\frac{\ell(N+2)}{p(N+1+\ti)-N(1-\ti)}}+c,
\end{align*}
thanks also to \eqref{ds2} and the values of $\si=\si(k)$ in \eqref{nua} and $k=k(\ell)$ in \eqref{kb}. We let $F=|\N u|^q$. Thus, our last estimate becomes
\begin{equation*}
\| |\N u|^\ell\|_{L^1(Q_t)}\le c\||\N u|^\ell\|_{L^{\frac{kq}{\ell}}(Q_t)}^{\frac{q}{\ell}\frac{\ell(N+2)}{p(N+1+\ti)-N(1-\ti)}}+c.
\end{equation*} 

The first thing we have to check is the relation between the exponents of the Lebesgue spaces  which allow us to close the estimate above.
Then, we ask  for
\[
kq\le \ell.
\]
We observe that the inequality $kq\le \ell$, \eqref{nua} and \eqref{bnu} determine where the initial datum has to be taken, i.e. $u_0\in L^\si(\Omega)$ where $\si=\si(p,q,\ti,N)$ satisfies
\begin{equation*} 
\si\ge \frac{N(q-p+1-\ti)}{p-q}.
\end{equation*}
In particular, the sharp Lebesgue space of the initial data is $L^\si(\Omega)$ where $\ds\si=\frac{N(q-p+1-\ti)}{p-q}$.\\
 
We conclude this case by requiring
\[
1<\frac{q}{\ell}\frac{\ell(N+2)}{p(N+1+\ti)-N(1-\ti)},
\]
which provides us with the superlinear threshold
\begin{equation*} 
q>\frac{N(p-1+\ti)+p(1+\ti)}{N+2}.
\end{equation*}

\medskip
 
If $1<p\le \frac{2N}{N+\si}$, then $\frac{p\si}{\si+p-2}\ge p^*$ and thus we cannot apply Theorem \ref{teoGN}, as done in \eqref{ds2}. \\
However, we know that, at least, we can impose $\ds k'(\si-1-\ti)=\si$ in \eqref{ds4} (i.e. $ k={\si}/{(1+\ti)}$). Inequality \eqref{ds4} becomes
\begin{equation}\label{bla}
\int_{\Omega} (v(t))^{\frac{p\si}{\si+p-2}} +\iint_{Q_t}|\N v|^p  \le c\|F\|_{L^{k}(Q_t)}\|v\|_{L^{\frac{p\si}{\si+p-2}}(Q_t)}^{\frac{1}{k'}\frac{p\si}{\si+p-2}}+c.
\end{equation}
We then apply Young's inequality to the right-hand above with $(k,k')$, and recover
\begin{equation}\label{f}
\|v\|_{L^{\frac{p\si}{\si+p-2}}(Q_t)}^{\frac{p\si}{\si+p-2}}\le c\|F\|_{L^{k}(Q_t)}^k+c.
\end{equation}
We set $\ds \frac{\ell(2-\si)}{p-\ell}={\si}$ in \eqref{ds3} (i.e. $ \ell={\si p}/{2}$), from which
\begin{align*}
\| |\N u|^\ell\|_{L^1(Q_t)}&\le
c\norm{F}_{L^k(Q_t)}^\frac{\ell}{p}\norm{v}_{L^{\frac{p\si}{\si+p-2}}(Q_t)}^{\frac{\ell}{k'}\frac{\si}{\si+p-2}+\frac{p\si}{\si+p-2}\frac{p-\ell}{p}}+c
\le c\norm{F}_{L^k(Q_t)}^k+c,
\end{align*}
thanks also to \eqref{bla} and \eqref{f}. 
The values of $\ell,\,k$ imply that 
\[
\| |\N u|^\ell\|_{L^1(Q_t)}=\| |\N u|^\frac{\si p}{2}\|_{L^1(Q_t)}\le c\|F\|_{L^{\frac{\si}{1+\ti}}(Q_t)}^{\frac{\si}{1+\ti}}+c
\]
which, for $F=|\N u|^q$, becomes
\[
\| |\N u|^\frac{\si p}{2}\|_{L^1(Q_t)}\le c\| |\N u|^q\|_{L^{\frac{\si}{1+\ti}}(Q_t)}^{\frac{\si}{1+\ti}}+c=
c\| |\N u|^\frac{\si p}{2}\|_{L^{\frac{2q}{p(1+\ti)}}(Q_t)}^{\frac{2q}{p(1+\ti)}}+c.
\]
A suitable Lebesgue spaces inclusion occurs whenever 
\[
q\le\frac{p(1+\ti)}{2}.
\]
It is important to underline that this estimate holds for every value of $p$.\\

It is worth to point out that,  if $\si=\frac{N(q-p+1-\ti)}{p-q}$, then it holds that
\[
p>\frac{2N}{N+\si}\q\Longleftrightarrow\q q>\frac{p(1+\ti)}{2}.
\]
This means that, as $p\le \frac{2N}{N+\si}$, we fall within the case of sublinear gradient growth.
Thus, the superlinear threshold is given by
\begin{equation*} 
\max\set{\frac{p(1+\ti)}{2},\frac{N(p-1+\ti)+p(1+\ti)}{N+2}}=\begin{cases}
\ds\frac{p(1+\ti)}{2}&\t{if}\q 1<p<2;\\
\ds\frac{N(p-1+\ti)+p(1+\ti)}{N+2}&\t{if}\q p\ge 2.
\end{cases}
\end{equation*}

\subsection{On the forcing term}

We now wonder which is the regularity on the forcing term we have to ask for  when $p>\frac{2N}{N+\si}$. Then, we let $0\le F\in L^{r}(0,T;L^{m}(\Omega))$, and look for the curve where the exponents $(m,r)$ need to be taken in order to have an existence result. We thus consider 
\begin{equation*} 
\begin{cases}
\ds u_t-\D_p u=\frac{F}{(1+u)^\gi}  & \t{in }\,\,Q_T,\\
 u=0  &\t{on }\,\,(0,T)\times\partial \Omega,\\ u(0,x)=0  &\t{in } \,\,\Omega,
\end{cases}
\end{equation*}
Let  $\gi<\si-1$. 
We recall the inequality in \eqref{ds4} where $\ds v=(1+u)^{\frac{\si+p-2}{p}}$ and, thanks to two applications of H\"older's inequality with exponents $(m,m')$ and $(r,r')$ on the right-hand, we get
\begin{equation*} 
\begin{split}
\int_{\Omega} (v(t))^{\frac{p\si}{\si+p-2}} +\iint_{Q_t}|\N v|^p  &\le c\|F\|_{L^{r}(0,T;L^{m}(\Omega))}\|v\|_{L^{\overline{r}}(0,T;L^{\overline{m}}(\Omega))}^{\frac{p(\si-1)}{\si+p-2}}+c
\end{split}
\end{equation*}
where
\[
\overline{r}=r'(\si-1-\gi)\frac{p}{\si+p-2}\qq\t{and}\qq \overline{m}=m'(\si-1-\gi)\frac{p}{\si+p-2}.
\]
Note that here we need to have $\gi<\si-1$.\\
We apply again Theorem \ref{teoGN} for $h=\frac{p\si}{\si+p-2}$ and $\eta=p$ but we now focus on the case $a\ne b$. Then, we have
\begin{equation*} 
\int_0^t \|v(s)\|_{L^{b}(\Omega)}^{a} \le c\|v\|_{L^\infty(0,t;L^{\frac{p\si}{\si+p-2}}(\Omega))}^{a-p}\int_0^t\|\N v(s)\|_{L^p(\Omega)}^p ,
\end{equation*}
where $(a,b)$ satisfy the relation
\begin{equation}\label{gn}
\frac{N\frac{p\si}{\si+p-2}}{b}+\frac{N(p-\frac{p\si}{\si+p-2})+p\frac{p\si}{\si+p-2}}{a}=N.
\end{equation}
Observe that, if $r\ne m$, then $a\ne b$ and vice versa.\\
We proceed requiring $b\ge\overline{m}$ and $a\ge\overline{r}$ which, by \eqref{gn}, gives us
\begin{equation*}
\frac{N\si}{m}+\frac{N(p-2)+p\si}{r}\le N(p-1+\gi)+ p\si
\end{equation*}
otherwise, we have found the curve of the values $(m,r)$ in term of the summability of the initial datum $u_0\in L^\si(\Omega)$.

Finally, if $\gi\ge\si-1$,  the power $(1+u)^\gi$ is strong enough in order to weaken the regularity assumptions on $f$ (not on $u_0$) and the estimate is closed with just $L^1(Q_T)$ source data.

\subsection{Conclusion}

We sketch the thresholds of $q$ just found below.
\begin{center}
\begin{tabular}{rl}
\begin{tikzpicture}
\draw [very thick, dashed,color=red!70!black] (-1,0) to (2,0);
\end{tikzpicture}  
&   $\si\ge2$
\\
\begin{tikzpicture}
\draw [very thick, dotted,color=orange] (-1,0) to (2,0);
\end{tikzpicture}  
&   $1<\si<2$
\\
\begin{tikzpicture}
\draw [very thick, color=yellow] (-1,0) to (2,0);
\end{tikzpicture}
& $\si<1$
\end{tabular}
\end{center}
We set
\begin{align*}
q_1=\frac{N(p-1+\ti)+p(1+\ti)}{N+2},\qq
q_2=\frac{p(1+\ti)}{2}
.
\end{align*}

The red zone below regards values of $q$ belonging to the interval $\bigl[p-\frac{N(1-\ti)}{N+2},p\bigr)$ which implies that $\si\ge 2$: this means that we expect to have solutions with finite energy. In the orange zone, we have that  $\si\in (1,2)$. Finally, the last yellow range implies that $0<\si\le 1$,  it cannot be considered as Lebesgue exponent. The superlinear threshold is given by $q_1$.
\begin{figure}[H]
\centering
\begin{tikzpicture}
\draw [thin] (0,0) to (3.25,0);
\draw [->,thin] (10,0) -- (11,0);
\draw [very thick,  dashed,color=red!70!black] (8,0) to (10,0);
\draw [very thick, dotted,color=orange] (6,0) to (8,0);
\draw [very thick, color=yellow] (3.25,0) to (6,0);
\fill (0,0) circle (2pt) node[below] 
{$0$};
\fill (1,0) circle (2pt) node[below] {$q_2$};

\draw (3.25,0) node[circle=2pt,
       rounded corners,
		draw=yellow,
		fill=white,
       thick,
		inner sep=1.5pt,minimum size=1pt,
       ](){};
\fill (3.25,0) circle (0pt) node[below]
{$q_1$};

\draw (6,0) node[circle=2pt,
       rounded corners,
		draw=yellow,
		fill=yellow,
       thick,
		inner sep=1.5pt,minimum size=1pt,
       ](){};
\fill (6,0) circle (0pt) node[below]
{$p-\frac{N(1-\ti)}{N+1}$};

\draw (8,0) node[circle=2pt,
       rounded corners,
		draw=red!70!black,
		fill=red!70!black,
       thick,
		inner sep=1.5pt,minimum size=1pt,
       ](){};
\fill (8,0) circle (0pt) node[below]
{$p-\frac{N(1-\ti)}{N+2}$};

\draw (10,0) node[circle=2pt,
       rounded corners,
		draw=red!70!black,
		fill=white,
       thick,
		inner sep=1.5pt,minimum size=1pt,
       ](){};
\fill (10,0) circle (0pt) node[below]
{$p$};

\end{tikzpicture}
\caption{The case $p\ge 2$}
\label{fig:1}
\end{figure}
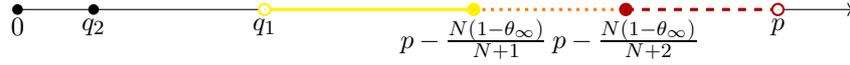

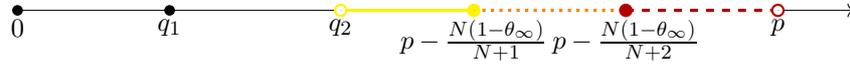
\begin{figure}[H]
\centering
\begin{tikzpicture}
\draw [thin] (0,0) to (4.25,0);
\draw [->,thin] (10,0) -- (11,0);
\draw [very thick,  dashed,color=red!70!black] (8,0) to (10,0);
\draw [very thick, dotted,color=orange] (6,0) to (8,0);
\draw [very thick, color=yellow] (4.25,0) to (6,0);
\fill (0,0) circle (2pt) node[below] 
{$0$};

\fill (2,0) circle (2pt) node[below] {$q_1$};

\draw (4.25,0) node[circle=2pt,
       rounded corners,
		draw=yellow,
		fill=white,
       thick,
		inner sep=1.5pt,minimum size=1pt,
       ](){};
\fill (4.25,0) circle (0pt) node[below]
{$q_2$};

\draw (6,0) node[circle=2pt,
       rounded corners,
		draw=yellow,
		fill=yellow,
       thick,
		inner sep=1.5pt,minimum size=1pt,
       ](){};
\fill (6,0) circle (0pt) node[below]
{$p-\frac{N(1-\ti)}{N+1}$};

\draw (8,0) node[circle=2pt,
       rounded corners,
		draw=red!70!black,
		fill=red!70!black,
       thick,
		inner sep=1.5pt,minimum size=1pt,
       ](){};
\fill (8,0) circle (0pt) node[below]
{$p-\frac{N(1-\ti)}{N+2}$};

\draw (10,0) node[circle=2pt,
       rounded corners,
		draw=red!70!black,
		fill=white,
       thick,
		inner sep=1.5pt,minimum size=1pt,
       ](){};
\fill (10,0) circle (0pt) node[below]
{$p$};
\end{tikzpicture}
\caption{The case $\frac{2N}{N+1}<p< 2$}
\label{fig:2}
\end{figure}
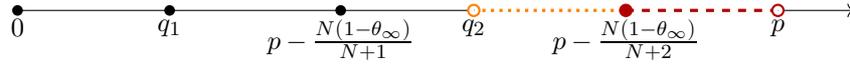
\begin{figure}[H]
\centering
\begin{tikzpicture}
\draw [thin] (0,0) to (6,0);
\draw [->,thin] (10,0) -- (11,0);
\draw [very thick,  dashed,color=red!70!black] (8,0) to (10,0);
\draw [very thick, dotted,color=orange] (6,0) to (8,0);
\fill (0,0) circle (2pt) node[below] 
{$0$};

\fill (2,0) circle (2pt) node[below] {$q_1$};
\fill (4.25,0) circle (2pt) node[below] {$p-\frac{N(1-\ti)}{N+1}$};

\draw (6,0) node[circle=2pt,
       rounded corners,
		draw=orange,
		fill=white,
       thick,
		inner sep=1.5pt,minimum size=1pt,
       ](){};
\fill (6,0) circle (0pt) node[below]
{$q_2$};

\draw (8,0) node[circle=2pt,
       rounded corners,
		draw=red!70!black,
		fill=red!70!black,
       thick,
		inner sep=1.5pt,minimum size=1pt,
       ](){};
\fill (8,0) circle (0pt) node[below]
{$p-\frac{N(1-\ti)}{N+2}$};

\draw (10,0) node[circle=2pt,
       rounded corners,
		draw=red!70!black,
		fill=white,
       thick,
		inner sep=1.5pt,minimum size=1pt,
       ](){};
\fill (10,0) circle (0pt) node[below]
{$p$};
\end{tikzpicture}
\caption{The case $\frac{2N}{N+2}<p\le \frac{2N}{N+1}$}
\label{fig:3}
\end{figure}
\begin{figure}[H]
\centering
\begin{tikzpicture}
\draw [thin] (0,0) to (8,0);
\draw [->,thin] (10,0) -- (11,0);
\draw [very thick,  dashed,color=red!70!black] (8,0) to (10,0);
\fill (0,0) circle (2pt) node[below] 
{$0$};
\fill (8,0) circle (2pt) node[below] {$q_2$};
\fill (2,0) circle (2pt) node[below] {$q_1$};
\fill (4.25,0) circle (2pt) node[below] {$p-\frac{N(1-\ti)}{N+1}$};
\fill (6.4,0) circle (2pt) node[below] {$p-\frac{N(1-\ti)}{N+2}$};
\draw (8,0) node[circle=2pt,
       rounded corners,
		draw=red!70!black,
		fill=white,
       thick,
		inner sep=1.5pt,minimum size=1pt,
       ](){};
\fill (8,0) circle (0pt) node[below]
{$q_2$};

\draw (10,0) node[circle=2pt,
       rounded corners,
		draw=red!70!black,
		fill=white,
       thick,
		inner sep=1.5pt,minimum size=1pt,
       ](){};
\fill (10,0) circle (0pt) node[below]
{$p$};
\end{tikzpicture}
\caption{The case $\frac{2N}{N+\si}<p\le \frac{2N}{N+2}$}
\label{fig:4}
\end{figure}
Figures \ref{fig:2} and \ref{fig:3} show how the superlinear threshold $q_2$ and the ''energy'' data threshold $p-\frac{N(1-\ti)}{N+2}$ are placed.\\

We conclude highlighting the gain of regularity  with respect to the case $\ti=\gi=0$ as far as the source term $f$ is concerned. Here, $\si_0=\si\bigr|_{\ti=0}$,
\begin{align*}
x_{\ti,\gi}=\frac{p-1+\gi}{\si}+\frac{p}{N},\q y_{\ti,\gi}=\pare{\frac{p-1+\gi}{\si}+\frac{p}{N}}\frac{N\si}{N(p-2)+p\si}, \\
x_0=x_{\ti,\gi}\bigr|_{\ti=\gi=0}=\frac{p-1}{\si_0}+\frac{p}{N},\q y_0=y_{\ti,\gi}\bigr|_{\ti=\gi=0}=\pare{\frac{p-1}{\si_0}+\frac{p}{N}}\frac{N\si_0}{N(p-2)+p\si_0}
\end{align*}
\begin{tabular}{cc}
\begin{minipage}{.4\textwidth}

\begin{figure}[H]
\centering
\scalebox{1}{\begin{tikzpicture} 
 
\fill [domain=0:3, color=red!70!black, 
					variable=\x](0,0)
					-- plot ({\x}, {3-\x})
					-- (3,0)
					-- cycle;
\draw[domain=0:2,smooth,very thick,variable=\x,red!70!black] (0,0) plot ({\x}, {2-\x});

\fill [domain=0:2.5, color=blue!70, pattern=north east lines, pattern color=blue!70, 
					variable=\x](0,0)
					-- plot ({\x}, {2.5-\x})
					-- (2.5,0)
					-- cycle;
\draw[domain=0:2.5,smooth,very thick,variable=\x,red!70!black] (0,0) plot ({\x}, {2.5-\x});

\draw[->] (0,0) -- (3.5,0) ;
\draw  (3.7,.5) node[anchor=north west] {$\frac{1}{m}$};
\draw[->] (0,0) -- (0,3.8) ;
\draw   (0,4) node[anchor=north east] {$\frac{1}{r}$};  

\fill (0,3) circle (1.5pt);
\fill (0,3.2) circle (0pt) node[anchor=north east] {$y_{\ti,\gi}$};

\fill (3,0) circle (1.5pt);
\fill (3.2,0) circle (0pt) node[below] {$x_{\ti,\gi}$};

\fill (0,2.5) circle (1.5pt) node[anchor=north east] {$y_0$};

\fill (2.5,0) circle (1.5pt);
\fill (2.3,0) circle (0pt) node[below] {$x_0$};
\end{tikzpicture}
}
\end{figure}
\end{minipage}
&
\begin{minipage}[c]{.6\textwidth}

\begin{tabular}{cc}
\begin{minipage}[c]{.1\textwidth}
\hspace*{-.5cm}
\begin{tikzpicture}[scale=.5]
\fill [domain=0:1, color=red!70!black, 
					variable=\x](0,0)
					-- plot ({\x}, {1-\x})
					-- (1,0)
					-- cycle;
\draw[domain=0:1,smooth,very thick,variable=\x,red!70!black] (0,0) plot ({\x}, {1-\x});
\end{tikzpicture}
\end{minipage}
&\hspace*{-1cm}
\begin{minipage}[c]{.9\textwidth}
admissible values of $\pare{\frac{1}{m},\frac{1}{r}}$ for $\ti,\,\gi\ne 0$
\end{minipage}
\\
\vspace*{.2cm}
&
\\
\begin{minipage}[c]{.1\textwidth}
\hspace*{-.5cm}
\begin{tikzpicture}[scale=.5]
\fill [domain=0:1, color=blue!70, pattern=north east lines, pattern color=blue!70, 
					variable=\x](0,0)
					-- plot ({\x}, {1-\x})
					-- (1,0)
					-- cycle;

\end{tikzpicture}
\end{minipage}
&\hspace*{-1cm}
\begin{minipage}[c]{.9\textwidth}
admissible values of $\pare{\frac{1}{m},\frac{1}{r}}$ for $\ti,\,\gi= 0$
\end{minipage}
\end{tabular}

\end{minipage}
\end{tabular}

\end{document}